\documentclass[12pt,reqno]{amsart}

\usepackage[utf8]{inputenc}

\usepackage{amsmath} 
\usepackage{amsthm} 
\usepackage{amssymb} 
\usepackage{amscd} 

\usepackage{enumitem} 
\usepackage{mathtools} 
\usepackage{mathrsfs} 
\usepackage{hyperref} 
\hypersetup{
    linktocpage = true,
    colorlinks=true, 
    linkcolor=red,
}


\newtheorem{theorem}{Theorem}[section]
\newtheorem*{theorem*}{Theorem}
\newtheorem{proposition}[theorem]{Proposition}
\newtheorem{lemma}[theorem]{Lemma}

\theoremstyle{definition}

\theoremstyle{remark}
\newtheorem{remark}[theorem]{Remark}
\newtheorem*{remark*}{Remark}


\newcommand{\con}[1]{\mathbb{#1}}
\newcommand{\R}{\con{R}} 
\newcommand{\norm}[1]{\left | \left |{#1} \right | \right |}
\newcommand{\seminorm}[1]{\left [ {#1} \right ] }

\newcommand{\fraclaplacian}{(-\Delta)^s}
\renewcommand{\d}{\,\mathrm{d}} 
\newcommand{\dx}{\,\mathrm{d}x} 
\newcommand{\bpar}[1]{\left ( {#1}\right )}
\newcommand{\setcond}[2]{\left \{ #1 \ : \ #2  \right \}}
\newcommand\beqc[1]{\left\{\begin{array}{#1}}
\newcommand\eeqc{\end{array} \right.}
\def\PDEsystem{rcll}
\def\bmatrix{\begin{pmatrix}}
\def\ematrix{\end{pmatrix}}

\DeclareMathOperator{\tr}{tr}

\DeclareMathOperator{\dist}{dist}

\DeclareMathOperator{\PV}{P.V.}
\let\div\relax
\DeclareMathOperator{\div}{div}

\setlength{\textwidth}{155mm}
\setlength{\textheight}{210mm}
\oddsidemargin=5mm
\evensidemargin=5mm

\numberwithin{equation}{section}

\hyphenation{auto-ma-ti-cally}


\title[Radial stable solutions to fractional semilinear elliptic equations]{Regularity of radial stable solutions to semilinear elliptic equations for the fractional Laplacian}
\author{Tomás Sanz-Perela}
\address{T. Sanz-Perela:
Universitat Polit\`ecnica de Catalunya and BGSMath, Departament de Matem\`{a}tiques, Diagonal 647, 08028 Barcelona, Spain}

\email{tomas.sanz@upc.edu}

\thanks{The author is supported by MINECO grants MDM-2014-0445 and MTM2014-52402-C3-1-P. He is member of the Barcelona Graduate School of Mathematics and part of the Catalan research group 2014 SGR 1083. }

\keywords{Fractional Laplacian, extremal solution, Dirichlet problem, stable solutions}

\begin{document}

\begin{abstract}
We study the regularity of stable solutions to the 
problem
\[
\left\{
\begin{array}{rcll}
(-\Delta)^s u &=& f(u) & \text{in} \quad B_1\,, \\
u &\equiv&0 & \text{in} \quad \mathbb \R^n\setminus B_1\,,
\end{array} \right.
\]
where $s\in(0,1)$. Our main result establishes an $L^\infty$ bound for stable and radially decreasing $H^s$ solutions to this problem
in dimensions $2 \leq n < 2(s+2+\sqrt{2(s+1)})$. In particular, 
this estimate holds for all $s\in(0,1)$ in dimensions $2 \leq n\leq 6$. It applies to all nonlinearities $f\in C^2$. 

For such parameters $s$ and $n$, our result leads to the regularity of the extremal solution
when $f$ is replaced by $\lambda f$ with $\lambda > 0$.
This is a widely studied question for $s=1$, which is still largely open in the nonradial case both for $s=1$ and $s<1$.
\end{abstract}

\maketitle

\section{Introduction}

This paper is devoted to the study of the regularity of stable solutions to the semilinear problem
\begin{equation}
\label{Eq:SemilinearProblemB1}
\beqc{\PDEsystem}
    \fraclaplacian u &= & f(u) & \text{in} \quad  B_1\,,\\
    u &= &0 & \text{in} \quad  \R^n\setminus B_1\,,
\eeqc
\end{equation} 
where $B_1$ is the unit ball in $\R^n$ and $f$ is a $C^2$ function. The operator $ \fraclaplacian$ is the fractional Laplacian, defined for $s\in(0,1)$ by
$$
\fraclaplacian u (x):= c_{n,s}  \PV \int_{\R^n} \dfrac{u(x) - u(z)}{|x-z|^{n+2s}} \d z\,,
$$
where $c_{n,s}>0$ is a normalizing constant depending only on $n$ and $s$ and $\PV$ stands for principal value.

Our results are motivated by the following problem, a variation of \eqref{Eq:SemilinearProblemB1}:
\begin{equation}
\label{Eq:ProblemLambda}
    \beqc{\PDEsystem}
    \fraclaplacian u &= &\lambda f(u) &  \text{in} \quad \Omega \,,\\
    u &= &0 &  \text{in} \quad   \R^n\setminus\Omega\,,
    \eeqc
\end{equation}
where $\Omega \subset \R^n$ is a bounded smooth domain, $\lambda > 0$ is a real parameter and the function $f:[0,\infty) \longrightarrow \R$ satisfies
\begin{equation}
\label{Eq:Conditionsf}
f\in C^1([0,\infty))\,, \ f\text{ is nondecreasing, } f(0)>0, \text{ and } \lim_{t \rightarrow +\infty} \dfrac{f(t)}{t} = + \infty \,.
\end{equation}
In this article we study \eqref{Eq:ProblemLambda} when $\Omega = B_1$.

It is well known (see \cite{RosOtonSerra-Extremal}) that, for $f$ satisfying \eqref{Eq:Conditionsf}, there exists a finite extremal parameter $\lambda^*$ such that, if  $0<\lambda<\lambda^*$ then problem \eqref{Eq:ProblemLambda} admits a minimal classical solution $u_{\lambda}$ which is stable ---see \eqref{Eq:StabilityCondition} below---, while for $\lambda > \lambda^*$ it has no solution, even in the weak sense. 
The family $\setcond{u_\lambda}{0<\lambda<\lambda^*}$ is increasing in $\lambda$ and its pointwise limit when $\lambda \nearrow \lambda^*$ is a weak solution of \eqref{Eq:ProblemLambda} with $\lambda=\lambda^*$. Such solution, denoted by $u^*$, is called  \emph{extremal solution} of \eqref{Eq:ProblemLambda}. 
As in \cite{RosOtonSerra-Extremal}, we say that $u$ is a weak solution of \eqref{Eq:ProblemLambda} when $ u \in L^1(\Omega)$, $f(u)\delta^s \in L^1(\Omega)$, where $\delta(x) = \dist (x, \partial \Omega)$, and
\begin{equation}
\label{Eq:DefinitionWeakSolution}
\int_\Omega u \fraclaplacian \zeta \d x = \int_\Omega \lambda f(u) \zeta \d x
\end{equation}
for all $\zeta$ such that $\zeta$ and $\fraclaplacian \zeta$ are bounded in $\Omega$ and $\zeta \equiv 0$ on $\partial \Omega$.

In the nineties, H.~Brezis and J.L.~Vázquez \cite{BrezisVazquez} raised the question of determining the regularity of $u^*$ depending on the dimension $n$ for the local version ($s=1$) of \eqref{Eq:ProblemLambda} ---see also the open problems raised by H.~Brezis in \cite{BrezisIFT}. This is equivalent to determine whether $u^*$ is bounded or unbounded. There are several results in this direction for the classical problem (see Remark~\ref{Remark:KnownResultsLaplacian} for more details and also the monograph~\cite{Dupaigne}). 

Regarding the problem for the fractional Laplacian, there are fewer results concerning the regularity of stable solutions and in particular of the extremal solution of \eqref{Eq:ProblemLambda}. This problem was first studied for the fractional Laplacian by X.~Ros-Oton and J.~Serra in \cite{RosOtonSerra-Extremal}. There, the authors proved the existence of the family of minimal and stable solutions $u_\lambda$, as well as the existence of the extremal solution $u^*$. They also showed that if $f$ is convex then $u^*$ is bounded whenever $n < 4s$, and that if $f$ is $C^2$ and $f f'' /(f')^2$ has a limit at infinity, the same happens if $n <10s$ (see Remark~\ref{Remark:KnownResultsLaplacian} for more comments on this). Later, X.~Ros-Oton~\cite{RosOton-Gelfand} improved this result in the case of the exponential nonlinearity $f(u) = e^u$, showing that $u^*$ is bounded whenever $n \leq 7$ for all $s \in (0,1)$. More precisely, the condition involving $n$ and $s$ that he found is the following:
\begin{equation}
\label{Eq:SharpConditionNGelfandProblem}
\dfrac{\Gamma(\frac{n}{2})\Gamma(1+s)}{\Gamma(\frac{n - 2s}{2})} > \dfrac{\Gamma^2(\frac{n + 2s}{4})}{\Gamma^2(\frac{n - 2s}{4})}\,.
\end{equation}
In particular, for $s \gtrsim 0.63237\dots$, $u^*$ is bounded up to dimension $n = 9$. 
As explained in Remark 2.2 of \cite{RosOton-Gelfand}, condition \eqref{Eq:SharpConditionNGelfandProblem} is expected to be optimal, since if \eqref{Eq:SharpConditionNGelfandProblem} does not hold, then $\log |x|^{-2s}$ is a singular extremal solution of the problem $\fraclaplacian u = \lambda e^u$ in all $\R^n$. Nevertheless, this is still an open problem, since this last example is not our Dirichlet problem in a bounded domain.

To our knowledge, \cite{RosOtonSerra-Extremal,RosOton-Gelfand} are the only papers where problem \eqref{Eq:ProblemLambda} is studied. However,  the article by A.~Capella, J.~Dávila, L.~Dupaigne, and Y.~Sire \cite{CapellaEtAl} deals with a similar problem to \eqref{Eq:ProblemLambda} but for a different operator, the spectral fractional Laplacian $A^s$ defined via the Dirichlet eigenvalues and eigenfunctions of the Laplace operator. It studies the problem of the extremal solution for the operator $A^s$ in the unit ball and it establishes that, if $2 \leq n < 2(s+2+\sqrt{2(s+1)})$, then $u^* \in L^\infty (B_1)$. In particular, $u^*$ is bounded in dimensions $2 \leq n \leq 6$ for all $s\in (0,1)$. In the present work, we use similar ideas to the ones in \cite{CapellaEtAl} to study the same problem in $B_1$, but now with $A^s$ replaced by the fractional Laplacian. We obtain the same condition on $n$ and $s$ guaranteeing regularity of the extremal solution to \eqref{Eq:ProblemLambda}. Moreover, in the arguments of \cite{CapellaEtAl} there are two points where an estimate is missing and hence the result is not completely proved. In this paper we establish such estimate (given in Proposition~\ref{Prop:HorizontalGradientEstimateRing}) which is valid for the fractional Laplacian and also for the spectral fractional Laplacian. Hence, we complete the proofs of \cite{CapellaEtAl} (see the comment before Remark~\ref{Remark:KnownResultsLaplacian} and also Remarks~\ref{Remark:MissingEstimateCapellaEtAl} and \ref{Remark:MissingTermCapellaEtAl}).

The following is our main result, concerning the boundedness of the extremal solution.

\begin{theorem}
\label{Th:RadialMainTheoremExtremal}
Let $n\geq 2$, $s\in (0,1)$, and $f$ be a $C^2$ function satisfying \eqref{Eq:Conditionsf}. Let $u^*$ be the extremal solution of \eqref{Eq:ProblemLambda} with $\Omega = B_1$, the unit ball of $\R^n$. Then, $u^*$ is radially symmetric and decreasing in $B_1\setminus\{0\}$ and we have that:
\begin{enumerate}[label=(\roman*)]
\item If $n < 2\bpar{s+2+\sqrt{2(s+1)}}$, then $u^* \in L^{\infty}(B_1)$.
\item  If $n \geq 2\bpar{s+2+\sqrt{2(s+1)}}$, then for every $\mu > n/2 - s - 1 - \sqrt{n-1}$, it holds $u^*(x) \leq C |x|^{-\mu}$ in $B_1$ for some constant $C>0$.
\end{enumerate}
\end{theorem}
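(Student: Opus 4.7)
The proof falls into three parts: symmetry and monotonicity of $u^*$, a weighted integral bound on its radial derivative coming from the stability inequality, and the passage from that bound to the pointwise estimates (i) and (ii).

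First, I would establish that $u^*$ is radial and radially nonincreasing. For every $\lambda\in(0,\lambda^*)$ the minimal solution $u_\lambda$ is the unique bounded positive stable solution of \eqref{Eq:ProblemLambda}, so the rotational invariance of the problem in the ball forces it to be radial; the moving planes method for the fractional Laplacian, applied to positive classical solutions in $B_1$, then yields $\partial_r u_\lambda \leq 0$ in $B_1\setminus\{0\}$. Both properties pass to the monotone pointwise limit $u_\lambda \nearrow u^*$, so it suffices to prove the analogues of (i)--(ii) for $u_\lambda$ with constants uniform in $\lambda$ and let $\lambda\nearrow\lambda^*$.

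The core of the argument is a weighted $L^2$ estimate for $\partial_r u_\lambda$ obtained from the stability inequality. I would work with the Caffarelli--Silvestre extension: letting $\bar u_\lambda$ denote the $s$-harmonic extension of $u_\lambda$ to $\R^{n+1}_+$, the stability inequality takes the form
\[
\int_{B_1}\lambda f'(u_\lambda)\,\xi^2\dx \;\leq\; d_s \int_{\R^{n+1}_+} y^{1-2s}|\nabla\bar\xi|^2\dx\d y
\]
for every $\xi\in H^s(\R^n)$ vanishing outside $\overline{B_1}$, where $\bar\xi$ is its $s$-harmonic extension. The plan is to insert $\xi=\eta(|x|)\,(-\partial_r u_\lambda)$, with $\eta$ a radial cutoff behaving like $r^{-\mu}$ in the interior and tending to zero near $\partial B_1$, and to pair it with the linear equation satisfied in $\R^{n+1}_+$ by $\partial_r \bar u_\lambda$, obtained by differentiating the extension equation in the radial $x$-direction. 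The contribution involving $f'(u_\lambda)$ cancels, and after integration by parts one is left with a purely linear weighted expression containing a Hardy-type term in $|x|^{-2}$ together with surface contributions on the lateral boundary $\partial B_1\times\R_+$. These surface terms are exactly what Proposition~\ref{Prop:HorizontalGradientEstimateRing} controls, filling the gap of~\cite{CapellaEtAl}. Invoking the sharp weighted Hardy inequality in the half-space and optimizing in $\mu$ yields
\[
\int_0^{1/2} \rho^{1+2\mu}\bigl|\partial_\rho u_\lambda(\rho)\bigr|^2\,\d\rho \;\leq\; C \quad\text{whenever}\;\mu > \tfrac{n}{2}-s-1-\sqrt{n-1},
\]
with $C$ independent of $\lambda$. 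The algebraic threshold $n = 2\bpar{s+2+\sqrt{2(s+1)}}$ is exactly the dividing line between $\mu_*:=n/2-s-1-\sqrt{n-1}<0$ and $\mu_*\geq 0$.

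Finally, I would convert this $L^2$ bound into the pointwise estimates using radial monotonicity. Standard interior regularity gives a bound on $u_\lambda$ in $\{|x|\in[1/2,1]\}$ uniform in $\lambda$. For $r\in(0,1/2]$, the fundamental theorem of calculus applied to the radial profile yields $u_\lambda(r) = u_\lambda(1/2) + \int_r^{1/2}(-\partial_\rho u_\lambda(\rho))\,\d\rho$, and Cauchy--Schwarz with weights $\rho^{1+2\mu}$ and $\rho^{-1-2\mu}$ against the integral above gives
\[
u_\lambda(r) \;\leq\; u_\lambda(1/2) + C\left(\int_r^{1/2}\rho^{-1-2\mu}\d\rho\right)^{1/2}.
\]
When $\mu_* < 0$, i.e., $n<2\bpar{s+2+\sqrt{2(s+1)}}$, one may pick $\mu\in(\mu_*,0)$, making the last integral bounded as $r\to 0$, and (i) follows. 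When $\mu_*\geq 0$, one picks $\mu$ slightly above $\mu_*$ and the last integral behaves like $r^{-2\mu}$, producing $u_\lambda(r)\leq C r^{-\mu}$; letting $\lambda\nearrow\lambda^*$ gives (ii). I expect the main technical difficulty to lie in the treatment of the lateral boundary terms produced by the integration by parts in the extension and in the sharp algebraic optimization leading to the exponent $\mu_*$; both are addressed via Proposition~\ref{Prop:HorizontalGradientEstimateRing} and a judicious choice of the cutoff $\eta$.
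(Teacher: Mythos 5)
Your reduction to uniform estimates for the minimal solutions $u_\lambda$ (radial symmetry and monotonicity via moving planes for the positive bounded solutions, uniformity in $\lambda$ through the $L^1$ bound, monotone convergence as $\lambda\nearrow\lambda^*$) is correct and is exactly the paper's reduction. The problem lies in the central step. You claim that inserting $\xi=\eta(|x|)(-\partial_r u_\lambda)$ in the stability inequality and pairing with the equation for $\partial_r\bar u_\lambda$ yields a uniform \emph{trace} estimate $\int_0^{1/2}\rho^{1+2\mu}\,|\partial_\rho u_\lambda(\rho)|^2\,\d\rho\le C$ for $\mu>n/2-s-1-\sqrt{n-1}$, and you then conclude by the fundamental theorem of calculus along rays. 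This trace estimate does not follow from the argument you describe. When one runs the computation in the extension (test function $\rho^{1-\alpha}v_\rho\zeta$ in \eqref{Eq:StabilityConditionExtension}, combined with $\div(y^a\nabla v_\rho)=y^a\frac{n-1}{\rho^2}v_\rho$ and the boundary condition $d_s\partial_{\nu^a}v_\rho=f'(u)u_\rho$), the boundary terms $\int f'(u)u_\rho^2\,(\cdot)$ cancel \emph{exactly}, and what survives is a purely bulk quantity: this is the paper's Proposition~\ref{Prop:IntegrabilityHalfBall}, a bound on $\int_0^\infty\int_{B_{1/2}}y^a v_\rho^2\rho^{-2\alpha}\dx\d y$, weighted by $y^a$ in the half-space. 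There is no mechanism in this computation producing a weighted $L^2$ bound of $u_\rho$ on $\{y=0\}$, and no weighted trace inequality converts the bulk bound into one (the trace of an $H^1(\R^{n+1}_+,y^a)$ function lies only in $H^s$, with a loss of $s$ "derivatives"); for the local case $s=1$ the two objects coincide, which is why the Cabr\'e--Capella scheme you are transplanting works there but not here. Without that trace estimate, your final step (Cauchy--Schwarz against $\int_r^{1/2}\rho^{-1-2\mu}\d\rho$) has nothing to feed on. A secondary inaccuracy: for the Caffarelli--Silvestre extension there is no lateral boundary $\partial B_1\times\R_+$ (that is a feature of the spectral operator of \cite{CapellaEtAl}); Proposition~\ref{Prop:HorizontalGradientEstimateRing} controls $\nabla_x v$ on an \emph{interior} annular cylinder $(B_{3/4}\setminus\overline{B_{1/2}})\times(0,1)$, where the cut-off error terms actually live, not near $\partial B_1$, where $u_\rho\sim\delta^{s-1}$ is not even square integrable for $s\le 1/2$.

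The paper's route is designed precisely to avoid needing any trace bound on $u_\rho$. After establishing the bulk estimate of Proposition~\ref{Prop:IntegrabilityHalfBall}, it multiplies $\div(y^a\nabla v)=0$ by $(\rho^2+y^2+\varepsilon)^{-\beta/2}$ and integrates by parts over the cylinder $B_{1/2}\times(0,1)$, reading $f(u)$ as the flux $d_s\partial_{\nu^a}v$. The resulting term involving $y v_y$ is absorbed into the left-hand side thanks to Lemma~\ref{Lemma:Identityv_yMagicConstant}, whose whole point is that the constant $A_{n,s,\beta}$ is strictly less than $1$; the term involving $\rho v_\rho$ is handled by Cauchy--Schwarz against the bulk estimate, and this is where the restriction $n>2(\beta+s-\alpha)$ with $\alpha<1+\sqrt{n-1}$ enters, producing your threshold. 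This gives $\int_{B_1}f(u)\rho^{-\beta}\dx\le C$ with $\beta=n-2s$ in case (i) and $\beta<n-2s$ in case (ii), and the pointwise conclusions then come from the Riesz-kernel representation of Lemma~\ref{Lemma:Boundu(x)Dirichlet} (plus radial monotonicity for (ii)), not from integrating $u_\rho$ along rays. If you want to salvage your outline, you would have to either prove the weighted trace estimate independently (not available by these methods) or switch, as the paper does, to estimating $\int_{B_1}f(u)\rho^{-\beta}$ and invoking the representation formula.
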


As a consequence, $u^*$ is bounded for all $s\in (0,1)$ whenever $2 \leq n \leq 6$. The same holds if $n = 7$ and $s \gtrsim 0.050510\ldots$, if $n=8$ and $s \gtrsim 0.354248\ldots$, and if $n=9$ and $s \gtrsim 0.671572\ldots$. Note that the assumption in (i) never holds for $n\geq 10$. In the limit $s\uparrow 1$, the condition on $n$ in statement (i) corresponds to the optimal one for the local problem in the ball, that is $n<10$ ---see~\cite{CabreCapella-Radial}. Instead, for powers $s<1$, the hypothesis in (i) is not optimal: for the exponential nonlinearity $f(u) = e^u$ a better assumption is \eqref{Eq:SharpConditionNGelfandProblem} ---see~\cite{RosOton-Gelfand}.

Theorem~\ref{Th:RadialMainTheoremExtremal} is a consequence of the  stability of $u^*$. We say that a weak solution $u\in L^1(\Omega)$ of \eqref{Eq:ProblemLambda} is \emph{stable} if 
\begin{equation}
\label{Eq:StabilityCondition}
\lambda \int_\Omega f'(u) \xi^2 \dx \leq \seminorm{\xi}_{H^{s}(\R^n)}^2 := \int_{\R^n} |(-\Delta)^{s/2}\xi|^2 \dx 
\end{equation}
for all $\xi\in H^{s}(\R^n)$ such that $\xi \equiv 0$ on $\R^n \setminus\Omega$.
Note that the integral in the left-hand side of \eqref{Eq:StabilityCondition} is well defined if $f$ is nondecreasing, an assumption that we make throughout all the paper.

In case of problem \eqref{Eq:ProblemLambda}, all the solutions $u_\lambda$ with $\lambda < \lambda^*$, as well as the extremal solution, are stable. This property follows from their minimality.
When $u\in H^s(\R^n)$, stability is equivalent to the nonnegativeness of the second variation of the energy associated to \eqref{Eq:ProblemLambda} at $u$.

The proof of Theorem~\ref{Th:RadialMainTheoremExtremal} is based only on the stability of solutions. First, we show that bounded stable solutions are radially symmetric and monotone (see Section~\ref{Sec:RadialSymmetry}). Then, we use this, the stability condition and the equation to prove our estimates. This procedure is first applied to $u_\lambda$, with $\lambda < \lambda^*$, which are bounded stable solutions and thus regular enough, and we establish some estimates that are uniform in $\lambda < \lambda^*$. More precisely, they depend essentially on $\norm{u_\lambda}_{L^1(\R^n)}$, a quantity that can be bounded independently of $\lambda$ ---see Remark~\ref{Remark:UniformEstimatesInLambda} for more details about this fact. Once we have these uniform estimates, we can pass to the limit $\lambda \to \lambda^*$ and use monotone convergence to prove the result for $u^*$.

This result, Theorem~\ref{Th:RadialMainTheoremExtremal}, is a consequence of the following more general statement, which applies to the class of stable and radially decreasing $H^s$ weak solutions ---not necessarily bounded--- to \eqref{Eq:SemilinearProblemB1}. Recall that our notion of weak solution is given in \eqref{Eq:DefinitionWeakSolution}. Recall also (see Section~\ref{Sec:RadialSymmetry}) that positive bounded stable solutions to \eqref{Eq:SemilinearProblemB1} will be shown to be radially decreasing in $B_1$.

\begin{theorem}
	\label{Th:RadialMainTheorem}
	Let $n\geq 2$, $s\in (0,1)$, and $f$ be a $C^2$ nondecreasing function. Let $u \in H^s(\R^n)$ be a stable radially decreasing weak solution to \eqref{Eq:SemilinearProblemB1}. We have that:
	\begin{enumerate}[label=(\roman*)]
		\item If $n < 2\bpar{s+2+\sqrt{2(s+1)}}$, then $u\in L^{\infty}(B_1)$. Moreover,
		$$
		\norm{u}_{L^\infty (B_1)} \leq C
		$$
		for some constant $C$ that depends only on $n$, $s$, $f$ and
		 $\norm{u}_{L^1 (\R^n)}$.
		\item  If $n \geq 2\bpar{s+2+\sqrt{2(s+1)}}$, then for every $\mu > n/2 - s - 1 - \sqrt{n-1}$, it holds
		$$
		u(x) \leq C |x|^{-\mu} \quad \text{ in } B_1\,,
		$$ 
		for some constant $C$ that depends only on $n$, $s$, $\mu$, $f$ and
		$\norm{u}_{L^1 (\R^n)}$.
	\end{enumerate}
	
\end{theorem}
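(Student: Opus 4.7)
The plan is to adapt the Cabré--Capella scheme to the fractional setting via the Caffarelli--Silvestre extension: combining the stability inequality with the radially differentiated equation for $u$ yields a weighted $L^2$ estimate on the radial derivative $u_r$, which then integrates by the fundamental theorem of calculus to the pointwise bound. Since the constants are required to depend only on $n,s,f$ and $\norm{u}_{L^1(\R^n)}$, and since $u$ is a priori only $H^s$, I would first establish the estimates under the additional hypothesis that $u$ is bounded (hence smooth inside $B_1$ by interior regularity for the fractional Laplacian), and then recover the general case by approximating with the bounded minimal solutions $u_\lambda$ of the truncated problem \eqref{Eq:ProblemLambda} and passing to the limit via monotone convergence, as indicated after the statement of Theorem~\ref{Th:RadialMainTheoremExtremal}.

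Let $U:\R^{n+1}_+\to\R$ be the Caffarelli--Silvestre extension of $u$, solving $\div(y^a\nabla U)=0$ with $a=1-2s$, $U(\cdot,0)=u$, and $-d_s\lim_{y\to 0}y^a U_y=f(u)$ in $B_1$. Radial monotonicity of $u$ transfers to $U$, so $V:=\partial_r U\leq 0$ in $\R^{n+1}_+$, and $V$ satisfies
\[
\div(y^a\nabla V) \;=\; \frac{n-1}{r^2}\,y^a V \quad\text{in }\R^{n+1}_+,\qquad -d_s\lim_{y\to 0}y^a V_y \;=\; f'(u)\,u_r \quad\text{in }B_1\,.
\]
Multiplying the $V$-equation by $\eta(|x|)^2 V$ and integrating by parts on the upper half-space, and separately applying stability \eqref{Eq:StabilityCondition} to the admissible test function $\xi(x)=\eta(|x|)\,u_r(|x|)$ using $\eta V$ as a non-minimizing extension to $\R^{n+1}_+$, the matching $\int f'(u)(\eta u_r)^2$ contributions cancel, leaving the weighted Hardy-type inequality
\[
(n-1)\int_{\R^{n+1}_+}\frac{y^a\,\eta^2}{r^2}\,V^2\,\d x\,\d y \;\leq\; \int_{\R^{n+1}_+} y^a\,|\nabla_x\eta|^2\,V^2\,\d x\,\d y \;+\; (\text{cutoff/ring terms})\,,
\]
structurally identical to the one in the local Cabré--Capella argument. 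Choosing $\eta$ equal to $r^{-b}$ on an interior annulus with cutoffs near $r=0$ and $r=1$ that ensure $\xi\in H^s(\R^n)$ and vanishes outside $B_1$ (recall that $u_r$ behaves like $\delta^{s-1}$ as $r\to 1^-$), and absorbing the cutoff/ring contributions via Proposition~\ref{Prop:HorizontalGradientEstimateRing}, the algebraic balance forces the coefficient $b^2-(n-1)$ to be negative; hence the inequality yields $\int y^a r^{-2b-2}V^2\,\d x\,\d y\leq C$ for every $b<\sqrt{n-1}$. A weighted Hardy--trace inequality at $\{y=0\}$ then translates this into the $1$D radial bound $\int_0^1 \rho^{n-1-2s-2b}u_\rho^2\,d\rho\leq C$, with $C$ depending only on $n,s,f$ and $\norm{u}_{L^1(\R^n)}$.

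The pointwise bound now follows by Cauchy--Schwarz along the radius: since $u$ is radially decreasing and vanishes at $r=1$,
\[
u(r) \;=\; -\int_r^1 u_\rho(\rho)\,d\rho \;\leq\; \bpar{\int_r^1 \rho^{n-1-2s-2b}\,u_\rho^2\,d\rho}^{\!1/2}\bpar{\int_r^1 \rho^{2s+2b+1-n}\,d\rho}^{\!1/2}\,.
\]
The second integral remains bounded as $r\to 0$ exactly when $b>n/2-s-1$. Combined with the constraint $b<\sqrt{n-1}$, an admissible $b$ exists iff $(n/2-s-1)^2<n-1$, which rearranges to the dimension condition $n<2(s+2+\sqrt{2(s+1)})$ of statement (i); in that case $u\in L^\infty(B_1)$ with the claimed quantitative bound. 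When this condition fails, pushing $b$ up to $\sqrt{n-1}^-$ yields $u(r)\leq C r^{-\mu}$ with $\mu=n/2-s-1-b$ arbitrarily close to $n/2-s-1-\sqrt{n-1}$, which is statement (ii). The main technical obstacle in the whole scheme is the rigorous handling of the right-hand side of stability when tested against $\eta u_r$: because $u_r$ is only in a weighted $L^2$ space and $\eta V$ is not the $s$-harmonic extension of $\eta u_r$, the integration-by-parts identities produce cross terms supported on spherical annuli in $\R^{n+1}_+$ that are not directly controlled. These are precisely what Proposition~\ref{Prop:HorizontalGradientEstimateRing} is designed to absorb; this is the key new ingredient and fills the gap in~\cite{CapellaEtAl}.
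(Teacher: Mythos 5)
The first half of your outline (differentiating the extension equation in the horizontal radial direction, testing stability with $\rho$-power weights times $v_\rho$, and arriving at $\int_{\R^{n+1}_+} y^a \rho^{-2b-2} v_\rho^2 \d x \d y \leq C$ for every $b<\sqrt{n-1}$) is correct and is exactly the content of Proposition~\ref{Prop:IntegrabilityHalfBall}, including the role of Proposition~\ref{Prop:HorizontalGradientEstimateRing} in controlling the annulus terms. The gap comes right after: you invoke ``a weighted Hardy--trace inequality at $\{y=0\}$'' to convert this bulk estimate on $v_\rho$ into the one--dimensional bound $\int_0^1 \rho^{\,n-1-2s-2b} u_\rho^2 \d \rho \leq C$, and then conclude by the fundamental theorem of calculus as in Cabr\'e--Capella. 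No such inequality is available: a weighted $L^2(y^a)$ bound on $v_\rho$ alone does not control its trace $u_\rho$ (traces are not even defined on weighted $L^2$; one would need weighted control of $\nabla v_\rho$, i.e.\ second derivatives of $v$, or a Moser/Harnack-type argument with the Neumann datum $f'(u)u_\rho$, which requires $f'(u)$ bounded --- unavailable near the origin, which is precisely where the possible blow-up sits; this is why Proposition~\ref{Prop:HorizontalGradientEstimateRing} is confined to an annulus away from $0$). Moreover, the asserted $1$D bound is simply false when $s\leq 1/2$: since $u\sim \delta^s$ near $\partial B_1$, one has $u_\rho\sim \delta^{s-1}$ and the integral diverges at $\rho=1$ (this particular issue could be patched by stopping at $\rho=1/2$ and bounding $u(1/2)$ via $\norm{u}_{L^1}$, but the trace-transfer step itself remains unjustified). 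In the local case $s=1$ this step is trivial because the ``extension'' is $u$ itself; in the fractional case it is the heart of the difficulty, and your proposal does not supply an argument for it.

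The paper takes a different route around exactly this obstruction. Instead of estimating $u_\rho$ on the trace, it bounds $\norm{u}_{L^\infty}$ by $\int_{B_1} f(u)\,\rho^{2s-n}\d x$ via the representation formula (Lemma~\ref{Lemma:Boundu(x)Dirichlet}), and estimates $\int_{B_1} f(u)\rho^{-\beta}\d x$ by multiplying $\div(y^a\nabla v)=0$ by $r^{-\beta}$ and integrating by parts over the cylinder $B_{1/2}\times(0,1)$, reading $f(u)$ as the flux $d_s\,\partial_{\nu^a}v$. The resulting term $\int y^a r^{-\beta-2}\rho v_\rho$ is handled by Cauchy--Schwarz against the weighted bound on $v_\rho$ (this is where your first half plugs in), while the term $\int y^a r^{-\beta-2} y\, v_y$ is absorbed into the left-hand side thanks to Lemma~\ref{Lemma:Identityv_yMagicConstant}, whose whole point is the strict inequality $A_{n,s,\beta}<1$; the side boundary terms are controlled by Propositions~\ref{Prop:GradientEstimate} and \ref{Prop:HorizontalGradientEstimateRing}. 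Your outline contains no counterpart to this absorption step or to the representation-formula reduction, and without them (or a genuine proof of the trace inequality you postulate) the argument does not close, even though your final exponent bookkeeping ($b>n/2-s-1$, $b<\sqrt{n-1}$, hence $n<2(s+2+\sqrt{2(s+1)})$, and $\mu\to n/2-s-1-\sqrt{n-1}$ in case (ii)) matches the theorem.
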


A main tool used in the present article is the extension problem for the fractional Laplacian, due to L.~Caffarelli and L.~Silvestre \cite{CaffarelliSilvestre}. Namely,
for $s \in (0,1)$ and given a function $u:\R^n \rightarrow \R$, consider $v$ the solution of
 \begin{equation}
\label{Eq:ExtensionProblemDirichlet}
     \beqc{rcll}
    \div (y^a \nabla v ) &= & 0 & \text{in } \R^{n+1}_+ \,,\\
    v\, &= &u &  \text{on } \partial \R^{n+1}_+ = \R^n\,,
    \eeqc
\end{equation}
where $a = 1-2s$ and $\R^{n+1}_+ = \setcond{(x,y) \in \R^{n+1}}{x \in \R^n, \ y \in (0, +\infty)}$. As it is well known (see \cite{CaffarelliSilvestre}), the limit $- \lim_{y\downarrow 0} y^a \partial_y v$ agrees with $\fraclaplacian u$ up to a positive multiplicative constant. We will refer to the solution of \eqref{Eq:ExtensionProblemDirichlet}, $v$, as the $s$-\textit{harmonic extension} of $u$. This terminology is motivated by the fact that, when $s=1/2$, then $a=0$ and $v$ is the harmonic extension of $u$.

Throughout the paper, $(x,y)$ denote points in $\R^n \times (0, +\infty) = \R^{n+1}_+$. We also use the notation $$\dfrac{\partial v }{ \partial \nu^a} = -\lim_{y \downarrow 0} y^a v_y$$ for the conormal exterior derivative and we will always assume the relation 
$$a = 1-2s \in (-1,1)\,.$$
Moreover, we denote by 
$$
\rho=|x|\quad \textrm{ and }r = \sqrt{\rho^2 + y^2}
$$
the modulus in $\R^n$ and $\R^{n+1}_+$, respectively. Therefore, $v_\rho$ will denote the derivative of $v$ in the horizontal radial direction, that is
$$
v_\rho (x,y) =  \frac{x}{\rho} \cdot \nabla_x v (x,y)  \quad \textrm{ with } \rho=|x|.
$$
We will always use the letter $u$ to denote a function defined in $\R^n$ and the letter $v$ for its $s$-harmonic extension in $\R^{n+1}_+$.

In \cite{CapellaEtAl}, the authors use also an extension problem for the spectral operator $A^s$. Indeed, one can see that the spectral fractional Laplacian can be realized as the boundary Neumann operator of a suitable extension in the half-cylinder $\Omega \times (0,+\infty)$. More precisely, one considers the extension problem
$$
\beqc{rcll}
    \div (y^a \nabla w ) &= & 0 & \text{in } \Omega \times (0,+\infty)\,,\\
    w &= &0 &  \text{on } \partial \Omega \times [0,+\infty)\,, \\
    w &= &u &  \text{on } \Omega \times \{0\}\,,
    \eeqc
$$
with $a = 1 - 2s$.
Then, it can be proven that $-\lim_{y \downarrow 0} y^a w_y$ agrees with $A^s u$ up to a multiplicative constant. Notice that the solution $w$ (extended by $0$ to all $\overline{\R^{n+1}_+}$) is a subsolution of \eqref{Eq:ExtensionProblemDirichlet} and thus, thanks to the maximum principle, one can use the Poisson formula for \eqref{Eq:ExtensionProblemDirichlet} to obtain estimates for $w$. This is what is done in \cite{CapellaEtAl} and suggested that similar arguments could be carried out for the fractional Laplacian, as we indeed do.

The proof of Theorem~\ref{Th:RadialMainTheorem} is mostly based on two ideas. First, by the representation formula for the fractional Laplacian, we see that the $L^{\infty}$ norm of a solution $u$ can be bounded by the integral over $B_1$ of $f(u) /|x|^{n - 2s}$ (see Lemma~\ref{Lemma:Boundu(x)Dirichlet}). Thus, it remains to estimate this integral. We bound it in $B_1 \setminus B_{1/2}$ using that the solution is radially decreasing (see Section~\ref{Sec:RadialSymmetry}). Regarding the integral in $B_{1/2}$, we can relate it with
$$
\int_{B_{1/2}\times (0,1)} y^a r^{-(n + 2 - 2s)}\rho v_\rho \d x \d y + \int_{B_{1/2}\times (0,1)} y^a r^{-(n + 2 - 2s)} y v_y \d x \d y\,,
$$
after an integration by parts in $B_{1/2}\times (0,1) \subset \R^{n+1}_+$ and seeing $f(u)$ as the flux $d_s \partial_{\nu ^a}v$ ---the other boundary terms are estimated using the results of Section~\ref{Sec:PreliminaryResults}. On the one hand, the integral involving $v_y$ can be absorbed in the left-hand side of the estimates by using the identity given in  Lemma~\ref{Lemma:Identityv_yMagicConstant} (see Section~\ref{Sec:ProofMainTheorem} for the details). On the other hand, the integral involving $v_\rho$ can be estimated, after using the Cauchy-Schwarz inequality, thanks to the next key proposition. It provides an estimate for a weighted Dirichlet integral involving the $s$-harmonic extension of stable solutions to \eqref{Eq:SemilinearProblemB1}. 

\begin{proposition}
\label{Prop:IntegrabilityHalfBall}
Let $n \geq 2$, $s \in (0,1)$, and $f$ be a nondecreasing $C^2$ function. Let $u\in H^s(\R^n)$ be a stable radially decreasing solution of \eqref{Eq:SemilinearProblemB1} and $v$ be its $s$-harmonic extension as in \eqref{Eq:ExtensionProblemDirichlet}. Assume that $\alpha$ is any real number satisfying
\begin{equation}
    \label{Eq:AlphaCondition}
    1 \leq \alpha < 1 + \sqrt{n-1}\,.
\end{equation}

Then,
\begin{equation}
\label{Eq:UniformBoundHalfBall}
\int_0^{\infty}
\int_{B_{1/2}} y^a v_\rho^2 \rho^{-2\alpha} \d x \d y \leq C\,,
\end{equation}
where $C$ is a constant depending only on $n$, $s$, $\alpha$, $\norm{u}_{L^{1}(B_1)}$, $\norm{u}_{L^{\infty}(B_{7/8}\setminus B_{3/8})}$, and $\norm{f'(u)}_{L^{\infty}(B_{7/8}\setminus B_{3/8})}$.
\end{proposition}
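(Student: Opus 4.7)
The plan is to adapt the Cabré--Capella stability argument used in~\cite{CapellaEtAl}, transferred to the Caffarelli--Silvestre extension $v$ of $u$. Since $u$ is radially symmetric, $v=v(\rho,y)$ is radial in $x$, and differentiating the extension equation with respect to $\rho$ gives the PDE
\begin{equation*}
\div(y^a\nabla v_\rho)=y^a\frac{n-1}{\rho^2}v_\rho\quad\text{in }\R^{n+1}_+,
\end{equation*}
with $v_\rho|_{y=0}=u_\rho$ and, after differentiating $(-\Delta)^s u=f(u)$, conormal trace $\partial_{\nu^a}v_\rho=d_s f'(u)u_\rho$ on $B_1\times\{0\}$. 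Both the trace and its conormal vanish on $(\R^n\setminus\bar B_1)\times\{0\}$ since $u\equiv 0$ there.

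The first step is to derive a weighted Hardy-type inequality for $v_\rho$. For a Lipschitz function $\phi(\rho,y)$ whose boundary trace $\phi(\cdot,0)$ is compactly supported in $B_1$, I would apply the stability condition~\eqref{Eq:StabilityCondition} with $\xi=\phi(\cdot,0)u_\rho$, using $\phi v_\rho$ as a (non-harmonic) extension of $\xi$; its weighted Dirichlet energy majorizes $[\xi]_{H^s}^2$ by the minimizing property of the $s$-harmonic extension. On the other hand, multiplying the PDE for $v_\rho$ by $\phi^2 v_\rho$ and integrating by parts yields an exact identity for $d_s\int_{B_1}f'(u)\phi^2 u_\rho^2\,\d x$ in terms of the same gradient and cross terms that appear in the stability bound, plus the extra piece $(n-1)\int y^a\phi^2 v_\rho^2\rho^{-2}\,\d x\,\d y$ coming from the right-hand side of the PDE. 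Subtracting the two relations, the energy $\int y^a\phi^2|\nabla v_\rho|^2$ and the mixed term cancel completely, giving
\begin{equation*}
(n-1)\int_{\R^{n+1}_+}y^a\frac{\phi^2 v_\rho^2}{\rho^2}\,\d x\,\d y \;\leq\; \int_{\R^{n+1}_+}y^a v_\rho^2\,|\nabla\phi|^2\,\d x\,\d y.
\end{equation*}

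Next I would plug in the specific test function $\phi(\rho,y)=\eta(\rho)\,\rho^{1-\alpha}$, where $\eta\in C^1([0,\infty))$ equals $1$ on $[0,1/2]$ and vanishes on $[7/8,\infty)$. Since $\partial_y\phi=0$, expanding $(\partial_\rho\phi)^2$ produces a term $(\alpha-1)^2\eta^2\rho^{-2\alpha}$ of the same form as $\phi^2/\rho^2$, which can be absorbed into the left-hand side thanks to the hypothesis $(\alpha-1)^2<n-1$. The remaining terms on the right involve the factor $\eta'$ and are therefore supported in the annulus $B_{7/8}\setminus B_{1/2}$, where all weights in $\rho$ are bounded. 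This yields
\begin{equation*}
\bigl[(n-1)-(\alpha-1)^2\bigr]\int_0^\infty\!\!\int_{B_{1/2}} y^a\rho^{-2\alpha}v_\rho^2\,\d x\,\d y \;\leq\; C\int_0^\infty\!\!\int_{B_{7/8}\setminus B_{1/2}}y^a v_\rho^2\,\d x\,\d y,
\end{equation*}
with $C$ depending only on $n$, $s$ and $\alpha$, reducing the proposition to bounding the right-hand side with the claimed dependence.

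This last bound is the most delicate point, and it is where the preliminary results of Section~\ref{Sec:PreliminaryResults} should enter. Because $B_{7/8}\setminus B_{1/2}$ is compactly contained in $B_{7/8}\setminus B_{3/8}$, the Poisson representation for $v$ combined with standard interior Schauder theory for the weighted equation produces a pointwise bound for $|v_\rho|$ on a cylinder $(B_{7/8}\setminus B_{3/8})\times[0,1]$ in terms of $\|u\|_{L^\infty(B_{7/8}\setminus B_{3/8})}$, $\|f'(u)\|_{L^\infty(B_{7/8}\setminus B_{3/8})}$ and $\|u\|_{L^1(\R^n)}$; this controls the part of the integral with $y\in(0,1)$. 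For $y>1$ the Poisson formula gives algebraic decay of $v$ and its derivatives depending only on $\|u\|_{L^1(\R^n)}$, whence the tail $\int_1^\infty\int_{B_{7/8}} y^a v_\rho^2\,\d x\,\d y$ is finite with the correct dependence. The main obstacle is the rigorous justification of the formal computations above: both the singular weight $\rho^{1-\alpha}$ (when $\alpha>1$) and the use of $v_\rho$ itself as a test-function factor call for an approximation argument. I would therefore first work with the regularized test function $(\rho+\varepsilon)^{1-\alpha}\eta(\rho)$ together with an additional cutoff in $y$ at infinity, derive the inequality uniformly in these regularization parameters, and then pass to the limit by monotone convergence to conclude.
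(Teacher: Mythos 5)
Your overall strategy coincides with the paper's: test stability with $\xi=\rho^{1-\alpha}v_\rho\eta$, cancel the $f'(u)$ and Dirichlet-energy terms using $\div(y^a\nabla v_\rho)=y^a(n-1)\rho^{-2}v_\rho$ and the Robin trace relation, absorb the $(\alpha-1)^2$ term thanks to $(\alpha-1)^2<n-1$, and control the remaining annulus integral by a pointwise bound for $v_\rho$ up to $\{y=0\}$ together with the $y^{-(n+1)}$ decay of Proposition~\ref{Prop:GradientEstimate}. However, two steps are genuinely under-justified. First, your approximation scheme (the regularized weight $(\rho+\varepsilon)^{1-\alpha}\eta(\rho)$ plus a cutoff in $y$) only tames the singular weight; it does not make $\xi=\phi v_\rho$ an admissible function in \eqref{Eq:StabilityConditionExtension}, nor does it justify the integration by parts of the equation for $v_\rho$ against $\phi^2 v_\rho$, near the origin. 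The proposition does not assume $u$ bounded, so $f(u)$, and hence $\nabla v_\rho$, is not controlled near $(0,0)$, and $\phi v_\rho$ need not belong to $H^1(\R^{n+1}_+,y^a)$ there. The paper inserts an additional cutoff $\zeta_\delta$ vanishing for $\rho\leq\delta$ and removes it in the limit $\delta\to0$ using only that $v_\rho\in L^2(\R^{n+1}_+,y^a)$, i.e.\ that $u\in H^s(\R^n)$; your argument never invokes this hypothesis, which is a warning sign, since it is precisely (and only) at this point that it is needed.

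Second, the bound for $\int_0^1\int_{B_{7/8}\setminus B_{1/2}}y^a v_\rho^2\,\d x\,\d y$ is not ``Poisson representation plus standard interior Schauder theory'': the estimate is needed up to $\{y=0\}$, where the equation degenerates and $v_\rho$ only satisfies the boundary condition $\partial_{\nu^a}v_\rho=f'(u)u_\rho/d_s$, so interior theory gives nothing on that boundary portion. This is exactly the content of Proposition~\ref{Prop:HorizontalGradientEstimateRing}, which the paper proves through a non-routine argument: the sign of $v_{x_i}$ coming from radial monotonicity, the boundary Harnack inequality of Lemma~\ref{Lemma:HarnackInequalityNeumann} for the degenerate Robin problem, an integration-by-parts bound of $\inf(-v_{x_i})$ by $\|v\|_{L^\infty}$, and the half-ball bound of Lemma~\ref{Lemma:BoundLInfinityExtension}; indeed this is highlighted as the estimate missing in \cite{CapellaEtAl}. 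A more pedestrian substitute (interior $C^1$ bounds for $u$ on a slightly larger annulus, then splitting the Poisson integral and integrating by parts in the near region) can be made to work, but it must be written out and it naturally introduces a dependence on $\|f(u)\|_{L^\infty}$ of the annulus; as stated, your sketch leaves this key boundary estimate unproved.
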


The key point to establish Proposition~\ref{Prop:IntegrabilityHalfBall} ---as well as its analogous in \cite{CapellaEtAl}--- is the particular choice of the test function $\xi$ in the stability condition \eqref{Eq:StabilityConditionExtension}, which is equivalent to \eqref{Eq:StabilityCondition} when considering the extension to $\R^{n+1}_+$ of functions defined in $\R^n$. We take
\begin{equation}
\label{Eq:TestFunctionStability}
\xi = \rho^{1-\alpha}v_\rho \zeta \,,
\end{equation}
where $\alpha$ satisfies \eqref{Eq:AlphaCondition}, $v_\rho$ is the horizontal radial derivative of $v$, and $\zeta$ is a cut-off function. This choice, after controlling a number of integrals, will lead to \eqref{Eq:UniformBoundHalfBall}. A similar idea was already used by X.~Cabré and A.~Capella in \cite{CabreCapella-Radial} to prove the boundedness of $u^*$ in the radial case for the classical Laplacian, and later by A. ~Capella, J.~Dávila, L.~Dupaigne and Y.~Sire in \cite{CapellaEtAl} for $A^s$. 

Furthermore, another important ingredient in order to establish Theorem~\ref{Th:RadialMainTheorem} and Proposition~\ref{Prop:IntegrabilityHalfBall} is a crucial estimate for the $s$-harmonic extension of solutions to \eqref{Eq:SemilinearProblemB1}. In Proposition~\ref{Prop:HorizontalGradientEstimateRing} we establish such estimate, whose proof was missing in \cite{CapellaEtAl}, as mentioned before. It controls pointwise the horizontal gradient of $v$, where $v$ is the $s$-harmonic extension of $u$, in a cylindrical annulus about the origin.

\begin{remark}
\label{Remark:KnownResultsLaplacian}
The local version of problem \eqref{Eq:ProblemLambda} was first studied in the seventies and eighties, essentially for the exponential and power nonlinearities. When $f(u)= e^u$, it is known that $u^* \in L^{\infty}(\Omega)$ if $n\leq 9$ (see \cite{CrandallRabinowitz}), while $u^*(x) = \log |x|^{-2} $ when $\Omega = B_1$ and $n\geq 10$ (see \cite{JosephLundgren}). Similar results hold for $f(u) = (1 +u)^p$, and also for functions $f$ satisfying a limit condition at infinity (see \cite{Sanchon}). This is extended to the nonlocal case in \cite{RosOtonSerra-Extremal}, where the condition $n \leq 9$ becomes now $n < 10s$.

For the local case and general nonlinearities, the first result concerning the boundedness of the extremal solution was obtained by G.~Nedev \cite{Nedev}, who proved that $u^*$ is bounded in dimensions $n\leq3$ whenever $f$ is convex. The result in \cite{RosOtonSerra-Extremal} for $n < 4s$ also extends this to the nonlocal setting. 

Later, X.~Cabré and A.~Capella~\cite{CabreCapella-Radial} obtained an $L^\infty$ bound for $u^*$, when $s=1$ and $\Omega = B_1$, whenever $n\leq 9$. The best known result at the moment for general $f$ and $s=1$ is due to X.~Cabré~\cite{Cabre-Dim4}, and states that in dimensions $n \leq 4$ the extremal solution is bounded for every convex domain $\Omega$. This result was extended by S.~Villegas \cite{Villegas} to nonconvex domains. Nevertheless, the problem is still open in dimensions $5\leq n \leq 9$.
\end{remark}

As mentioned before, to our knowledge the only articles dealing with problem \eqref{Eq:ProblemLambda} are \cite{RosOtonSerra-Extremal} and \cite{RosOton-Gelfand}. There, the authors work in $\R^n$ and do not use the extension problem for the fractional Laplacian. For this reason, we include in the appendix of this article an alternative proof ---which uses the extension problem--- of the result of X.~Ros-Oton and J.~Serra \cite{RosOtonSerra-Extremal} that establishes the boundedness of the extremal solution in dimensions $n<10s$ in any domain when $f(u) = e^u$. This is Proposition~\ref{Prop:GelfandProblem10s} below.

The paper is organized as follows. Section~\ref{Sec:ExtensionProblem} is devoted to recall some results concerning the extension problem for the fractional Laplacian, as well as to express the stability condition using the extension problem. In Section~\ref{Sec:PreliminaryResults}, we establish some preliminary results which are used in the following sections. Section~\ref{Sec:RadialSymmetry} focuses on the symmetry and monotonicity of bounded stable solutions. Proposition~\ref{Prop:IntegrabilityHalfBall} is proved in Section~\ref{Sec:WeightedIntegrability}, and Theorem~\ref{Th:RadialMainTheorem} in Section~\ref{Sec:ProofMainTheorem}. Finally, in Appendix~\ref{Sec:GelfandProblem} we give an alternative proof of the result of \cite{RosOtonSerra-Extremal} concerning the exponential nonlinearity.

\section{The extension problem for the fractional Laplacian}
\label{Sec:ExtensionProblem}

In this section we recall briefly some results concerning the extension problem for the fractional Laplacian. The main feature is the following well known relation: if $v$ is the solution of the extension problem \eqref{Eq:ExtensionProblemDirichlet}, then
\begin{equation}
\label{Eq:DirichletToNeumannRelation}
\fraclaplacian u = \fraclaplacian \{v(\cdot,0)\} = d_s \dfrac{\partial v}{\partial \nu^a}\,, 
\end{equation}
for a positive constant $d_s$ which only depends on $s$.

Hence,  given $s\in (0,1)$, a function $u$ defined in $\R^n$ is a solution of $\fraclaplacian u = h$ in $\R^n$ if, and only if, its $s$-harmonic extension in $\R^{n+1}_+$ solves the problem
\begin{equation}
\label{Eq:ExtensionProblemNeumann}
    \beqc{\PDEsystem}
    \div (y^a \nabla v ) &= & 0 & \text{in } \R^{n+1}_+ \,,\\
    \dfrac{\partial v}{\partial \nu^a}  &= & \dfrac{h}{d_s} &  \text{on }  \R^n\,.
    \eeqc
\end{equation}

Recall that for problem \eqref{Eq:ExtensionProblemDirichlet} we have an explicit Poisson formula: 
$$
v(x,y) = P * u = \int_{\R^n} P(x-z, y) u(z) \d z\,, \  \textrm{ where } \ P(x, y) = P_{n,a} \dfrac{y^{1-a}}{\bpar{|x|^2 + y^2}^{\frac{n+1-a}{2}}}
$$
and the constant $P_{n,s}$ is such that, for every $y> 0$, $\int_{\R^n}P(x, y)\d x = 1\,$.

The relation between $v$ and $-y^a v_y$ via a conjugate equation gives a useful formula for the $y$-derivative of the solution of \eqref{Eq:ExtensionProblemNeumann}.

\begin{lemma}[see \cite{CaffarelliSilvestre}]
\label{Lemma:PoissonFormulaConjugate}
Let $s \in (0,1)$, $a = 1 -2s$, $h:\R^n \rightarrow \R$ and $v$ be the solution of \eqref{Eq:ExtensionProblemNeumann}. Then, 
$$
-v_y(x,y) = \Gamma * \dfrac{h}{d_s} = \dfrac{1}{d_s}\int_{\R^n}  \Gamma(x-z, y) h (z) \d z\,.
$$
where
$$
\Gamma(x, y) = \Gamma_{n,s} \dfrac{y}{\bpar{|x|^2 + y^2}^{\frac{n+1+a}{2}}} =  \Gamma_{n,s} \dfrac{y}{\bpar{|x|^2 + y^2}^{\frac{n+2-2s}{2}}} \,,
$$
with a constant $\Gamma_{n,s}$ depending only on $n$ and $s$.
\end{lemma}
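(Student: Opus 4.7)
The plan is to exploit the \emph{conjugate equation} trick of Caffarelli--Silvestre: if $v$ solves the weighted equation with weight $y^{a}$, then the function $w := -y^{a}v_{y}$ solves the corresponding equation with weight $y^{-a}$. Since $-a = 1-2(1-s)$, this means $w$ is the $\tilde{s}$-harmonic extension of its trace with $\tilde{s}=1-s$. The Poisson formula for parameter $\tilde{s}$ then gives $w$ as a convolution, and dividing by $y^{a}$ recovers the claimed formula for $-v_{y}$.

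First, I would verify the conjugate equation by a direct computation. Expanding $\div(y^{a}\nabla v)=0$ yields
\[
\laplacian_{x} v + \frac{a}{y}\, v_{y} + v_{yy} = 0 \quad \text{in } \R^{n+1}_{+},
\]
and differentiating this identity with respect to $y$, then regrouping, shows that $w = -y^{a}v_{y}$ satisfies
\[
\div(y^{-a}\nabla w) = 0 \quad \text{in } \R^{n+1}_{+}.
\]
Next, by the very definition of the conormal derivative,
\[
w\big|_{y=0} = -\lim_{y\downarrow 0} y^{a} v_{y} = \frac{\partial v}{\partial \nu^{a}} = \frac{h}{d_{s}},
\]
so $w$ is the $\tilde{s}$-harmonic extension of $h/d_{s}$ with $\tilde{s}=1-s$.

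Applying the explicit Poisson representation for the Dirichlet problem \eqref{Eq:ExtensionProblemDirichlet} with $s$ replaced by $\tilde{s}$ (so that $1-\tilde{a}=1+a$ and $n+1-\tilde{a} = n+1+a = n+2-2s$), we obtain
\[
w(x,y) = P_{n,-a} \int_{\R^{n}} \frac{y^{1+a}}{\bpar{|x-z|^{2}+y^{2}}^{\frac{n+2-2s}{2}}}\, \frac{h(z)}{d_{s}}\, \d z.
\]
Dividing by $-y^{a}$ yields the lemma with $\Gamma_{n,s}=-P_{n,-a}$ (up to the sign convention chosen for the kernel), giving exactly the stated expression for $\Gamma$.

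The point requiring care is the uniqueness/representability step: in order to assert that $w$ is \emph{given by} the Poisson convolution with its trace, one needs $w$ to lie in a class in which the Dirichlet problem for the $\tilde{s}$-extension is uniquely solvable (e.g.\ appropriate decay at infinity, so that the convolution is well defined and $w$ minus the convolution vanishes). For reasonable data $h$ this is standard; if needed, one argues first for $h$ in a dense subclass (say smooth and decaying) and then passes to the limit. This is the only substantive technicality; the rest is algebraic manipulation of the Poisson kernel with $s$ replaced by $1-s$.
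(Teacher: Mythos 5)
Your proposal follows exactly the paper's argument: define $w=-y^{a}v_{y}$, check that it solves $\div(y^{-a}\nabla w)=0$ with trace $h/d_{s}$ (the Neumann condition for $v$), and apply the Poisson formula for the extension with parameter $1-s$, i.e.\ with $a$ replaced by $-a$. The only slip is a harmless sign bookkeeping: since $w=-y^{a}v_{y}$, one divides $w$ by $y^{a}$ (not $-y^{a}$) and obtains $\Gamma_{n,s}=P_{n,-a}>0$, consistent with the positivity of the kernel in the statement.
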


This is proved by considering the function $w = - y^a v_y$. A simple computation shows that $w$ solves the conjugate problem
$$
\beqc{\PDEsystem}
    \div (y^{-a} \nabla w ) &= & 0 & \text{in } \R^{n+1}_+ \,, \\
    w &= &h / d_s&  \text{on } \partial \R^{n+1}_+ = \R^n\,.
\eeqc
$$
Then, we use the Poisson formula for this problem to obtain
$$
- y^a v_y = w =  \dfrac{y^{1+a}}{d_s}\int_{\R^n}  P_{n,-a} \dfrac{h (z)}{\bpar{|x-z|^2 + y^2}^{\frac{n+1+a}{2}}} \d z\,.
$$

Recall also that the fundamental solution of the fractional Laplacian is well known. Namely, given $h:\R^n \rightarrow \R$ regular enough (for instance $h$ continuous with compact support), the unique continuous and bounded solution of $\fraclaplacian u = h $ in $\R^n$ is given by
$$
u(x) = C \int_{\R^n} \dfrac{h (z)}{|x-z|^{n-2s}} \d z \,,
$$
for a constant $C$ which depends only on $n$ and $s$ (see \cite{CaffarelliSilvestre,CabreSireI}). Using this last formula and the maximum principle, we easily deduce a useful pointwise bound for solutions of the Dirichlet problem for the fractional Laplacian. It is given by the following lemma.

\begin{lemma}
\label{Lemma:Boundu(x)Dirichlet}
Let $\Omega \subset \R^n$ a bounded smooth domain, $s\in (0,1)$ and $h:\Omega \rightarrow \R$ a nonnegative bounded function. Let $u \in H^s(\R^n)$ be a weak solution of the Dirichlet problem
$$
	\beqc{\PDEsystem}
	\fraclaplacian u &= & h & \textrm{in } \Omega \,,\\
	u &= &0 &  \textrm{in } \R^n\setminus \Omega\,.
	\eeqc
$$

Then, for every $x\in \R^n$,
\begin{equation}
\label{Eq:Boundu(x)Dirichlet}
u(x) \leq C \int_{\Omega} \dfrac{h (z)}{|x-z|^{n-2s}} \d z \,,
\end{equation}
for a constant $C$ which depends only on $n$ and $s$.
\end{lemma}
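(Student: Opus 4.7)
The plan is to compare $u$ with the solution $\tilde{u}$ of the same equation posed in the whole $\R^n$ (without the vanishing exterior condition), and then apply the maximum principle. Extend $h$ by zero to $\R^n \setminus \Omega$; since $h$ is bounded and $\Omega$ is bounded, this extension is continuous outside a set of measure zero and has compact support, which is enough regularity to invoke the representation formula for the fundamental solution of $\fraclaplacian$ recalled just before the statement. Define
$$
\tilde{u}(x) := C \int_{\R^n} \dfrac{h(z)}{|x-z|^{n-2s}}\,\d z = C \int_{\Omega} \dfrac{h(z)}{|x-z|^{n-2s}}\,\d z\,,
$$
with $C = C(n,s)$ the constant from the fundamental solution formula, so that $\fraclaplacian \tilde{u} = h$ in $\R^n$ (understanding $h \equiv 0$ outside $\Omega$).

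Since $h \geq 0$ and the convolution kernel is strictly positive, we get $\tilde{u} \geq 0$ in $\R^n$. In particular, $\tilde{u} \geq 0 = u$ on $\R^n \setminus \Omega$. Now consider the difference $w := \tilde{u} - u$. By linearity, $\fraclaplacian w = h - h = 0$ in $\Omega$, and $w \geq 0$ on $\R^n \setminus \Omega$ by the previous observation.

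The maximum principle for the fractional Laplacian (applied to the weak solution $w$, which makes sense since $u \in H^s(\R^n)$ and, by the explicit form, $\tilde{u}$ is well-defined and in $L^1_{\mathrm{loc}}$ with controlled growth at infinity) then yields $w \geq 0$ in $\Omega$, i.e. $u(x) \leq \tilde{u}(x)$ pointwise in $\Omega$, which is the desired bound \eqref{Eq:Boundu(x)Dirichlet}. For $x \in \R^n \setminus \Omega$ the inequality is trivial because $u(x) = 0$ while the right-hand side is nonnegative.

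The main conceptual step is the comparison argument via the maximum principle; the only mild technicality is ensuring that the fundamental solution representation of $\tilde{u}$ is applicable under the stated assumptions (bounded $h$ with compact support in $\Omega$), which can be handled either by approximating $h$ by continuous compactly supported functions and passing to the limit in the pointwise formula, or by invoking the standard nonlocal maximum principle in $H^s$ directly. No dependence on $\Omega$ enters the constant $C$, since it comes solely from the Riesz kernel normalization in $\R^n$.
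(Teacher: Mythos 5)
Your proof is correct and takes essentially the same route the paper intends: the paper only justifies this lemma with the one-line remark that it follows from the fundamental solution formula together with the maximum principle, and your comparison of $u$ with the Riesz potential $\tilde u$ of $h$ (extended by zero), via the difference $w=\tilde u-u$, is exactly that argument spelled out. The additional care you take with the approximation of the (possibly discontinuous) extension of $h$ is a reasonable way to make the sketch rigorous and does not change the approach.
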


This result is the analogous of Lemma 6.1 in \cite{CapellaEtAl} and is the first step in order to prove Theorem~\ref{Th:RadialMainTheorem}. Indeed, we will estimate the $L^\infty$ norm of a solution by controlling the right-hand side of \eqref{Eq:Boundu(x)Dirichlet}, which can be related to the Dirichlet integral in \eqref{Eq:UniformBoundHalfBall} through an integration by parts (see Section~\ref{Sec:ProofMainTheorem}).

As mentioned in the introduction, the main property in which are based our estimates is stability. Recall that a solution of \eqref{Eq:ProblemLambda} is stable if it satisfies \eqref{Eq:StabilityCondition}. Since we want to work with the $s$-harmonic extension of such solutions, we need to rewrite the stability condition \eqref{Eq:StabilityCondition} in terms of the extension of functions in $\R^{n+1}_+$.

It is well known that the space $H^s(\R^n)$ coincides with the trace of $H^1( \R^{n+1}_+, y^a)$  on $\partial \R^{n+1}_+$ (see for instance \cite{Frank2013}). In particular, every function $\xi : \R^{n+1}_+ \rightarrow \R$ such that $\xi \in L^2_{\mathrm{loc}}(\R^{n+1}_+, y^a)$ and $\nabla \xi \in L^2(\R^{n+1}_+, y^a)$ has a trace on $H^{s}(\R^n)$ and satisfies the following inequality (see Proposition~3.6 in \cite{Frank2013}):
\begin{equation}
\label{Eq:TraceInequalityHalfSpace}
\seminorm{\tr\xi}_{H^s(\R^n)} \leq d_s \seminorm{\xi}_{H^1( \R^{n+1}_+, y^a)}\,,
\end{equation}
where we use the notation
$$
\seminorm{\varphi}_{H^1( \R^{n+1}_+, y^a)} = \int_{\R^{n+1}_+} y^a |\nabla \varphi|^2 \d x \d y
$$
and $d_s$ is the constant appearing in \eqref{Eq:DirichletToNeumannRelation}. In addition, $d_s$ is the optimal constant in \eqref{Eq:TraceInequalityHalfSpace}, as seen next. 

To show why $d_s$ is the optimal constant, we find a case where the equality is attained. Consider $w \in H^s(\R^n)$ and let $W$ denote the solution of
$$
\beqc{\PDEsystem}
    \div (y^a \nabla W ) &= & 0 & \text{in } \R^{n+1}_+\,,\\
    W &= & w &  \text{on } \R^n\,.
    \eeqc
$$
Notice that $W$ minimizes the seminorm $\seminorm{\cdot}_{H^1( \R^{n+1}_+, y^a)}$ among all functions whose trace on $\R^n$ is $w$, because it solves the Euler-Lagrange equation of the functional
$$
E(\varphi) = \int_{\R^{n+1}_+} y^a |\nabla \varphi|^2 \d x \d y\,.
$$
Therefore, integrating by parts and using that $d_s \dfrac{\partial W}{\partial \nu ^a} = \fraclaplacian w$ at $\R^n$, we have
\begin{align*}
d_s\seminorm{W}_{H^1( \R^{n+1}_+, y^a)} &= d_s\int_{\R_+^{n+1}} y^a |\nabla W |^2 \d x \d y \\ 
& = d_s \int_{\R^n} w   \dfrac{\partial W}{\partial \nu ^a} \d x \\
&= \int_{\R^n} w \fraclaplacian w \d x  \\ 
&= \int_{\R^n} (-\Delta)^{s/2} w (-\Delta)^{s/2} w \d x \\ &=\seminorm{w}_{H^s(\R^n)}\,.
\end{align*}
This shows that the optimal constant in \eqref{Eq:TraceInequalityHalfSpace} is $d_s$ and that the equality is achieved when one takes the $s$-harmonic extension of a function defined in $\R^n$.

Using \eqref{Eq:TraceInequalityHalfSpace}, we say that $u$ is a stable solution to
$$
\beqc{\PDEsystem}
\fraclaplacian u &= & f(u) &  \text{in} \quad   \Omega\,,\\
u &= &0 &  \text{in} \quad   \R^n\setminus \Omega \,.
\eeqc
$$ 
if
\begin{equation}
\label{Eq:StabilityConditionExtension}
  \int_\Omega f'(u) \xi^2 \dx \leq d_s\int_{\R_+^{n+1}} y^a |\nabla \xi |^2 \d x \d y \,,
\end{equation}
for every $\xi \in H^1( \R^{n+1}_+, y^a)$ such that its trace has compact support in $\Omega$. Notice that it is not necessary to take $\xi$ as the $s$-harmonic extension of its trace (that is, $\xi$ need not solve $\div (y^a \nabla \xi) = 0$ in $\R^{n+1}_+$). This gives us more flexibility for the choice of functions in the stability condition. However, if we  want an inequality completely equivalent to \eqref{Eq:StabilityCondition} ---in the sense that we do not lose anything when going to $\R^{n+1}_+$---, we need to consider always test functions solving $\div (y^a \nabla \xi) = 0$ in $\R^{n+1}_+$.

\section{Preliminary results: estimates for solutions of \eqref{Eq:SemilinearProblemB1}}
\label{Sec:PreliminaryResults}

The purpose of this section is to provide some estimates for solutions of \eqref{Eq:SemilinearProblemB1} that will be used in the subsequent sections. In particular, we give estimates for the derivatives of the $s$-harmonic extension of solutions to \eqref{Eq:SemilinearProblemB1}.

The three main estimates of this section are stated below. The first two results, Lemma~\ref{Lemma:Estimate for v_i and v_y} and Proposition~\ref{Prop:GradientEstimate}, concern the decay at infinity of $\nabla v$, where $v$ solves $\div(y^a \nabla v) = 0$ in $\R^{n+1}_+$. We control the decay at infinity since we deal with integrals in $\R^{n+1}_+$ weighted by $y^a$, with $a \in (-1,1)$, and $y^a$ is not integrable at infinity. In \cite{CapellaEtAl}, the authors use that the extension of solutions for the spectral fractional Laplacian, as well as their derivatives, have exponential decay as $y \rightarrow +\infty$. This allows them to overcome the problem of integrability at infinity. Instead, in the case of the fractional Laplacian, such exponential decay does not hold. Nevertheless, we establish a power decay in Lemma~\ref{Lemma:Estimate for v_i and v_y} and in Proposition~\ref{Prop:GradientEstimate}, and this will be enough for our purposes. The estimates we deduce in these two results are in terms of $u$, the trace of $v$ on $\R^n$, but we do not assume that $u$ solves any equation in $\R^n$. On the contrary, the third result of this section, Proposition~\ref{Prop:HorizontalGradientEstimateRing}, is an estimate up to $\{y=0\}$ and in this case we assume that $u$ is a solution to \eqref{Eq:SemilinearProblemB1}. 

Before presenting the three results of this section, let us make a comment on the right-hand sides of the estimates that we establish. We point out that the constants appearing in the statement of Lemma~\ref{Lemma:Estimate for v_i and v_y} depend on $\norm{u}_{L^\infty (B_1)}$ instead of $\norm{u}_{L^1(B_1)}$, in contrast with the other two main estimates of this section (Propositions~\ref{Prop:GradientEstimate} and \ref{Prop:HorizontalGradientEstimateRing}). This will cause no problem since  the lemma will be used only in Section~\ref{Sec:RadialSymmetry}, where we will assume that $u\in L^\infty (B_1)$, to show that certain boundary terms go to zero as $r\to \infty$. Therefore, the specific dependence of the constants is not relevant as long as they are finite. Instead, for the terms that remain through the estimates, it is important to have dependency only on the $L^1$ norm of the solution ---since weak solutions are only assumed to be in $L^1(B_1)$, and since for problem \eqref{Eq:ProblemLambda} the $L^1$ norm of $u_\lambda$, with $\lambda < \lambda^*$, is bounded uniformly in $\lambda$, as explained next.

\begin{remark}
	\label{Remark:UniformEstimatesInLambda}
	When one considers stable solutions $u_\lambda$ of \eqref{Eq:ProblemLambda} in general domains $\Omega$, the only available estimate that is uniform in $\lambda$ is the following:
	$$
	\norm{u_\lambda}_{L^1(\Omega)} \leq \norm{u^*}_{L^1(\Omega)} \quad \textmd{for all }\lambda < \lambda^*\,.
	$$
	Indeed, a simple argument shows that $\norm{u_\lambda}_{L^1(\Omega)}$ is uniformly bounded as $\lambda \uparrow \lambda^*$. Then, it follows that $u^*$ is a weak solution of \eqref{Eq:ProblemLambda}, i.e., belonging to $L^1(\Omega)$ (see \cite{RosOtonSerra-Extremal} for the details). In the case $\Omega = B_1$, the solutions $u_\lambda$ are radially decreasing (see Section~\ref{Sec:RadialSymmetry}). Hence, the $L^\infty$ norm of $u_\lambda$ in sets that are away from the origin is also bounded independently of $\lambda$, since in those sets it can be controlled by the $L^1$ norm of $u^*$. We have indeed
	$$
	\norm{u_\lambda}_{L^\infty (B_1\setminus \overline{B_R})} \leq \dfrac{C}{R^n} \norm{u_\lambda}_{L^1(B_1)} \leq \dfrac{C}{R^n} \norm{u^*}_{L^1(B_1)}  \quad \textmd{for every } R \in (0,1) \text{ and  } \lambda < \lambda^*\,.$$
	
	In fact, if $u\in L^1(B_1)$ is a weak solution of \eqref{Eq:SemilinearProblemB1} that is radially decreasing, automatically $u\in L^\infty_{\mathrm{loc}}(B_1\setminus\{0\})$. Then, by interior estimates for the fractional Laplacian (see Corollaries~2.3 and 2.5 in \cite{RosOtonSerra-Regularity}), $u$ is, in $B_1\setminus\{0\}$, at least as regular as the nonlinearity $f$. Since in this paper we assume $f\in C^2$, then we have $u\in C^{2,\alpha}_{\mathrm{loc}} (B_1\setminus\{0\})$ for some $\alpha > 0$. The hypothesis on $f$ can be slightly weakened depending on $s$, as it is explained in Remark~\ref{Remark:SharpfRegularity}.
\end{remark}

The following is the first result of this section (recall that we use the notation $r = |(x,y)|$).
\begin{lemma}
	\label{Lemma:Estimate for v_i and v_y}
	
	Let $u\in C^1 (B_1)\cap L^\infty(B_1)$ be such that $u\equiv 0$ in $\R^n \setminus B_1$, and let $v$ be its $s$-harmonic extension. Then, we have the following estimates:
	\begin{equation}
	\label{Eq:EstimateForv_i}	
	|v_{x_i} (x,y) | \leq C \dfrac{y^{2s}}{r^{n+1+2s}} \quad \textrm{ for } |x|>2,\ y>0\,,
	\end{equation}
	for $i=1,\ldots,n$, and
	\label{Lemma:EstimateForv_y}
	\begin{equation}
	\label{Eq:EstimateForv_y}	
	|v_y (x,y) | \leq C \dfrac{y^{2s - 1}}{r^{n+2s}} \quad \textrm{ for } |x|>2,\ y>0\,,
	\end{equation}
	for some constants $C$ depending only on $n$, $s$ and $\norm{u}_{L^\infty (B_1)}$.
	
\end{lemma}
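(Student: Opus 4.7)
The plan is to prove both estimates directly from the explicit Poisson representation
\[
v(x,y) = P_{n,s}\int_{B_1} \frac{y^{2s}\,u(z)}{\bpar{|x-z|^2+y^2}^{(n+2s)/2}}\d z,
\]
which is valid since $u$ is supported in $\overline{B_1}$, $1-a=2s$, and $n+1-a=n+2s$. Because the kernel is smooth in $(x,y)$ off of $\{y=0\}\cap\overline{B_1}\times\{0\}$ and the integration region $B_1$ is compact, I can differentiate under the integral sign to obtain, for $i=1,\dots,n$,
\[
v_{x_i}(x,y) = -(n+2s)P_{n,s}\int_{B_1} \frac{y^{2s}(x_i-z_i)}{\bpar{|x-z|^2+y^2}^{(n+2s+2)/2}}u(z)\d z,
\]
and, after using the product rule on $y^{2s}$ and on the power of $|x-z|^2+y^2$,
\[
v_y(x,y) = P_{n,s}\!\int_{B_1}\!\!\left[\frac{2s\,y^{2s-1}}{\bpar{|x-z|^2+y^2}^{(n+2s)/2}} - \frac{(n+2s)y^{2s+1}}{\bpar{|x-z|^2+y^2}^{(n+2s+2)/2}}\right] u(z)\d z.
\]

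The key geometric input is that for $|x|>2$ and $z\in B_1$ one has $|x-z|\geq |x|-1\geq |x|/2$, hence
\[
|x-z|^2+y^2 \,\geq\, \tfrac{1}{4}|x|^2+y^2 \,\geq\, \tfrac{1}{4}(|x|^2+y^2) \,=\, \tfrac{r^2}{4}.
\]
Combined with $|u(z)|\leq \norm{u}_{L^\infty(B_1)}$ and $|B_1|<\infty$, this yields a uniform control on every denominator appearing above.

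For \eqref{Eq:EstimateForv_i}, I would bound $|x_i-z_i|\leq |x-z|\leq \bpar{|x-z|^2+y^2}^{1/2}$, reducing the exponent of the denominator by one half-power, and then insert the geometric bound to obtain
\[
|v_{x_i}(x,y)| \leq C\,y^{2s}\int_{B_1}\frac{|u(z)|\d z}{\bpar{|x-z|^2+y^2}^{(n+2s+1)/2}}\leq C\,\frac{y^{2s}}{r^{n+1+2s}}\,\norm{u}_{L^\infty(B_1)},
\]
which is \eqref{Eq:EstimateForv_i}. For \eqref{Eq:EstimateForv_y}, I estimate the two terms in $v_y$ separately: the first gives $C\,y^{2s-1}/r^{n+2s}$ directly, while the second produces $C\,y^{2s+1}/r^{n+2s+2}$, and absorbing $y^2\leq r^2$ turns this into $C\,y^{2s-1}/r^{n+2s}$ as well.

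There is no real obstacle here beyond a careful bookkeeping of the $y^{2s}$ factor and the degrees of the denominators; the only mild subtlety is that when $s<1/2$ the factor $y^{2s-1}$ in \eqref{Eq:EstimateForv_y} blows up as $y\downarrow 0$, but this is consistent with (and sharp under) the relation $\partial_{\nu^a}v = -\lim_{y\downarrow 0}y^a v_y$ being comparable to $\fraclaplacian u$, which decays like $|x|^{-(n+2s)}$ for $|x|>2$. Thus no cancellation between the two terms of $v_y$ needs to be exploited — a triangle-inequality bound suffices, and the $L^\infty$ dependence in the constant is obtained from $\norm{u}_{L^1(B_1)}\leq |B_1|\norm{u}_{L^\infty(B_1)}$.
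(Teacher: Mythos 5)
Your proposal is correct and follows essentially the same route as the paper: the Poisson formula, differentiation under the integral sign, and the elementary bound $|x-z|\geq |x|-1$ (comparable to $r$ for $|x|>2$); the only cosmetic difference is that you absorb $|x_i-z_i|$ into the kernel via $|x_i-z_i|\leq|x-z|$ where the paper uses $|x_i-z_i|\leq 2r$, and you write out the $v_y$ computation that the paper leaves as ``completely analogous.''
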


The second result of this section also deals with the decay of $\nabla v$ as $y\to + \infty$. The main difference with the previous one is that we establish an estimate that does not depend on the $L^\infty$ norm of the solution, only on its $L^1$ norm. Therefore, it holds not only for bounded solutions but also for weak solutions ---recall \eqref{Eq:DefinitionWeakSolution}. As we will see, the result follows from an argument in the proof of Proposition~4.6 in \cite{CabreSireI}, and is the following.

\begin{proposition}
\label{Prop:GradientEstimate}
Let $s \in (0,1)$ and $a = 1-2s$. Let $v\in L^{2}_{\mathrm{loc}}(\R^{n+1}_+, y^a)$ satisfy $\nabla v \in L^{2}(\R^{n+1}_+, y^a)$  and $\div(y^a \nabla v) =0$ in  $\R_+^{n+1}$. Let $u$ be its trace on $\R^n$. Then, 
\begin{equation}
\label{Eq:GradientEstimate}
|\nabla v(x,y)| \leq \dfrac{C}{y^{n+1}} ||u||_{L^{1}(\R^n)} \ \ \textrm{ for every } \ y > 0
\end{equation}
and for a constant $C$ depending only on $n$ and $s$.
\end{proposition}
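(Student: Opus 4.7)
The plan is to represent $v$ as the Poisson integral of $u$, differentiate under the integral sign, and then bound the resulting kernel uniformly in $x$ for each fixed $y>0$. Since the right-hand side of \eqref{Eq:GradientEstimate} is infinite unless $u \in L^1(\R^n)$, I may assume $u\in L^1(\R^n)$ from the outset.

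First, I would identify $v$ with the convolution $v(x,y) = (P(\cdot, y) * u)(x)$, where $P$ is the Poisson kernel recalled in Section~\ref{Sec:ExtensionProblem},
$$P(x,y) = P_{n,a}\frac{y^{2s}}{(|x|^2 + y^2)^{(n+2s)/2}}.$$
Given $u\in L^1(\R^n)$, the function $\tilde v := P * u$ is a well-defined $a$-harmonic extension of $u$ to $\R^{n+1}_+$, and the difference $w := v - \tilde v$ is $a$-harmonic in $\R^{n+1}_+$ with zero trace on $\R^n$ and $\nabla w \in L^2(\R^{n+1}_+, y^a)$. A weighted energy argument --- integrating $\div(y^a \nabla w)=0$ against $w$ on truncated half-balls, letting the truncation parameters go to the boundary $\{y=0\}$ and to infinity, and using the global finiteness of $\int_{\R^{n+1}_+} y^a |\nabla w|^2$ --- then shows that $\nabla w \equiv 0$, so $v = \tilde v$. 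This is the uniqueness step borrowed from the proof of Proposition~4.6 in \cite{CabreSireI}.

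Next I would differentiate $v$ under the integral sign and estimate $\nabla_{x,y} P$ pointwise. A direct computation yields
$$|\partial_{x_i} P(x,y)| \leq C\frac{y^{2s}|x|}{(|x|^2+y^2)^{(n+2s+2)/2}}, \qquad |\partial_y P(x,y)| \leq C\frac{y^{2s-1}}{(|x|^2+y^2)^{(n+2s)/2}} + C\frac{y^{2s+1}}{(|x|^2+y^2)^{(n+2s+2)/2}}.$$
Using $|x| \leq (|x|^2+y^2)^{1/2}$ in the first bound and $|x|^2+y^2 \geq y^2$ throughout, each summand is controlled by $C/y^{n+1}$. Hence $\|\nabla_{x,y} P(\cdot, y)\|_{L^\infty(\R^n)} \leq C/y^{n+1}$, and pulling this $L^\infty$ bound out of the convolution gives
$$|\nabla v(x,y)| \leq \|\nabla_{x,y} P(\cdot, y)\|_{L^\infty(\R^n)} \|u\|_{L^1(\R^n)} \leq \frac{C}{y^{n+1}} \|u\|_{L^1(\R^n)},$$
as required.

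The main technical point is the uniqueness step identifying $v$ with $\tilde v = P*u$: one must carefully justify that the boundary contributions in the integration-by-parts identity for $w$ vanish both on $\{y=0\}$ (via the zero trace) and at infinity (via the global $L^2(y^a)$-integrability of $\nabla w$). Once this is secured, the pointwise bound on $\nabla P$ is elementary, and the conclusion follows from Young's convolution inequality $\|f*g\|_{L^\infty} \leq \|f\|_{L^\infty}\|g\|_{L^1}$.
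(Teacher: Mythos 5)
Your proof is correct, but it gains the derivative by a different mechanism than the paper. The paper's argument keeps the Poisson formula only to get the zero-order bound $\|v\|_{L^\infty}\leq C y^{-n}\|u\|_{L^1(\R^n)}$ (from $P(\cdot,y)\leq P_{n,s}y^{-n}$), and then obtains the extra factor $y^{-1}$ by rescaling the equation to a ball $B_{1/2}(0,1)$, where $\div(y^a\nabla\cdot)$ is uniformly elliptic, and invoking classical interior gradient estimates (Gilbarg--Trudinger, Corollary 6.3); this gives $|\nabla v(x_0,y_0)|\leq Cy_0^{-1}\|v\|_{L^\infty(B_{y_0/2}(x_0,y_0))}$ and hence \eqref{Eq:GradientEstimate}. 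You instead differentiate the Poisson kernel directly and check $\|\nabla_{x,y}P(\cdot,y)\|_{L^\infty(\R^n)}\leq Cy^{-n-1}$; your kernel computations and the use of $\|f*g\|_{L^\infty}\leq\|f\|_{L^\infty}\|g\|_{L^1}$ (with differentiation under the integral justified by the locally uniform kernel bounds) are correct. Both routes rest on the identification $v=P*u$ for a finite-energy $a$-harmonic function with trace $u$: the paper simply asserts this, citing the arguments of Proposition~4.6 in \cite{CabreSireI}, while you sketch the uniqueness/energy argument explicitly (note only that $\nabla w\equiv 0$ plus the zero trace is what forces the constant to vanish). Your version is more elementary and self-contained on the estimate side, avoiding interior Schauder theory; the paper's scaling argument is the one that would survive when no explicit kernel is available.
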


The third result we present is new and important. It provides an estimate for the horizontal gradient in the set $(B_{3/4}\setminus \overline{B_{1/2}}) \times (0,1)$. As it is commented in Remark~\ref{Remark:EstimateValidForBothOperators}, this gradient estimate is also valid for the problem studied in \cite{CapellaEtAl} for the operator $A^s$. Therefore, it can be used in the arguments of \cite{CapellaEtAl} in order to complete their proofs at the points where an estimate of this kind is missing (see Remarks~\ref{Remark:MissingTermCapellaEtAl} and \ref{Remark:MissingEstimateCapellaEtAl}).

\begin{proposition}
\label{Prop:HorizontalGradientEstimateRing}
Let $s\in (0,1)$ and $u \in H^s(\R^n)$ be a radially decreasing weak solution of \eqref{Eq:SemilinearProblemB1}, with $f\in C^{2}$. Let $v$ be the $s$-harmonic extension of $u$ given by \eqref{Eq:ExtensionProblemDirichlet} and
$$
A := \bpar{B_{3/4}\setminus \overline{B_{1/2}}} \times (0,1) \subset \R_+^{n+1}\,.
$$

Then, 
\begin{equation}
\label{Eq:HorizontalGradientEstimateHalfBalls}
\norm{\nabla_x v}_{L^{\infty}(A)} \leq C
\end{equation}
for some constant $C$ depending only on $n$, $s$, $\norm{u}_{L^1(B_1)}$,  $\norm{u}_{L^{\infty}(B_{7/8}\setminus B_{3/8})}$, and \\ $\norm{f'(u)}_{L^{\infty}(B_{7/8}\setminus B_{3/8})}$.
\end{proposition}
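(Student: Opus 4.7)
The plan is to split $u$ with a cutoff and estimate the two resulting $s$-harmonic extensions separately. Fix a smooth function $\eta:\R^n\to[0,1]$ with $\mathrm{supp}\,\eta\subset B_{7/8}\setminus\overline{B_{3/8}}$ and $\eta\equiv 1$ on a slightly smaller annulus still containing the projection $B_{3/4}\setminus\overline{B_{1/2}}$ of $A$. Write $u=u_1+u_2$ with $u_1=u\eta$, and let $v_1,v_2$ be their $s$-harmonic extensions so that $v=v_1+v_2$. Both pieces will be analyzed through the explicit Poisson representation $v_i=P\ast u_i$ recalled in Section~\ref{Sec:ExtensionProblem}.

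For $v_2$, the data $u_2=u(1-\eta)$ vanishes on a neighborhood of the projection of $A$, and since $u\equiv 0$ outside $B_1$, its support lies at distance at least some $\delta>0$ from this projection. Differentiating the Poisson kernel and using the elementary bound $y^{2s}\leq(|x-z|^2+y^2)^s$ give
\begin{align*}
|\nabla_x P(x-z,y)|\leq C|x-z|^{-(n+1)}\qquad\text{whenever }|x-z|\geq\delta,
\end{align*}
which integrated against $|u|\in L^1(B_1)$ yields $|\nabla_x v_2(x,y)|\leq C\norm{u}_{L^1(B_1)}$ for every $(x,y)\in A$.

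For $v_1$, by Remark~\ref{Remark:UniformEstimatesInLambda} the solution $u$ belongs to $C^{2,\alpha}_{\mathrm{loc}}(B_1\setminus\{0\})$, and hence $u_1=u\eta$ is smooth and compactly supported in $B_1\setminus\{0\}$. Moving the $x$-derivative in the Poisson formula onto the data via integration by parts gives $\partial_{x_i}v_1=P\ast\partial_{z_i}u_1$, and using $\int P(\cdot,y)\,dz=1$ leads to
\begin{align*}
\norm{\nabla_x v_1}_{L^\infty(A)}\leq\norm{\nabla(u\eta)}_{L^\infty(\R^n)}\leq C_\eta\bpar{\norm{u}_{L^\infty(B_{7/8}\setminus B_{3/8})}+\norm{\nabla u}_{L^\infty(\mathrm{supp}\,\eta)}}.
\end{align*}

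The principal obstacle is the last term: a quantitative bound on $\nabla u$ on the annular support of $\eta$ purely in terms of the constants appearing in the statement. Here the radial monotonicity of $u$ plays the decisive role: since $u$ is radially decreasing and vanishes outside $B_1$, the hypothesis $\norm{u}_{L^\infty(B_{7/8}\setminus B_{3/8})}\leq M$ automatically upgrades to the pointwise bound $u\leq M$ on all of $\R^n\setminus B_{3/8}$. Consequently, for every $x_0\in\mathrm{supp}\,\eta$ the ball $B_{\rho_0}(x_0)$ of a fixed radius $\rho_0$ is contained in $\{|x|>3/8\}$, where $u$ is bounded by $M$ and $f(u)$ by $\sup_{[0,M]}f$. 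Applying the interior estimates for $(-\Delta)^s$ (Corollaries~2.3 and~2.5 of \cite{RosOtonSerra-Regularity}), with the $L^1$-tail controlled by $\norm{u}_{L^1(B_1)}$, first yields a uniform Hölder estimate on $u$ in $B_{\rho_0/2}(x_0)$. Iterating the implication $u\in C^\beta\Rightarrow f(u)\in C^\beta\Rightarrow u\in C^{\beta+2s}$ ---where the first step uses $\norm{f'(u)}_{L^\infty(B_{7/8}\setminus B_{3/8})}$ to transfer Hölder regularity from $u$ to $f(u)$, and the second is interior Schauder for the fractional Laplacian--- produces in finitely many steps a $C^{1,\alpha}$ bound on $u$ on $\mathrm{supp}\,\eta$ with constants of the prescribed form. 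Combining this bound with the estimates for $v_1$ and $v_2$ above completes the proof.
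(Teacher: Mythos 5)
Your route is genuinely different from the paper's: you reduce everything to a quantitative interior gradient bound for $u$ on an annulus via a cutoff decomposition and the Poisson formula, whereas the paper differentiates the extension problem, uses radial monotonicity to give $-v_{x_i}$ a sign near points on the diagonal of the first orthant, applies the Neumann--Harnack inequality of Cabr\'e--Sire (Lemma~\ref{Lemma:HarnackInequalityNeumann}) to $-v_{x_i}$, and controls $\inf(-v_{x_i})$ by $\norm{v}_{L^\infty}$ through an integration by parts plus Lemma~\ref{Lemma:BoundLInfinityExtension}. The treatment of $v_2$ and the reduction for $v_1$ are fine, but your proof of the key step has a genuine gap in the dependence of the constant, which is the whole point of the proposition (it is what makes the estimate uniform in $\lambda$ later on). Your bootstrap constants involve $f(u)$, not only $f'(u)$: you explicitly bound $f(u)$ by $\sup_{[0,M]}f$, and both the initial interior H\"older estimate and every Schauder step for $\fraclaplacian u=f(u)$ carry $\norm{f(u)}_{L^\infty}$ (or $C^\beta$ norms of $f(u)$) on slightly larger balls. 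The statement allows only $\norm{f'(u)}_{L^{\infty}(B_{7/8}\setminus B_{3/8})}$, and $\sup_{[0,M]}f$ is not controlled by the listed quantities: the mean value theorem controls oscillations of $f(u)$, not its size. This could be repaired (for instance, testing the weak formulation against a fixed nonnegative $\zeta$ supported in the annulus bounds one value $f(u(x_1))$ by $\norm{u}_{L^1}\norm{\fraclaplacian \zeta}_{L^\infty}$ plus an oscillation term), but that step is missing; the paper's Harnack argument avoids the issue entirely, since only the zeroth-order coefficient $f'(u)$ ever enters.

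A second gap is in the quantitative use of the interior estimates: Corollaries~2.3 and~2.5 of \cite{RosOtonSerra-Regularity} bound local H\"older norms in terms of the global norms $\norm{u}_{L^\infty(\R^n)}$, respectively $\norm{u}_{C^\beta(\R^n)}$, which you do not have --- $u$ is only $H^s\cap L^1$ and may be unbounded at the origin. The phrase ``with the $L^1$-tail controlled by $\norm{u}_{L^1(B_1)}$'' needs an actual localization argument (e.g.\ estimating $u\chi$ with $\chi$ a cutoff vanishing near the origin and outside $B_1$, and bounding the commutator $\fraclaplacian\bpar{u(\chi-1)}$ on the annulus by $\norm{u}_{L^1(B_1)}$); as cited, the corollaries do not apply. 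Two smaller points should also be addressed: the iteration $u\in C^\gamma\Rightarrow f(u)\in C^\gamma\Rightarrow u\in C^{\gamma+2s}$ must be stopped at exponents $\gamma\leq 1$ (otherwise $C^\gamma$ bounds on $f(u)$ require $f''(u)$, which is not among the allowed quantities) and must avoid integer exponents, and the cutoff $\eta$ must be supported compactly inside $B_{7/8}\setminus\overline{B_{3/8}}$ so that all balls used in the interior estimates stay where $\norm{u}_{L^\infty}$ and $\norm{f'(u)}_{L^\infty}$ are controlled (your upgrade ``$u\leq M$ on $\{|x|>3/8\}$'' does not extend to $f'(u)$ beyond the annulus). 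With these repairs your scheme would yield the estimate, and it has the interest of not using stability or the Harnack inequality, but as written it proves a version of the proposition with a strictly larger dependence set and relies on estimates that are not justified in the stated generality.
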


The rest of this section is devoted to prove Lemma~\ref{Lemma:Estimate for v_i and v_y}, Proposition~\ref{Prop:GradientEstimate} and Proposition~\ref{Prop:HorizontalGradientEstimateRing}. We start with the proof of the first lemma, which only relies on the Poisson formula for the $s$-harmonic extension of $u$.

\begin{proof}[Proof of Lemma~\ref{Lemma:Estimate for v_i and v_y}]
	Since $u$ has compact support in $\overline{B_1}$, by the Poisson formula we have
	$$
	v(x,y) = P * u = P_{n,s} \int_{B_1} \dfrac{y^{2s}}{\bpar{|x-z|^2 + y^2}^{\frac{n+2s}{2}}} u(z) \d z\,.
	$$
	If we differentiate the previous expression with respect to $x_i$, $i=1,\ldots,n$, we get
	$$
	|v_{x_i}| \leq C \norm{u}_{L^\infty (B_1)} y^{2s} \int_{B_1} \dfrac{|x_i - z_i|}{\bpar{|x-z|^2 + y^2}^{\frac{n+2+2s}{2}}} \d z \,.
	$$
	Now, on the one hand we use that $|x| > 2$ to see that 
	$$
	|x_i - z_i| \leq |x| + 1 \leq 2 |x| \leq 2r\,.
	$$
	On the other hand, 
	$$
	\dfrac{1}{\bpar{|x-z|^2 + y^2}^{\frac{n+2+2s}{2}}} \leq \dfrac{1}{\bpar{(|x|-1)^2 + y^2}^{\frac{n+2+2s}{2}}} \leq  \dfrac{4^{\frac{n+2+2s}{2}}}{\bpar{|x|^2 + y^2}^{\frac{n+2+2s}{2}}} = \dfrac{C}{r^{n + 2 + 2s}}\,,
	$$
	where in the first inequality we have used that $|x-z| \geq |x| - 1$ and in the second one, that $4|x-1|^2 \geq |x|^2$ if $|x| > 2$. Combining all this we get the  estimate \eqref{Eq:EstimateForv_i}.
	
	The proof for $v_y$ is completely analogous.
\end{proof}

We deal now with estimates for weak solutions. We start with the proof of Proposition~\ref{Prop:GradientEstimate}, establishing a gradient estimate for $v$ (the $s$-harmonic extension of $u$) in sets which are far from $y=0$.  To establish it we follow the ideas of Proposition~4.6 of \cite{CabreSireI}, but with a careful look on the right-hand side of the estimates.

\begin{proof}[Proof of Proposition~\ref{Prop:GradientEstimate}]
Let $(x_0, y_0) \in \R_+^{n+1}$ with $y_0 > 0$ and note that $v$ satisfies the equation $\div(y^a \nabla v) =0$ in $B_{y_0 / 2} (x_0, y_0)$. We perform the scaling $\overline{v}( \overline x , \overline y) = v(x_0 + y_0 \overline x, y_0 \overline y)$ and then $\overline{v}$ satisfies  $\div(\overline y^a \nabla \overline{v}) =0$ in $B_{1/ 2} (0, 1)$. Since $\overline y \in (1/2,3/2)$ in this ball, $\overline{v}$ satisfies a uniformly elliptic equation and we can use classical interior estimates for the gradient (see \cite{GilbargTrudinger}, Corollary 6.3) to obtain
$$
||\nabla \overline{v} ||_{L^{\infty}(B_{1/ 4}(0,1))} \leq C || \overline{v}  ||_{L^{\infty}(B_{1/2}(0,1))}\,,
$$
for a constant $C$ depending only on $n$.
Undoing the scaling we have
$$
|\nabla v (x_0, y_0)| \leq \dfrac{1}{y_0}||\nabla \overline{v} ||_{L^{\infty}(B_{1/ 4}(0,1))} \leq \dfrac{C}{y_0} || \overline{v}  ||_{L^{\infty}(B_{1/2}(0,1))} = \dfrac{C}{y_0} || v ||_{L^{\infty}(B_{y_0/2}(x_0, y_0))} \,.
$$

Finally, we estimate $|| v ||_{L^{\infty}(B_{y_0/2}(x_0, y_0))}$. Recall that $v = P * u$ and we can bound $P(x,y)$ by $P_{n,s}/y^n$ for every $y > 0$. Then,
$$
|v(x,y)| \leq \! \int_{\R^n} \! P(x-z, y)|u(z)| \d z \leq \dfrac{P_{n,s}}{y^n}  \! \int_{\R^n} \! |u(z)| \d z = \dfrac{P_{n,s}}{y^n} ||u||_{L^{1}(\R^n)} \ \ \textrm{ for every } y > 0 \,.
$$
Combining this with the previous estimate, we get \eqref{Eq:GradientEstimate}.
\end{proof}

The estimate given by Proposition~\ref{Prop:GradientEstimate} is useful to bound quantities far from $\{y=0\}$. However, in the proofs of Proposition~\ref{Prop:IntegrabilityHalfBall} and Theorem~\ref{Th:RadialMainTheorem} we also need to bound quantities up to $\{y=0\}$. This is done thanks to Proposition~\ref{Prop:HorizontalGradientEstimateRing}. To prove it we need two preliminary results, which are estimates in half-balls of $\R^{n+1}_+$. Regarding such sets, we use the notation
$$
B_{R}^+ = \setcond{(x,y)\in \R^{n+1}_+}{|(x,y)|< R},
$$
$$
\Gamma_{R}^0 = \setcond{(x,0)\in \partial \R^{n+1}_+}{|x|< R}.
$$
We also write $B_{R}^+(x_0)$ and $\Gamma_{R}^0(x_0)$ in order to denote that the center of the balls is $(x_0,0)$ and not the origin.

The first lemma we need is the following. It is used to bound the $L^\infty$ norm of $v$ in a half-ball $B_{R}^+$ by some quantities that only refer to the trace of $v$ on $\R^n$, $u$.

\begin{lemma}
\label{Lemma:BoundLInfinityExtension}
Let $s\in (0,1)$ and $u \in L^1(\R^n) \cap L^{\infty}_{\mathrm{loc}}(\R^n)$. Let $v$ be the $s$-harmonic extension of $u$ given by \eqref{Eq:ExtensionProblemDirichlet}. Then,
\begin{equation}
\label{Eq:BoundLInfinityExtension}
\norm{v}_{L^{\infty}(B_{R}^+)} \leq C \bpar{\norm{u}_{L^{\infty}(\Gamma_{2R}^0)} + \norm{u}_{L^{1}(\R^n)}}\,,
\end{equation}
where $C$ is a constant depending only on $n$, $s$ and $R$.
\end{lemma}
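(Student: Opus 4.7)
The plan is to invoke the Poisson representation for the $s$-harmonic extension,
$$
v(x,y) = \int_{\R^n} P(x-z,y)\, u(z)\, \d z, \qquad P(x,y) = P_{n,s}\,\frac{y^{2s}}{(|x|^2+y^2)^{(n+2s)/2}},
$$
and split the integration domain as $\R^n = \Gamma_{2R}^0 \cup (\R^n\setminus \Gamma_{2R}^0)$, estimating each piece separately. For the boundary case $y=0$ we have $v(x,0)=u(x)$, and since $|x|<R<2R$ the bound is immediate; so in what follows I fix $(x,y)\in B_R^+$ with $y>0$.

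For the near contribution, I would use the fact that $P(\,\cdot\,,y)$ has unit mass on $\R^n$ for every $y>0$ together with the pointwise control $|u(z)|\leq \norm{u}_{L^\infty(\Gamma_{2R}^0)}$ on $\{|z|<2R\}$ to get
$$
\Biggl|\int_{|z|<2R} P(x-z,y)\,u(z)\,\d z\Biggr| \leq \norm{u}_{L^\infty(\Gamma_{2R}^0)} \int_{\R^n} P(x-z,y)\,\d z = \norm{u}_{L^\infty(\Gamma_{2R}^0)}.
$$

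For the far contribution, for any $(x,y)\in B_R^+$ and $|z|\geq 2R$, the triangle inequality gives $|x-z|\geq |z|-|x|\geq |z|-R\geq |z|/2$, while $y\leq R$. Plugging these into the definition of the Poisson kernel yields
$$
P(x-z,y) \leq P_{n,s}\,\frac{R^{2s}}{(|z|/2)^{\,n+2s}} \leq C(n,s,R),
$$
where the last step uses $|z|\geq 2R$ to bound $|z|^{-(n+2s)}\leq (2R)^{-(n+2s)}$. Consequently,
$$
\Biggl|\int_{|z|\geq 2R} P(x-z,y)\,u(z)\,\d z\Biggr| \leq C(n,s,R)\,\norm{u}_{L^1(\R^n)}.
$$
Summing the two contributions gives the claimed estimate. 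There is no real obstacle here: the key structural point is that the Poisson kernel both integrates to one (handling the $L^\infty$ part near $x$) and decays like $|z|^{-(n+2s)}$ at infinity (handling the $L^1$ part far from $x$), and the constraint $(x,y)\in B_R^+$ ensures a uniform comparison on the far region.
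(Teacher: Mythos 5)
Your proof is correct and follows essentially the same route as the paper: the Poisson representation, splitting the integral at $|z|=2R$, using the unit mass of $P(\cdot,y)$ for the near part and a uniform kernel bound (via $|x-z|\geq R$ or equivalently $|x-z|\geq|z|/2$ together with $y<R$) for the far part. The only difference is the cosmetic choice of lower bound for $|x-z|$, which changes nothing of substance.
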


\begin{proof}
Let $(x,y) \in B_{R}^+$. By the Poisson formula,
$$
v(x,y) = \int_{\R^n} P(x-z, y) u(z) \d z\,, \quad \textrm{ where } \quad P(x, y) = P_{n,s} \dfrac{y^{2s}}{\bpar{|x|^2 + y^2}^{\frac{n+2s}{2}}}\,.
$$
Now, we split the integral into two parts:
$$
 \int_{\R^n} P(x-z, y) u(z) \d z =  \int_{\Gamma_{2R}^0} P(x-z, y) u(z) \d z +  \int_{\R^n \setminus \Gamma_{2R}^0} P(x-z, y) u(z) \d z\,.
$$
For the first term we find the estimate
$$
\int_{\Gamma_{2R}^0} P(x-z, y) u(z) \d z \leq  \norm{u}_{L^{\infty}(\Gamma_{2R}^0)} \int_{\R^n} P(x-z, y) \d z = \norm{u}_{L^{\infty}(\Gamma_{2R}^0)}\,,
$$
where we have used that $P(x,y)$ is positive and for all $y>0$ it integrates $1$ in $\R^n$.
For the second term, note that since $|x| < R$ and $|z| \geq 2R $, $|x-z| \geq R$ and therefore
$$
\dfrac{y^{2s}}{\bpar{|x-z|^2 + y^2}^{\frac{n+2s}{2}}} \leq \dfrac{y^{2s}}{\bpar{R^2 + y^2}^{\frac{n+2s}{2}}}\,.
$$
Hence, for $(x,y) \in B_{R}^+$,
$$
\int_{\R^n \setminus \Gamma_{2R}^0} P(x-z, y) u(z) \d z \leq
\int_{\R^n \setminus \Gamma_{2R}^0} P_{n,s}\dfrac{y^{2s}}{\bpar{R^2 + y^2}^{\frac{n+2s}{2}}} u(z) \d z \leq C \norm{u}_{L^{1}(\R^n)}\,,
$$
where $C$ is a constant depending only on $n$, $s$ and $R$.
\end{proof}

The second lemma we need in order to prove Proposition~\ref{Prop:HorizontalGradientEstimateRing} is a Harnack inequality:

\begin{lemma}[Lemma 4.9 of \cite{CabreSireI}]
\label{Lemma:HarnackInequalityNeumann}
Let $a\in (-1,1)$ and $\varphi \in H^{1}(B_{4R}^+, y^a)$ be a nonnegative weak solution of
$$
\beqc{\PDEsystem} 
\div(y^a \nabla \varphi) &=& 0 & \textrm{ in }  B_{4R}^+ \,,\\
\dfrac{\partial \varphi}{\partial \nu^a} + d(x)\varphi &=& 0 & \textrm{ in }  \Gamma_{4R}^0 \,,
\eeqc
$$
where $d$ is a bounded function in $\Gamma_{4R}^0$. Then, 
\begin{equation}
\label{Eq:HarnackInequalityNeumann}
\sup_{B_R^+}  \varphi \leq C \inf_{B_R^+}  \varphi \,,
\end{equation}
for some constant $C$ depending only on $n$, $a$ and $R^{1-a} \norm{d}_{L^{\infty}(\Gamma_{4R}^0)}$.
\end{lemma}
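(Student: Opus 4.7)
The plan is to prove this by Moser iteration adapted to the weighted Neumann problem, following the classical De Giorgi--Nash--Moser scheme of \cite{CabreSireI} (Section 4). Before anything else, I would rescale to $R=1$. Under the scaling $\tilde\varphi(x,y)=\varphi(Rx,Ry)$ the degenerate divergence operator is preserved, while the normal derivative $\partial_{\nu^a}=-\lim_{y\downarrow 0}y^a\partial_y$ rescales by $R^{a-1}$, so the boundary condition becomes $\partial_{\nu^a}\tilde\varphi+R^{1-a}d(Rx)\tilde\varphi=0$. This is precisely why the dependence of the final constant is on the combination $R^{1-a}\norm{d}_{L^\infty(\Gamma_{4R}^0)}$, and it reduces the task to proving the result on $B_1^+$ with $d$ replaced by a bounded potential $\tilde d$.

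The two basic analytic ingredients, both valid for the $A_2$ weight $y^a$ with $a\in(-1,1)$, are the weighted Sobolev inequality on half-balls and the weighted trace inequality $\norm{\tr\psi}_{L^2(\Gamma_R^0)}\le C\norm{\psi}_{H^1(B_R^+,y^a)}$ (cf.\ \eqref{Eq:TraceInequalityHalfSpace}). With these in hand, the upper bound portion (the $L^\infty$ estimate) is standard Moser iteration: testing the weak formulation of the equation with $(\varphi+\epsilon)^{\beta}\eta^2$ for $\beta\ge 1$ and a cutoff $\eta$, and using the boundary condition $\partial_{\nu^a}\varphi=-d(x)\varphi$ to rewrite the boundary flux, produces
$$
\int_{B_{4}^+} y^a\bigl|\nabla\bigl[(\varphi+\epsilon)^{(\beta+1)/2}\eta\bigr]\bigr|^2\,dx\,dy \le C(\beta)\!\int_{B_{4}^+}\! y^a(\varphi+\epsilon)^{\beta+1}|\nabla\eta|^2 + C\norm{\tilde d}_{L^\infty}\!\int_{\Gamma_{4}^0}\!(\varphi+\epsilon)^{\beta+1}\eta^2.
$$
The boundary integral is controlled by the trace inequality together with a small-constant/large-constant absorption, and the weighted Sobolev inequality upgrades $L^{\beta+1}$ to $L^{\gamma(\beta+1)}$ for some $\gamma>1$. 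Iterating on nested half-balls shrinking from $B_2^+$ to $B_1^+$ yields $\sup_{B_1^+}\varphi\le C\norm{\varphi}_{L^2(B_2^+,y^a)}$.

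The lower bound (weak Harnack) is the more delicate half. Using $(\varphi+\epsilon)^{-\beta}\eta^2$ as test function and iterating as above, one obtains $\inf_{B_1^+}\varphi\ge c\,\norm{\varphi}_{L^{-q}(B_2^+,y^a)}$ for any small $q>0$, where the boundary term again contributes but is again absorbed using the trace inequality. To bridge positive and negative exponents I would use the usual logarithmic test function $\log(\varphi+\epsilon)$, which via the equation and the Neumann boundary condition produces a BMO-type estimate for $\log\varphi$ with respect to the measure $y^a\,dx\,dy$; applying the John--Nirenberg inequality on spaces of homogeneous type (valid here because $y^a\,dx\,dy$ is doubling on $\R^{n+1}_+$ and the Neumann reflection across $\{y=0\}$ keeps us in that category) gives the self-improving bound that connects the negative-power Lebesgue norm of $\varphi$ to its positive-power Lebesgue norm on a slightly smaller half-ball. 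Chaining this with the $L^\infty$ estimate from the previous paragraph yields the Harnack inequality \eqref{Eq:HarnackInequalityNeumann}, with a constant depending only on $n$, $a$ and $R^{1-a}\norm{d}_{L^\infty(\Gamma_{4R}^0)}$.

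The main obstacle, as in all Moser arguments for mixed boundary problems, is the simultaneous handling of the boundary potential $d(x)\varphi$ during the logarithmic / John--Nirenberg step: one must verify that the BMO estimate for $\log\varphi$ survives the extra boundary contribution, which is why it is essential to isolate the dimensionless combination $R^{1-a}\norm{d}_{L^\infty}$ from the start and to treat the half-space geometry via even reflection in $y$ (turning it into the $A_2$-weighted interior theory of Fabes--Kenig--Serapioni, to which John--Nirenberg applies directly).
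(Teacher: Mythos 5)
This lemma is not proved in the paper at all: it is imported verbatim as Lemma 4.9 of \cite{CabreSireI}, so there is no internal argument to compare yours with, and what you have written is a self-contained (re)proof plan. Your route is the standard one, and essentially the way such Neumann--Harnack inequalities are established in the literature the paper relies on: reduce by scaling to $R=1$ (you correctly observe that $\partial_{\nu^a}$ scales like $R^{a-1}$, which is exactly why the constant depends on the dimensionless quantity $R^{1-a}\norm{d}_{L^{\infty}(\Gamma_{4R}^0)}$), run Moser iteration on powers of $\varphi$ using the weighted Sobolev inequality for the $A_2$ weight $y^a$ of Fabes--Kenig--Serapioni, absorb the boundary potential through a local trace inequality, and bridge positive and negative exponents with the logarithmic test function plus John--Nirenberg for the doubling measure $y^a\,dx\,dy$ (even reflection across $\{y=0\}$ putting you in the homogeneous-space setting). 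Two points should be made explicit before this counts as a proof rather than a plan. First, the trace inequality you invoke, \eqref{Eq:TraceInequalityHalfSpace}, is the global $H^s$-seminorm bound on the whole half-space; what the absorption step actually requires is the local $\epsilon$-form on half-balls, $\int_{\Gamma_r^0}\psi^2\,dx\le \epsilon\int_{B_r^+}y^a|\nabla\psi|^2\,dx\,dy+C_\epsilon\int_{B_r^+}y^a\psi^2\,dx\,dy$, which is true for $a\in(-1,1)$ but is a different statement and must be quoted or proved in that form (the zero-order term is essential, and its constant is where the $\norm{d}_{L^\infty}$-dependence enters the iteration). Second, in the logarithmic/John--Nirenberg step the boundary term $\int_{\Gamma}|d|\,\eta^2$ must be carried through the Caccioppoli estimate for $\log\varphi$ and through the weighted Poincar\'e inequality; you flag this, and boundedness of $d$ indeed suffices. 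With those two items spelled out, no step of your outline would fail; it reproduces, rather than replaces, the argument behind the cited Lemma 4.9.
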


\begin{remark}
	Since the operator $\div (y^a \nabla \cdot)$ is invariant under translations in the $x$ variable, the two previous results also hold for half-balls not necessarily centered at the origin.
\end{remark}

Once we have the two previous lemmas, we can establish  Proposition~\ref{Prop:HorizontalGradientEstimateRing}:

\begin{proof} [Proof of  Proposition~\ref{Prop:HorizontalGradientEstimateRing}]
	
	We first claim that, for $x_0\in \setcond{x\in \R^n}{1/2  \leq |x| \leq 3/4}$, we have
	\begin{equation}
	\label{Eq:GradientEstimateHalfBalls}
	\norm{\nabla_x v}_{L^\infty (B^+_{1/32}(x_0))} \leq C\,,
	\end{equation}
	with $C$ depending only on $n$, $s$, $\norm{u}_{L^1(B_1)}$,  $\norm{u}_{L^{\infty}(\Gamma^0_{1/8}(x_0))}$ and $\norm{f'(u)}_{L^{\infty}(\Gamma^0_{1/8}(x_0))}$. Assuming that the claim is true we complete the proof. First, we use a standard covering argument to deduce
	$$
	\norm{\nabla_x v}_{L^\infty ((\Gamma^0_{3/4}\setminus \overline{ \Gamma^0_{1/2}}) \times (0,1/32))} \leq C\,,
	$$
	with a constant $C$ depending on the same quantities as the previous one. Then, we use Proposition~\ref{Prop:GradientEstimate} to bound $\nabla_x v$ in $(\Gamma^0_{3/4}\setminus \overline{ \Gamma^0_{1/2}}) \times (1/32,1)$. Combining these last two estimates we deduce \eqref{Eq:HorizontalGradientEstimateHalfBalls}.
	
	Let us show \eqref{Eq:GradientEstimateHalfBalls}. By the radial symmetry of the domain, it is enough to prove the estimate for a point $x_0$ of the form $x_0 = (c, c, \ldots , c)$ with $c$ such that $1/2 \leq |x_0| \leq 3/4$. Under these assumptions, the ball $\Gamma^0_{1/8}(x_0)$ is inside the first orthant of $\R^n$, i.e., $\Gamma^0_{1/8}(x_0) \subset \{x_i \geq 0,\ i = 1,\ldots, n\}$, and there we have $u_{x_i} < 0$ for all $i = 1,\ldots,n$ (and the same happens for $v_{x_i}$).
	Since the equation that $v$ satisfies is invariant under translations in the $x$ variable, we can assume from now on that the ball is centered at the origin, so we write just $B_{1/8}^+$.
	
	Then, differentiating the equation that $v$ satisfies in $B_{1/8}^+$, for all  $i = 1,\ldots,n$ we get 
	$$
	\beqc{\PDEsystem} 
	\div(y^a \nabla v_{x_i}) &=& 0 & \textrm{ in }  B_{1/8}^+ \,,\\
	\dfrac{\partial v_{x_i}}{\partial \nu^a}  -f'(u)v_{x_i} &=& 0 & \textrm{ in }  \Gamma_{1/8}^0 \,.
	\eeqc
	$$
	At this point we use Lemma~\ref{Lemma:HarnackInequalityNeumann} with $\varphi = -v_{x_i} \geq 0$ and $d = -f'(u)$, obtaining
	$$
	\sup_{B^+_{1/32}}  - v_{x_i} \leq C \inf_{B^+_{1/32}}  - v_{x_i}\,,
	$$
	with a constant $C$ depending only on $n$, $s$ and $\norm{f'(u)}_{L^{\infty}(\Gamma_{1/8}^0)}$.
	
	We bound $\inf_{B^+_{1/32}} - v_{x_i}$ by $C\norm{v}_{L^{\infty}(B_{1/32}^+)}$ with a constant $C$ depending only on~$n$. To see this, we use integration by parts:
	$$
	\inf_{B_{1/32}^+}  - v_{x_i} \leq \norm{v_{x_i}}_{L^{1}(B_{1/32}^+)} = \left | \int_{B_{1/32}^+} v_{x_i} \d x \right | = \left | \int_{\partial (B_{1/32}^+)} v \nu_i \d \sigma \right | \leq C\norm{v}_{L^{\infty}(B_{1/32}^+)}. 
	$$
	
	Finally, \eqref{Eq:GradientEstimateHalfBalls} is obtained by estimating $\norm{v}_{L^{\infty}(B_{1/32}^+)}$ in terms of $u$, the trace of $v$ on $\R^n$, using Lemma~\ref{Lemma:BoundLInfinityExtension}.
\end{proof}

Obviously, in the definition of the set $A$ of Proposition~\ref{Prop:HorizontalGradientEstimateRing} we can replace $B_{3/4} \setminus \overline{ B_{1/2}}$ by every other annuli  $B_L\setminus \overline{B_l}$ with $0<l<L<1$, and we can also replace $(0,1)$ by any other open interval. Then, estimate \eqref{Eq:HorizontalGradientEstimateHalfBalls} also holds for $A = (B_L\setminus \overline{B_l}) \times (0,T)$ with a different constant $C$ which depends on $l$ and $L$.

\begin{remark}
\label{Remark:EstimateValidForBothOperators}
Let $u\geq 0$ be a bounded solution of \eqref{Eq:SemilinearProblemB1} and let $w\geq0$ be a bounded solution of the problem
$$
\beqc{\PDEsystem}
    A^s w &= & f(w) & \text{in} \quad B_1\,,\\
    w &= &0 &   \text{on} \quad \partial B_1\,.
\eeqc
$$
Then, in the half-ball $B^+_{1/8}$ (or in every half-ball with base strictly contained in $B_1$), both $u$ and $w$ satisfy the same degenerate elliptic problem. Therefore, the proof of Proposition~\ref{Prop:HorizontalGradientEstimateRing} can be applied without any change to $w$. Thus, we obtain an estimate for $\nabla_x w$ that can be used in the arguments of \cite{CapellaEtAl} in order to complete the proof of their main theorem (see Remarks~\ref{Remark:MissingTermCapellaEtAl} and \ref{Remark:MissingEstimateCapellaEtAl}).
\end{remark}

\section{Radial symmetry and monotonicity of stable solutions}
\label{Sec:RadialSymmetry}

In this section we establish the radial symmetry of bounded stable solutions and that, when they are not identically zero, they are either increasing or decreasing.

As it is well known, when $u\geq 0$ is a bounded solution of \eqref{Eq:SemilinearProblemB1}, then $u$ is radially symmetric and decreasing ($u_\rho < 0$ for $1>\rho > 0$). This was proved in \cite{Birkner2005} using the celebrated moving planes method.  Furthermore, by the Poisson formula, the $s$-harmonic extension of $u$ is also radially symmetric in the horizontal direction, that is, it only depends on $\rho$ and $y$. Moreover, $v_\rho < 0$ for $\rho > 0$.

In the moving planes argument, the hypothesis of $u\geq 0$ cannot be omitted, since there can be changing-sign solutions of \eqref{Eq:SemilinearProblemB1} that are not radially symmetric. Nevertheless, this is not the case for stable solutions, as the next result states:

\begin{proposition}
	\label{Prop:RadialSymmetryAndMonotonicity}
	Let $n\geq 2$ and let $u$ be a bounded stable solution of \eqref{Eq:SemilinearProblemB1} with $f\in C^2$. Then, $u$ is radially symmetric. Moreover, if $u$ is not identically zero then $u$ is either increasing or decreasing in $B_1\setminus\{0\}$.	
\end{proposition}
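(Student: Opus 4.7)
The plan is to apply the stability inequality to angular derivatives of $u$ for the radial symmetry part, and to the horizontal radial derivative of the $s$-harmonic extension for the monotonicity part.

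For radial symmetry, I would consider, for each pair $1\leq i<j\leq n$, the angular derivative $T_{ij}u := x_i\partial_{x_j}u - x_j\partial_{x_i}u$. Since $u$ is bounded and $f\in C^2$, the interior regularity recalled in Remark~\ref{Remark:UniformEstimatesInLambda} gives $u\in C^{2,\alpha}_{\mathrm{loc}}(B_1)$, so $T_{ij}u$ is classical in $B_1$ and identically zero in $\R^n\setminus B_1$. A key preliminary observation is that the potential $\delta^{s-1}$ singularity of $\nabla u$ near $\partial B_1$ is purely \emph{normal} (the boundary behaviour of $u$ is like $c\,\delta^s$), so $T_{ij}u$ is in fact bounded up to $\partial B_1$, ensuring $T_{ij}u\in H^s(\R^n)$ with compact support in $\overline{B_1}$. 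The rotation invariance of $(-\Delta)^s$ now yields
\[
(-\Delta)^s(T_{ij}u) = f'(u)\,T_{ij}u \quad \text{in } B_1\,.
\]
Multiplying by $T_{ij}u$ and integrating gives $[T_{ij}u]^2_{H^s(\R^n)} = \int_{B_1} f'(u)(T_{ij}u)^2\,dx$; together with the stability inequality \eqref{Eq:StabilityCondition}, this forces equality in \eqref{Eq:StabilityCondition} for the test function $\xi=T_{ij}u$. Hence $T_{ij}u$ lies in the (simple) first eigenspace of the linearised operator $(-\Delta)^s-f'(u)$ with exterior Dirichlet condition on $B_1$, which is spanned by a function of strict sign. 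Since $T_{ij}u$ is an angular derivative, its mean over every circle contained in an $(x_i,x_j)$-plane vanishes, so it cannot be of constant sign. Therefore $T_{ij}u\equiv 0$ for all $i\neq j$, and $u$ is radial.

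For monotonicity, once $u=\tilde u(\rho)$ is radial, its extension $v$ depends only on $(\rho,y)$ and $v_\rho$ is well defined for $\rho>0$. Differentiating the extension problem \eqref{Eq:ExtensionProblemDirichlet} and the boundary condition $d_s\partial_{\nu^a}v=f(u)$ in $\rho$ yields the Hardy-shifted linearised equation
\[
-\div(y^a\nabla v_\rho) + (n-1)\,\frac{y^a}{|x|^2}\,v_\rho = 0 \ \text{ in } \R^{n+1}_+\cap\{\rho>0\}\,, \qquad \partial_{\nu^a}v_\rho = \tfrac{f'(u)}{d_s}\,v_\rho \ \text{ on } (B_1\setminus\{0\})\times\{0\}\,.
\]
Thus $v_\rho$ lies in the kernel of a radial Hardy-shifted version of the extended linearised operator. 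Since the potential $(n-1)y^a/|x|^2\geq 0$, and since stability \eqref{Eq:StabilityConditionExtension}, combined with rotational symmetry and simplicity, guarantees that the unshifted operator has nonnegative first (radial) eigenvalue, the Hardy-shifted operator does too. If this first eigenvalue is strictly positive, then $v_\rho\equiv 0$, so $\tilde u$ is constant and therefore zero by the exterior Dirichlet condition, contradicting $u\not\equiv 0$. Otherwise, by simplicity, $v_\rho$ is a scalar multiple of a positive first eigenfunction, so $\tilde u'$ has constant sign on $(0,1)$ and $u$ is either increasing or decreasing, as claimed.

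The main obstacle will be the rigorous functional-analytic set-up: verifying $T_{ij}u\in H^s(\R^n)$ through the tangential cancellation of the $\delta^{s-1}$ boundary singularity (using sharp boundary regularity for the fractional Dirichlet Laplacian), and establishing the eigenvalue interpretation for the Hardy-shifted operator in the weighted half-space, where one must control $v_\rho$ both near $\rho=0$ and as $y\to\infty$. The decay estimates in Propositions~\ref{Prop:GradientEstimate} and~\ref{Prop:HorizontalGradientEstimateRing} are tailor-made to handle the latter, while classical interior estimates for $\div(y^a\nabla\cdot)$ handle the former.
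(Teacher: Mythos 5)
Your symmetry argument is essentially the paper's: the angular derivatives $x_iu_{x_j}-x_ju_{x_i}$ solve the linearized problem, and stability plus the sign of a principal eigenfunction forces them to vanish. (Your side claim that the $\delta^{s-1}$ boundary singularity is ``purely normal'', so that $T_{ij}u$ is bounded and in $H^s$, needs more than the standard $u/\delta^s\in C^\alpha$ boundary regularity, but this is a technical point and the overall route coincides with the paper.)

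The monotonicity part, however, has a genuine gap. You treat $v_\rho$ as an element of the kernel of a Hardy-shifted linearized operator and then invoke ``nonnegative first eigenvalue plus simplicity'' to conclude that $v_\rho$ is either zero or a multiple of a one-signed principal eigenfunction. For this spectral dichotomy to apply, $v_\rho$ must belong to the relevant energy/form space, and it does not: near $\partial B_1$ one has $u_\rho\sim\delta^{s-1}$, so $u_\rho\notin L^2(B_1)$ for $s\le 1/2$, the term $\int_{B_1}f'(u)u_\rho^2\,dx$ may be infinite, and $v_\rho$ need not have finite weighted Dirichlet energy near $\partial B_1\times\{0\}$; hence $v_\rho$ is not an admissible competitor in the Rayleigh quotient and cannot be identified with a first eigenfunction. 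Moreover, the existence and simplicity of a principal eigenfunction for the Hardy-shifted problem on $\R^{n+1}_+$ with weight $y^a$, singular potential $(n-1)/\rho^2$ and the trace-coupled boundary term is itself not established (there is no compactness, and the infimum of the shifted quotient need not be attained), so even the framework you rely on is missing. You flag the behaviour near $\rho=0$ and as $y\to\infty$, but the real obstruction is at $\partial B_1\times\{0\}$. The paper circumvents exactly this: it argues by contradiction assuming $u_\rho(\rho_\star)=0$ for some $\rho_\star\in(0,1)$, disposes of the case where $v_\rho$ has a sign by a Hopf-type lemma, and otherwise selects, by a topological (Jordan curve) argument, a connected component $\Omega$ of $\{v_\rho\neq 0\}$ with $\overline{\partial\Omega\cap B_1}\cap\partial B_1=\emptyset$, so that the localized function $v_\rho\chi_\Omega$ \emph{is} admissible in \eqref{Eq:StabilityConditionExtension}; testing stability with it and integrating by parts (justified by the decay estimates of Lemma~\ref{Lemma:Estimate for v_i and v_y} and Proposition~\ref{Prop:GradientEstimate}) yields a contradiction through the strictly positive Hardy term $(n-1)\int_\Omega y^a v_\rho^2/\rho^2$. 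Without such a localization away from $\partial B_1$, your eigenvalue argument does not go through.
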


The first part of this result is already well known (see for instance Remark~5.3 of~\cite{RosOtonSerra-Extremal}), but we will present here the proof for completeness.  Instead, to our knowledge, the second part of the proposition about the monotonicity has not been established in the nonlocal setting. In order to prove it, we follow the main ideas in the classical proof of the analogous result for the Laplacian ($s=1$), which can be found for instance in \cite{CabreCapella-Radial, Dupaigne}. 
The argument in the local case is quite simple: one must show that if $u_\rho$ is not identically zero in $B_1$, then it cannot vanish in $B_1\setminus \{0\}$. As a consequence, either $u_\rho > 0$ or $u_\rho < 0$ in $B_1\setminus \{0\}$. Hence, to complete the proof, we assume that there exists $\rho_\star \in (0,1)$ for which $u_\rho(\rho_\star) = 0$ and $u_\rho \not \equiv 0$ in $\omega := B_{\rho_\star}$. Therefore, $u_\rho \chi_\omega \in H^1_0(B_1)$ and we can take it as a test function in the stability condition. Finally, we get the contradiction after an integration by parts in $\omega$.

Adapting the previous argument to the nonlocal case using the extension problem is not a straightforward task. To do it, we choose $v_\rho \chi_{\Omega}$ as a test function in \eqref{Eq:StabilityConditionExtension} to arrive at a contradiction. Here, $v$ is the $s$-harmonic extension of $u$ and $\Omega \subseteq \R_+^{n+1}$ is a certain connected component of the set $\{v_\rho \neq 0\}$ that must be chosen appropriately to satisfy the following condition. We need that $\overline{\partial \Omega \cap B_1} \cap \partial B_1 = \emptyset$, since this condition guarantees that $u\in C^2(\overline{\partial \Omega \cap B_1})$, a property that will be used in our arguments. Note that $u$ is not $C^2$ in a neighborhood of $\partial B_1$. Recall ---see \cite{RosOtonSerra-Regularity}--- that $u \sim \delta^s$ near $\partial B_1$, where $\delta = \dist (\cdot, \partial B_1)$. In particular, $u_\rho \notin L^2(B_1)$ for $s \leq 1/2$.
As a consequence of this, $\partial \Omega \cap B_1$ may differ from $B_{\rho_\star}$ (where $u_\rho (\rho_\star) = 0$) in contrast with the local case.

In addition, $\Omega$ may turn to be unbounded. For this reason we need Lemma~\ref{Lemma:Estimate for v_i and v_y} and Proposition~\ref{Prop:GradientEstimate} to control the decay at infinity of $\nabla v$. This is necessary in order to perform correctly an integration by parts in $\Omega$. 

We proceed now with the detailed proof.

\begin{proof}[Proof of Proposition~\ref{Prop:RadialSymmetryAndMonotonicity}]

	We first show the symmetry of $u$, following \cite{RosOtonSerra-Extremal}. For $i\neq j$ and $i,j = 1,\ldots,n$, consider $w = x_i u_{x_j} - x_j u_{x_i}$, which is a function defined in $\R^n$. Define its extension in $\R^{n+1}_+$ as $W = x_i v_{x_j} - x_j v_{x_i}$, where $v$ is the $s$-harmonic extension of $u$.
	Then,
	\begin{align*}
	\div(y^a \nabla W) &= y^a \Delta_x W + \partial_y (y^a W_y)\\
	&= y^a (x_i \Delta_x v_{x_j} - x_j \Delta_x v_{x_i} ) + \partial_y (y^a x_i (v_y)_{x_j} - y^a x_j (v_y)_{x_i}) \\
	&= x_i (\div(y^a \nabla v_{x_j})) - x_j (\div(y^a \nabla v_{x_i})) \\
	&=0 
	\end{align*}
	and
	\begin{align*}
	d_s \dfrac{\partial W}{\partial \nu^a} 
	&= x_i d_s \dfrac{\partial v_{x_j}}{\partial \nu^a}  - x_j d_s \dfrac{\partial v_{x_i}}{\partial \nu^a} \\
	&=  x_i f'(u) u_{x_j} - x_j f'(u) u_{x_i} \\
	&= f'(u) w\,.
	\end{align*}
	This means that $W$ is a solution of the linearized problem
	$$
	\beqc{\PDEsystem}
	\div (y^a \nabla W ) &= & 0 & \text{in } \R^{n+1}_+ \,,\\
	d_s \dfrac{\partial W}{\partial \nu^a}  &= & f'(u) w &  \text{in }  B_1\subset \R^n\,.
	\eeqc
	$$
	Equivalently, the trace of $W$ on $\R^n$, $w$, solves
	$$
	\beqc{\PDEsystem}
	\fraclaplacian w  &= & f'(u)w & \text{in } B_1 \,,\\
	w &= & 0 &  \text{in }  \R^n \setminus B_1\,.
	\eeqc
	$$
	
	Let us prove that $w\equiv 0$ for every $i\neq j$, $i,j = 1,\ldots,n$. This leads to the radial symmetry of $u$ since all its tangential derivatives are zero.
	
	Due to the stability of $u$, we have that $\lambda_1 (\fraclaplacian - f'(u) ; B_1) \geq 0$, that is, the first eigenvalue of the operator $\fraclaplacian - f'(u)$ in $B_1$ with zero Dirichlet data outside $B_1$ is nonnegative. Here we have to consider two cases. If $\lambda_1 (\fraclaplacian - f'(u) ; B_1) > 0$, then $w \equiv 0$. On the contrary, if $\lambda_1 (\fraclaplacian - f'(u) ; B_1) = 0$ then $w=K\phi_1$, that is, $w$ is a multiple of the first eigenfunction $\phi_1$, which is positive. But since $w$ is a tangential derivative, it cannot have constant sign along a sphere $\{|x| = R\}$ for $R\in(0,1)$. Hence, $K=0$, which leads to $w\equiv 0$. Thus, $u$ is radially symmetric.
	
	We prove now the second part of the result. In order to establish the monotonicity of $u$, it is enough to see that if $u_\rho \not \equiv 0$ in $B_1$, then $u_\rho$ does not vanish in $B_1\setminus \{0\}$. If this is shown to be true, then either $u_\rho > 0$ or $u_\rho < 0$ in $B_1\setminus \{0\}$. 
	
	Arguing by contradiction, we assume that there exists $\rho_\star \in (0,1)$ such that $u_\rho (\rho_\star) = 0$.	Let
	$$
	A^+ = \{v_\rho > 0\} \quad \textrm{ and } A^- = \{v_\rho < 0\}\,.
	$$
	Assume first that one of these two open sets is empty, for instance $A^- = \emptyset$ (the other case is analogous). Then, we find a contradiction with Hopf's lemma. Indeed, since $A^- = \emptyset$, $v_\rho$ satisfies
	$$
	\beqc{\PDEsystem}
	\div(y^a \nabla v_\rho) &=& y^a \dfrac{n-1}{\rho^2} v_\rho & \textrm{ in } \R_+^{n+1}\,, \\
	v_\rho &\geq & 0 & \textrm{ in } \R_+^{n+1}\,, \\
	\dfrac{\partial v_\rho}{\partial \nu^a} &=& f'(u) u_\rho& \textrm{ in } B_1\,.
	\eeqc
	$$
	At the same time, $v_\rho (\rho_\star,0) = u_\rho(\rho_\star) = 0$ and thus
	$$\dfrac{\partial v_\rho}{\partial \nu^a} (\rho_\star, 0) = f'(u(\rho_\star)) u_\rho(\rho_\star) = 0\,.
	$$
	This contradicts the Hopf's lemma for the operator $\tilde{L}_a w := \div(y^a \nabla w) - y^a c(x)w$, with $c = (n-1)/\rho^2$, which can be proved with the same arguments as in Proposition~4.11 of~\cite{CabreSireI}.

	Assume now that $A^+ \neq \emptyset$  and $A^- \neq \emptyset$. Our goal is to get a contradiction with the stability of $u$. For this, we need to define a set $\Omega \subset \R_+^{n+1}$ for which $v_\rho \chi_{\Omega} \in H^1(\R_+^{n+1}, y^a)$ ---note that this forces $v_\rho \equiv 0$ on $\partial \Omega \cap \{y>0\}$--- and, thus, $v_\rho \chi_{\Omega}$ is a valid test function in the stability condition. The resulting relation must then be integrated by parts in $\Omega$. This will require the integral 
	$$
	\int_{\partial \Omega \cap B_1} f'(u)u_\rho^2 \zeta_\varepsilon \d x\,, 
	$$
	to be finite, where $\zeta_\varepsilon$ is a smooth function. Now, since $u_\rho \notin L^2(B_1)$ for $s \leq 1/2$, we need to choose $\Omega$ such that $\overline{\partial\Omega \cap B_1} \cap \partial B_1$ is empty and, therefore, $u\in C^2(\overline{\partial\Omega \cap B_1})$.
	
	To accomplish this, we first make the following
	
	\textbf{Claim 1: } There exists a set $\Omega \subset \R_+^{n+1}$ (perhaps unbounded) such that $v_\rho$ does not vanish in $\Omega$, $v_\rho = 0$ on $\partial \Omega \cap \{y > 0\}$ and such that
	$$
	\overline{\partial \Omega \cap B_1} \cap \partial B_1 = \emptyset\,.
	$$
	
	To show this, we define
	$$
	A^+_0 = \partial A^+ \cap B_1 \quad \textrm{ and }A^-_0 = \partial A^- \cap B_1\,.
	$$ 	
	Note that if $u_\rho \leq 0$ in $B_1$, the Poisson formula yields $v_\rho \leq 0$ in $\R_+^{n+1}$. Similarly, $u_\rho \geq 0$ in $B_1$ ensures that $v_\rho \geq 0$ in $\R_+^{n+1}$. Therefore, since  $A^+ \neq \emptyset$ and $A^- \neq \emptyset$, we also have $A^+_0 \neq \emptyset$  and $A^-_0 \neq \emptyset$. 
	
	Since $v$ is radially symmetric in the horizontal variables, we can identify the sets $A^+$, $A^-$, $A^+_0$ and $A^-_0$ with their projections into $\R^2_{++} := \{(\rho,y) \in \R^2 : \rho,y \geq 0\}$ and recover the original sets by a revolution about the $y$-axis. With this identification in mind, let $(\rho_-,0) \in A^-_0$ and $(\rho_+,0) \in A^+_0$. Without loss of generality, we can assume that $\rho_- < \rho_+$ ---the argument in the other case is analogous. Let $\Omega_-$ be the connected component of $A^-$ whose closure contains $(\rho_-,0)$, and let $\Omega_+$ be the connected component of $A^+$ whose closure contains $(\rho_+,0)$. Now, we distinguish two cases.

	\textbf{Case 1: $\overline{\partial\Omega_- \cap B_1} \cap \partial B_1 = \emptyset $.}
	
	In this case we define
	$$
	\Omega:= \Omega_-\,.
	$$

	\textbf{Case 2:  $\overline{\partial\Omega_- \cap B_1} \cap \partial B_1 \neq \emptyset $.}
	
	In this case, a simple topological argument yields that  $\overline{\partial\Omega_+ \cap B_1} \cap \partial B_1 = \emptyset$. Indeed, under the assumption of Case~2, there exists $(\rho_-',0) \in \partial\Omega_- \cap B_1$ as close as we want to $\partial B_1$ and such that $\rho_- < \rho_+ < \rho_-' < 1$. Since $\Omega_-$ is arc-connected, we can join $(\rho_-,0)$ and $(\rho_-',0)$ by a curve in $\Omega_- \cap \{y > 0\}$. By the Jordan curve theorem, the connected component $\Omega_+$, whose closure contains $(\rho_+,0)$, is bounded and satisfies  $\overline{\partial\Omega_+ \cap B_1} \cap \partial B_1 = \emptyset$.
	
	Thus, in Case~2 we define
	$$
	\Omega:= \Omega_+
	$$
	and Claim~1 is proved.
	
	To proceed, me make the following
	
	\textbf{Claim~2: } $v_\rho \chi_\Omega \in H^1(\R^{n+1}_+, y^a)$ and the following formula holds:
	\begin{equation}
	\label{Eq:IntegrationByParts}
	(n-1) d_s \int_{\Omega} y^a \dfrac{v_\rho^2}{\rho^2} \d x \d y  = - d_s \int_{\Omega} y^a |\nabla v_\rho|^2 \d x \d y  + \int_{B_1 \cap \partial \Omega} f'(u)u_\rho^2  \d x\,.
	\end{equation}
	
	To prove Claim~2, note first that $v_\rho$ satisfies the equation
	$$
	\div(y^a\nabla v_\rho) = y^a \dfrac{n-1}{\rho^2}v_\rho \quad \textrm{ in } \ \R^{n+1}_+\,.
	$$
	Take $\zeta_\varepsilon = \zeta_\varepsilon(\rho)$ a smooth cut-off function such that $\zeta_\varepsilon = 0$ in $B_\varepsilon$ and $\zeta_\varepsilon = 1$ outside $B_{2\varepsilon}$. Multiply the above equation by $d_s v_\rho(\rho,y)  \zeta_\varepsilon(\rho) \chi_{\Omega} (\rho,y)$ and integrate in $\R^{n+1}_+$. Using integration by parts and the fact that $u_\rho = 0$ in $\R^n \setminus B_1$, we get
	\begin{align}
	\label{Eq:IntegrationByPartsDetailed}
	(n-1) d_s \int_{\Omega} y^a \dfrac{v_\rho^2}{\rho^2}\zeta_\varepsilon  \d x \d y & = d_s \int_{\Omega} \div(y^a\nabla v_\rho) v_\rho \zeta_\varepsilon \d x \d y \nonumber  \\
	&= - d_s \int_{\Omega} y^a\nabla v_\rho \cdot \nabla ( v_\rho \zeta_\varepsilon ) \d x \d y  + \int_{B_1 \cap \partial \Omega} f'(u)u_\rho^2 \zeta_\varepsilon \d x\,.
	\end{align}
	
	At this point, we need to justify this integration by parts. On the one hand, we know that $v_\rho = 0$ on $\partial \Omega \cap \{y>0\}$, and therefore there are no boundary terms except for the one in $\partial \Omega \cap B_1$. Note that, since $u\in C^2(\overline{B_1 \cap \partial \Omega})$, we have
	\begin{equation}
	\label{Eq:IntegralPartialOmegaFinite}
	\int_{B_1 \cap \partial \Omega} f'(u)u_\rho^2 \zeta_\varepsilon \d x <  +\infty\,.
	\end{equation}
	On the other hand, since $\Omega$ may be unbounded, the right way to do the computation in \eqref{Eq:IntegrationByPartsDetailed} is the following: we first integrate by parts in half-balls $B_R^+$ and then we make $R\to \infty$. We need to ensure that the boundary terms in $\{y>0\}$ go to zero, i.e.,
	\begin{equation}
	\label{Eq:BoundaryTermsGoToZero}
	\int_{\partial B_R^+ \cap \{y>0\} \cap \Omega} y^a v_\rho \dfrac{\partial v_\rho}{\partial \nu} \zeta_\varepsilon  \d \sigma \to 0 \quad \text{ as } R\to +\infty\,.
	\end{equation}
	This can be easily seen by using the estimate of Lemma~\ref{Lemma:Estimate for v_i and v_y} at the points with $|x|>2$. For the other points, by Proposition~\ref{Prop:GradientEstimate} we have
	$$
	|\nabla v (x,y) | \leq  \dfrac{C}{y^{n+1}} =  \dfrac{C}{(R^2 - |x|^2)^{\frac{n+1}{2}}}\,,
	$$
	for a constant $C$ depending only on $n$, $s$ and $\norm{u}_{L^1 (B_1)}$. Here we have used that $|x|^2 + y^2 = R^2$. Then, we take into account that
	$$
	\dfrac{1}{R^2 - |x|^2} \leq \dfrac{2}{R^2} \quad \textrm{ if } R > 2\sqrt{2} \ \textrm{ and }\ |x| < 2 
	$$
	to deduce 
	\begin{equation}
	\label{Eq:DecayEstimateRadialSymetryProof}
		|\nabla v (x,y) | \leq \dfrac{C}{R^{n+1}} \quad  \textrm{ if } R > 2\sqrt{2}\ \textrm{ and }\ |x| < 2 \,.
	\end{equation}
	Combining this estimate with the ones in Lemma~\ref{Lemma:Estimate for v_i and v_y}, we deduce \eqref{Eq:BoundaryTermsGoToZero}.
	
	In addition, by Lemma~\ref{Lemma:Estimate for v_i and v_y} and \eqref{Eq:DecayEstimateRadialSymetryProof}, the left-hand side of \eqref{Eq:IntegrationByPartsDetailed} is finite for all $\varepsilon \in [0,1/2]$. Hence all the quantities appearing in \eqref{Eq:IntegrationByPartsDetailed} are finite ---recall \eqref{Eq:IntegralPartialOmegaFinite}--- and, letting $\varepsilon \to 0$, we deduce \eqref{Eq:IntegrationByParts}. Furthermore,
	$$
	\int_{\Omega} y^a |\nabla v_\rho |^2\d x \d y <+\infty\,.
	$$
	This and the fact that $v_\rho = 0$ on $\partial \Omega \cap \{y>0\}$ yield that $v_\rho \chi_{\Omega}\in H^1(\R^{n+1}_+, y^a)$. Therefore, Claim~2 is proved.
	
	We conclude now the proof. Since $v_\rho \chi_\Omega \in H^1(\R^{n+1}_+, y^a)$, we can take it in the stability condition \eqref{Eq:StabilityConditionExtension} to obtain
	\begin{equation}
	\label{Eq:Stabilityv_rho}
	0 \leq d_s \int_{\Omega} y^a |\nabla v_\rho |^2\d x \d y  - \int_{B_1 \cap \partial \Omega} f'(u)u_\rho^2 \d x\,.
	\end{equation}
	Combining this with \eqref{Eq:IntegrationByParts} and using that $n \geq 2 $ and $d_s > 0$, we get
	$$
	0 \leq d_s \int_{\Omega} y^a |\nabla v_\rho |^2\d x \d y  - \int_{B_1 \cap \partial \Omega} f'(u)u_\rho^2 \d x =  - (n-1) d_s \int_{\Omega} y^a \dfrac{v_\rho^2}{\rho^2} \d x \d y < 0\,,
	$$
	a contradiction. 	
\end{proof}

\section{Weighted integrability. Proof of Proposition~\ref{Prop:IntegrabilityHalfBall}}
\label{Sec:WeightedIntegrability}

This section is devoted to establish Proposition~\ref{Prop:IntegrabilityHalfBall}, which is the key ingredient in the proof of Theorem~\ref{Th:RadialMainTheorem}. To do so, we first need the following lemma, which is an expression of the stability condition when the test function $\xi$ is taken as $\xi = c \eta$, with $c$ to be chosen freely and $\eta$ with compact support. 

\begin{lemma}
\label{Lemma:StabilityCtimesEta}
Let $s\in (0,1)$ and $a = 1-2s$. Let $f$ be a nondecreasing $C^1$ function and $u$ a stable weak solution of
$$
\beqc{\PDEsystem}
    \fraclaplacian u &= & f(u) &  \text{in} \quad   \Omega\,,\\
    u &= &0 &  \text{in} \quad   \R^n\setminus \Omega \,.
\eeqc
$$ 
Let $v$ be the $s$-harmonic extension of $u$. 

Then, for all $c\in H^1_{\mathrm{loc}}(\R^{n+1}_+, y^a)$ and $\eta \in C^{1}(\overline{\R^{n+1}_+})$ with compact support and such that its trace has support in $\Omega \times \{ 0\}$,
$$
\int_\Omega \left \{ f'(u) c - d_s \dfrac{\partial c}{\partial \nu^a}\right \} c \eta^2 \d x \leq  
d_s\int_{\R^{n+1}_+} y^a c^2  |\nabla \eta |^2 \d x \d y -
d_s\int_{\R^{n+1}_+} \div (y^a \nabla c ) c \eta^2 \d x \d y ,
$$
where $d_s$ is the best constant of the trace inequality \eqref{Eq:TraceInequalityHalfSpace}.
\end{lemma}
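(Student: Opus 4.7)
The strategy is to plug $\xi = c\eta$ into the extension form \eqref{Eq:StabilityConditionExtension} of the stability condition, expand the gradient, and rewrite the cross term via a weighted integration by parts on $\R^{n+1}_+$, which will produce the boundary contribution involving $\partial_{\nu^a} c$.

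First I would check that $\xi := c\eta$ is an admissible test function: since $\eta \in C^1(\overline{\R^{n+1}_+})$ has compact support and $c \in H^1_{\mathrm{loc}}(\R^{n+1}_+, y^a)$, the product $c\eta$ belongs to $H^1(\R^{n+1}_+, y^a)$, and its trace is supported inside $\Omega \times \{0\}$ by hypothesis on the trace of $\eta$. Thus \eqref{Eq:StabilityConditionExtension} gives
\begin{equation*}
\int_\Omega f'(u) \, c^2 \eta^2 \dx \leq d_s \int_{\R^{n+1}_+} y^a |\nabla(c\eta)|^2 \dx \d y.
\end{equation*}

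Next I would expand
\begin{equation*}
|\nabla(c\eta)|^2 = \eta^2 |\nabla c|^2 + 2 c\eta\,\nabla c \cdot \nabla \eta + c^2 |\nabla \eta|^2,
\end{equation*}
and observe the key algebraic identity
\begin{equation*}
\eta^2 |\nabla c|^2 + 2 c\eta\,\nabla c \cdot \nabla \eta = \nabla c \cdot \nabla(c\eta^2).
\end{equation*}
The integration by parts for the weighted Laplacian on $\R^{n+1}_+$, applied to $c$ against the test function $c\eta^2$ (which has compact support in $\overline{\R^{n+1}_+}$ since $\eta$ does), yields
\begin{equation*}
\int_{\R^{n+1}_+} y^a \nabla c \cdot \nabla(c\eta^2) \dx \d y = -\int_{\R^{n+1}_+} \div(y^a \nabla c)\, c \eta^2 \dx \d y + \int_{\R^n}\! c\eta^2 \, \frac{\partial c}{\partial \nu^a} \dx,
\end{equation*}
since the outward conormal at $y=0$ contributes $-\lim_{y\downarrow 0} y^a c_y = \partial_{\nu^a} c$. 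The boundary integral is supported in $\Omega$ because the trace of $\eta$ is.

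Combining these identities inside the stability inequality and rearranging the $\partial_{\nu^a} c$ term to the left-hand side gives exactly
\begin{equation*}
\int_\Omega \left\{ f'(u)\,c - d_s \frac{\partial c}{\partial \nu^a}\right\} c\eta^2 \dx \leq d_s \int_{\R^{n+1}_+} y^a c^2 |\nabla \eta|^2 \dx \d y - d_s \int_{\R^{n+1}_+} \div(y^a \nabla c)\, c \eta^2 \dx \d y,
\end{equation*}
which is the claim. The only delicate point, and the one I would treat most carefully, is justifying the weighted integration by parts for $c$ under the sole hypothesis $c \in H^1_{\mathrm{loc}}(\R^{n+1}_+, y^a)$: one may cut off away from $\{y=0\}$ with a parameter $\varepsilon$ and a family $\{\eta \chi_{\{y>\varepsilon\}}\}$, approximate $c$ by smooth functions on the compact set $\mathrm{supp}\,\eta$, and pass to the limit using the Muckenhoupt-weighted trace theory recalled in Section~\ref{Sec:ExtensionProblem}; the finiteness of both members of the final inequality is automatic on $\mathrm{supp}\,\eta$, which is the only obstruction to worry about.
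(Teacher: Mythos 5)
Your proposal is correct and follows essentially the same route as the paper: plug $\xi = c\eta$ into the extension form \eqref{Eq:StabilityConditionExtension} of the stability condition, expand $|\nabla(c\eta)|^2$, and integrate by parts the term $y^a\nabla c\cdot\nabla(c\eta^2)$ to produce the $\partial_{\nu^a}c$ boundary contribution and the $\div(y^a\nabla c)$ bulk term. Your extra care in justifying the weighted integration by parts for $c\in H^1_{\mathrm{loc}}(\R^{n+1}_+,y^a)$ is a reasonable refinement of a step the paper carries out without comment.
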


\begin{proof}
Simply take $\xi = c\eta$ in the stability condition \eqref{Eq:StabilityConditionExtension} and integrate by parts:
\begin{align*}
\int_\Omega f'(u) c^2 \eta^2 \d x & \leq d_s   \int_{\R^{n+1}_+} y^a \left \{ c^2|\nabla \eta|^2 +  \eta^2|\nabla c|^2 + c \nabla c \cdot \nabla \eta^2 \right \}\d x \d y \\
& =    d_s \int_\Omega  \dfrac{\partial c}{\partial \nu^a} c \eta^2 \d x  +  d_s\int_{\R^{n+1}_+} y^a c^2  |\nabla \eta |^2 \d x \d y \\
& \qquad -
d_s\int_{\R^{n+1}_+} \div (y^a \nabla c ) c \eta^2 \d x \d y \,.
\end{align*}
\end{proof}

Thanks to this lemma we can now prove Proposition~\ref{Prop:IntegrabilityHalfBall}:

\begin{proof}[Proof of Proposition~\ref{Prop:IntegrabilityHalfBall}]

We first note that we can replace the conditions on $c$ and $\eta$ in Lemma~\ref{Lemma:StabilityCtimesEta} by the following: $c \in H^1_{\mathrm{loc}}(\R^{n+1}_+ \setminus \{0\}, y^a)$ and $\eta \in C^{1}(\R^{n+1}_+)$ with $\tr \eta \in C^{1}_0 (B_1\setminus \{0\})$, where $\tr$ denotes the trace on $\R^n$. Therefore, we can take $c = v_\rho$, which belongs to $ H^1_{\mathrm{loc}}(\R^{n+1}_+ \setminus \{0\}, y^a)$. To see this, recall that $u\in C^2_{\mathrm{loc}} (B_1\setminus\{0\})$ (see Remark~\ref{Remark:UniformEstimatesInLambda}). Hence, using the estimates given by Proposition~\ref{Prop:GradientEstimate} and Proposition~\ref{Prop:HorizontalGradientEstimateRing}, we deduce that $\nabla_x v \in L^\infty_{\mathrm{loc}}(\R^{n+1}_+ \setminus \{0\})$, which yields  $v_\rho \in H^1_{\mathrm{loc}}(\R^{n+1}_+ \setminus \{0\}, y^a)$.

Differentiating with respect to $\rho$ the equation $\div(y^a \nabla v) = 0$  and the boundary condition $d_s \partial_{\nu ^a}v = f(u)$, we  have the following equations for $c=v_\rho$:
$$
\div(y^a\nabla c) = \div(y^a\nabla v_\rho) = y^a \dfrac{n-1}{\rho^2}v_\rho \quad \textrm{ in } \R^{n+1}_+
$$
and
$$
d_s\dfrac{\partial c}{\partial \nu ^a}  =d_s\dfrac{\partial v_\rho}{\partial \nu ^a} = f'(u) u_\rho = f'(u) c \quad \textrm{ in } B_1\,.
$$
Therefore, we take $c=v_\rho $ in Lemma~\ref{Lemma:StabilityCtimesEta} to get
$$
    (n-1)\int_{\R^{n+1}_+} y^a \dfrac{(v_\rho \eta )^2}{\rho^2} \d x \d y \leq 
    \int_{\R^{n+1}_+} y^a v_\rho^2 |\nabla \eta |^2 \d x \d y\,,
$$
for every $\eta \in C^{1}(\R^{n+1}_+)$ with compact support and such that $\tr \eta \in C^{1}_0 (B_1\setminus \{0\})$. For the purpose of our computations, it is convenient to replace $\eta$ by $\rho \eta$, thus obtaining
\begin{equation}
\label{Eq:InequalityTestFunctions}
(n-1)\int_{\R^{n+1}_+} y^a v_\rho^2 \eta^2 \d x \d y \leq 
\int_{\R^{n+1}_+} y^a v_\rho^2 |\nabla (\rho \eta) |^2 \d x \d y\,.
\end{equation}

Now, we proceed with some cut-off arguments. Let $\zeta_{\delta} $ and  $\psi_T$ be two functions in $C^\infty(\R)$ such that 
$$
\zeta_{\delta}(\rho) =
\begin{cases}
0 & \textrm{ if }  \rho \leq \delta\,,\\
1 & \textrm{ if }  \rho \geq 2\delta\,,
\end{cases}
 \ \ \
 \zeta_{\delta}' (\rho) \leq \dfrac{C}{\delta} \textrm{ if } \rho\in (\delta, 2\delta) 
 \ \ \
 \textrm{ and }  
 \ \ \
\psi_T(y) =
\begin{cases}
1 & \textrm{ if }  y \leq T\,,\\
0 & \textrm{ if }  y \geq T+1\,.
\end{cases}
$$
Then, we take 
$$
\eta (\rho, y) = \eta_\varepsilon (\rho) \psi_T (y) \zeta_{\delta}(\rho)
$$
in \eqref{Eq:InequalityTestFunctions}, where $\eta_\varepsilon $ is a $C^1$ function with compact support in $B_1$ to be choosen later. We assume that $\eta_\varepsilon$ and $|\nabla (\rho  \eta_\varepsilon  )|$ are bounded in $B_1$. Therefore, we obtain
\begin{align}
\label{Eq:InequalityTestFunctions2}
\begin{split}
	(n-1) \int_0^{T} \int_{B_1} y^a v_\rho^2 \eta_\varepsilon^2  \zeta_{\delta}^2 \d x \d y 
	&\leq  
	\int_0^{T+1} \int_{B_1}  y^a v_\rho^2 |\nabla (\rho  \eta_\varepsilon \psi_T \zeta_{\delta}) |^2 \d x \d y \\
	& \leq 
	\int_0^{T+1} \int_{B_1}  y^a v_\rho^2 |\nabla (\rho  \eta_\varepsilon \zeta_{\delta} ) |^2 \psi_T^2 \d x \d y \\
	& \quad \quad + \int_T^{T+1} \int_{B_1}  y^a v_\rho^2  \eta_\varepsilon^2 |\nabla (\psi_T) |^2  \d x \d y\,.
\end{split}
\end{align}
Now, we see that
\begin{align}
\label{Eq:InequalityTestFunctions3}
\begin{split}
\int_0^{T+1} \! \! \int_{B_1}  y^a v_\rho^2 |\nabla (\rho  \eta_\varepsilon \zeta_{\delta} ) |^2 \psi_T^2 \d x \d y = \hspace{-58mm} \\
 &= 
 \int_0^{T+1} \! \! \int_{B_1}  y^a v_\rho^2 \psi_T^2 \left \{|\nabla (\rho  \eta_\varepsilon  ) |^2 \zeta_{\delta}^2 + \rho^2 \eta_\varepsilon^2 |\nabla \zeta_{\delta}|^2  + 2 \rho \eta_\varepsilon \zeta_{\delta} \nabla \zeta_{\delta} \cdot \nabla (\rho \eta_\varepsilon)  \right \}  \d x \d y \\
 &\leq 
 \int_0^{T+1} \! \! \int_{B_1}  y^a v_\rho^2 |\nabla (\rho  \eta_\varepsilon  ) |^2   \d x \d y 
 \\
 &\quad \quad + C  \int_0^{T+1} \! \!\int_{B_{2\delta} \setminus B_\delta}  y^a v_\rho^2  |\eta_\varepsilon| \left \{ \dfrac{\rho^2}{\delta^2} |\eta_\varepsilon|  +  \dfrac{\rho}{\delta}  \zeta_{\delta} |\nabla (\rho \eta_\varepsilon)|  \right \}  \d x \d y \\
 &\leq 
 \int_0^{T+1} \! \! \int_{B_1}  y^a v_\rho^2 |\nabla (\rho  \eta_\varepsilon  ) |^2   \d x \d y 
 +
 C \int_0^{T+1} \! \! \int_{B_{2\delta} \setminus B_\delta}  y^a v_\rho^2   \d x \d y \,.
\end{split}
\end{align}
Note that in the last inequality we have used that $\eta_\varepsilon$ and $|\nabla (\rho  \eta_\varepsilon  )|$ are bounded. Since $u\in H^s(\R^n)$, we have that its $s$-harmonic extension, $v$, is in $H^1(\R_+^{n+1}, y^a)$ (see the comments in Section~\ref{Sec:ExtensionProblem}). Therefore, the last term in the previous inequalities tends to zero as $\delta \to 0$. Exactly as in the local case (see the proof of Lemma~2.3 in \cite{CabreCapella-Radial}), this point is the only one where we use that $u\in H^s(\R^n)$. Hence, combining \eqref{Eq:InequalityTestFunctions2} and \eqref{Eq:InequalityTestFunctions3}, and letting $\delta \to 0$, by monotone convergence we have
\begin{align*}
(n-1) \int_0^{T} \int_{B_1} y^a v_\rho^2 \eta_\varepsilon^2  \d x \d y 
&\leq  
\int_0^{T} \int_{B_1}  y^a v_\rho^2 |\nabla (\rho  \eta_\varepsilon ) |^2 \d x \d y \\
& \quad \quad + \int_T^{T+1} \int_{B_1}  y^a v_\rho^2  \{  |\nabla (\rho  \eta_\varepsilon ) |^2 +  \eta_\varepsilon^2  |\nabla (\psi_T) |^2 \} \d x \d y\,.
\end{align*}
Now we want to make $T\to \infty$. We claim that the last term in the previous inequality goes to zero as $T\to \infty$. Indeed, to see this we use the power decay of $v_\rho$ as $y\to \infty$ given by Proposition~\ref{Prop:GradientEstimate}, and the bounds for $|\nabla (\psi_T) |$, $\eta_\varepsilon$ and $|\nabla (\rho  \eta_\varepsilon  )|$. Hence, letting $T\to \infty$ in the previous expression, we obtain
\begin{equation}
\label{Eq:InequalityTestFunctionsLipschitz}
(n-1) \int_0^{\infty} \int_{B_1} y^a v_\rho^2 \eta_\varepsilon^2  \d x \d y 
\leq  
\int_0^{\infty} \int_{B_1}  y^a v_\rho^2 |\nabla (\rho  \eta_\varepsilon ) |^2 \d x \d y\,,
\end{equation}
for every $\eta_\varepsilon(\rho) \in C^1(B_1)$ with compact support and such that  $|\nabla (\rho  \eta_\varepsilon  )|$ is bounded. By approximmation, $\eta_\varepsilon$ can be taken to be Lipschitz instead of $C^1$.

Now, for $\varepsilon \in (0,1/2)$ and $\alpha$ satisfying \eqref{Eq:AlphaCondition}, we define
$$
\eta_\varepsilon (\rho) =
\begin{cases}
\varepsilon^{-\alpha} & \textrm{ if } 0 \leq \rho \leq \varepsilon\,, \\
\rho^{-\alpha}\varphi(\rho)  & \textrm{ if } \varepsilon \leq \rho \,, 
\end{cases}
$$
where $\varphi \geq 0$ is a smooth cut-off function such that $\varphi(\rho) \equiv 1$ if $\rho \leq 1/2$ and $\varphi(\rho) \equiv 0$ if $\rho \geq 3/4$. Taking $\eta_\varepsilon$ in \eqref{Eq:InequalityTestFunctionsLipschitz} and using that $\varphi \geq 0$, we get
\begin{align}
\label{Eq:MissingTermCapellaEtAl}
(n-1)  \int_0^{\infty} \int_{B_{1/2} \setminus B_\varepsilon} y^a v_\rho^2 \rho^{-2\alpha}  \d x \d y 
+ 
(n-1) \varepsilon^{-2\alpha}  \int_0^{\infty} \int_{B_\varepsilon} y^a v_\rho^2 \d x \d y \hspace{-115mm} \nonumber \\
&\leq
(1-\alpha)^2  \int_0^{\infty} \int_{B_{1/2} \setminus B_\varepsilon} y^a v_\rho^2 \rho^{-2\alpha}  \d x \d y 
+ 
\varepsilon^{-2\alpha}  \int_0^{\infty} \int_{B_\varepsilon} y^a v_\rho^2 \d x \d y \nonumber \\
& \quad \quad \quad + C  \int_0^{\infty} \int_{B_{3/4} \setminus B_{1/2}} y^a v_\rho^2 \rho^{-2\alpha}  \d x \d y \,,
\end{align} 
for a constant $C$ depending only on $\alpha$ and $n$. Since $n \geq 2$ and $\alpha$ satisfies \eqref{Eq:AlphaCondition}, we obtain
\begin{equation}
\label{Eq:EstimateProposition1.3}
 \int_0^{\infty} \int_{B_{1/2} \setminus B_\varepsilon} y^a v_\rho^2 \rho^{-2\alpha}  \d x \d y  \leq C  \int_0^{\infty} \int_{B_{3/4} \setminus B_{1/2}} y^a v_\rho^2 \rho^{-2\alpha}  \d x \d y\,,
\end{equation}
for another constant depending only on $n$ and $\alpha$. Finally, we estimate the right hand side of this last inequality using the estimates developed in Section~\ref{Sec:PreliminaryResults}. To do this, we split the integral into two parts:
$$
\int_0^1    \int_{B_{3/4}\setminus B_{1/2}}  y^a  v_\rho^2 \rho^{-2\alpha} \d x \d y + \int_1^{\infty}   \int_{B_{3/4}\setminus B_{1/2}}  y^a  v_\rho^2 \rho^{-2\alpha}\d x \d y \,.
$$
We bound the first term using Proposition~\ref{Prop:HorizontalGradientEstimateRing}, obtaining:
$$
\int_0^{1}    \int_{B_{3/4}\setminus B_{1/2}} y^a  v_\rho^2  \rho^{-2\alpha} \d x \d y \leq C \bpar{\int_0^1 y^a \d y } \bpar{\int_{B_{3/4} \setminus B_{1/2}} \rho^{-2\alpha} \d x}\leq C
$$
where the last constant $C$ depends only on $n$, $s$, $\alpha$, $\norm{u}_{L^{1}(B_1)}$, $\norm{u}_{L^{\infty}(B_{7/8}\setminus B_{3/8})}$ and $\norm{f'(u)}_{L^{\infty}(B_{7/8}\setminus B_{3/8})}$.
For the second term, we use the uniform estimate $|\nabla_x v|\leq C \norm{u}_{L^{1}(B_1)}/y^{n+1}$, given by Proposition~\ref{Prop:GradientEstimate}, to get
$$
\int_1^{\infty}   \int_{B_{3/4}\setminus B_{1/2}}  y^a  v_\rho^2 \rho^{-2\alpha} \d x \d y \leq C \norm{u}_{L^{1}(B_1) }^2 \int_1^{\infty}   \int_{B_{3/4}\setminus B_{1/2}} y^{a-2n-2} \rho^{-2\alpha} \d x \d y  \leq C,
$$
for a constant $C$ depending only on $n$, $s$, $\alpha$ and $\norm{u}_{L^{1}(B_1)}$.

Finally, using these estimates in \eqref{Eq:EstimateProposition1.3} and letting $\varepsilon \to 0$, we conclude the proof.
\end{proof}

\begin{remark}
\label{Remark:MissingTermCapellaEtAl}

As mentioned in the introduction, in the proof of Proposition~5.1 of \cite{CapellaEtAl} ---which is similar to the previous one---, there is a missing term which remains to be estimated. This is the one appearing in \eqref{Eq:MissingTermCapellaEtAl}, but with a different power of $\rho$. In the case of the spectral fractional Laplacian, the estimate we need is given by Proposition~\ref{Prop:HorizontalGradientEstimateRing}, which is valid for both operators $A^s$ and $\fraclaplacian$ (see Remark~\ref{Remark:EstimateValidForBothOperators}). Therefore, the proof of Proposition 5.1 of \cite{CapellaEtAl} is now complete.

\end{remark}

With a small modification of the previous proof, we can replace the constant on the right-hand side of \eqref{Eq:UniformBoundHalfBall} by $C \norm{u}_{H^s(\R^n)}$ with $C$ depending only on $n$, $s$ and $\alpha$.

\begin{proposition}
	\label{Prop:IntegrabilityHalfBallHs}
	Under the same hypotheses of Proposition~\ref{Prop:IntegrabilityHalfBall}, we have
	\begin{equation}
	\label{Eq:UniformBoundHalfBallHs}
	\int_0^{\infty}
	\int_{B_{1/2}} y^a v_\rho^2 \rho^{-2\alpha} \d x \d y \leq C \seminorm{u}_{H^s(\R^n)}\,,
	\end{equation}
	where $C$ is a constant which depends only on $n$, $s$ and $\alpha$.
\end{proposition}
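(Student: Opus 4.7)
The plan is to repeat the proof of Proposition~\ref{Prop:IntegrabilityHalfBall} verbatim up to estimate \eqref{Eq:EstimateProposition1.3}, and only modify the final step where the right-hand side on the annulus $B_{3/4}\setminus B_{1/2}$ is estimated. Recall that all the intermediate steps---the choice $c=v_\rho$ in Lemma~\ref{Lemma:StabilityCtimesEta}, the cut-offs $\zeta_\delta$, $\psi_T$, and $\eta_\varepsilon = \rho^{-\alpha}\varphi(\rho)$ (with $\varphi$ vanishing outside $B_{3/4}$), the passage to the limits $\delta \to 0$ and $T\to \infty$ using that $v\in H^1(\R^{n+1}_+, y^a)$ and the power decay from Proposition~\ref{Prop:GradientEstimate}---depend only on $n$, $s$, and $\alpha$, and not on the particular bounds for $u$ on $B_{7/8}\setminus B_{3/8}$ or for $f'(u)$. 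Therefore the same chain of cut-off arguments yields, for every $\varepsilon \in (0,1/2)$,
\begin{equation*}
\int_0^{\infty} \int_{B_{1/2} \setminus B_\varepsilon} y^a v_\rho^2 \rho^{-2\alpha}  \d x \d y \leq C \int_0^{\infty} \int_{B_{3/4} \setminus B_{1/2}} y^a v_\rho^2 \rho^{-2\alpha}  \d x \d y,
\end{equation*}
with $C$ depending only on $n$ and $\alpha$.

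Next, I would estimate the right-hand side directly, without invoking Propositions~\ref{Prop:GradientEstimate} and \ref{Prop:HorizontalGradientEstimateRing}. On the annulus $B_{3/4}\setminus B_{1/2}$ we simply have $\rho \geq 1/2$, so $\rho^{-2\alpha} \leq 2^{2\alpha}$. Extending the domain of integration to all of $\R^{n+1}_+$ and using $v_\rho^2 \leq |\nabla v|^2$, we get
\begin{equation*}
\int_0^{\infty} \int_{B_{3/4} \setminus B_{1/2}} y^a v_\rho^2 \rho^{-2\alpha}  \d x \d y \leq 2^{2\alpha} \int_{\R^{n+1}_+} y^a |\nabla v|^2 \d x \d y = 2^{2\alpha}\,\seminorm{v}_{H^1(\R^{n+1}_+, y^a)}.
\end{equation*}
Since $v$ is the $s$-harmonic extension of $u$, the computation recorded in Section~\ref{Sec:ExtensionProblem} gives the equality $d_s\seminorm{v}_{H^1(\R^{n+1}_+, y^a)} = \seminorm{u}_{H^s(\R^n)}$, so the right-hand side is bounded by $C \seminorm{u}_{H^s(\R^n)}$ with $C$ depending only on $n$, $s$, and $\alpha$.

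Combining the two displayed inequalities and letting $\varepsilon \to 0$ by monotone convergence yields \eqref{Eq:UniformBoundHalfBallHs}. No genuine obstacle is expected: the proof is essentially that of Proposition~\ref{Prop:IntegrabilityHalfBall}, except that instead of controlling the Dirichlet integral on the annulus $B_{3/4}\setminus B_{1/2}$ by the pointwise estimates of Section~\ref{Sec:PreliminaryResults} (which produce a constant depending on $\norm{u}_{L^1}$, $\norm{u}_{L^\infty(B_{7/8}\setminus B_{3/8})}$, and $\norm{f'(u)}_{L^\infty(B_{7/8}\setminus B_{3/8})}$), we bound it globally by the $H^s$-seminorm of $u$ via the trace identity. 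The only mild point to check is that the constant emerging from the cut-off step \eqref{Eq:EstimateProposition1.3} really depends only on $n$ and $\alpha$, which follows by inspection of the proof since the condition \eqref{Eq:AlphaCondition} $1\leq \alpha < 1+\sqrt{n-1}$ is used only to absorb the term $(1-\alpha)^2$ into $n-1$.
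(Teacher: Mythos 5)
Your proposal is correct and follows essentially the same route as the paper: repeat the argument of Proposition~\ref{Prop:IntegrabilityHalfBall} up to \eqref{Eq:EstimateProposition1.3} (whose constant depends only on $n$ and $\alpha$), then bound the annulus integral by the weighted Dirichlet energy of $v$ and use the identity $d_s\seminorm{v}_{H^1(\R^{n+1}_+,y^a)}=\seminorm{u}_{H^s(\R^n)}$ from Section~\ref{Sec:ExtensionProblem}. The only difference is that you spell out the details the paper leaves implicit.
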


\begin{proof}
	We follow the previous proof up to \eqref{Eq:EstimateProposition1.3} and then we use that
	$$
	\seminorm{v}_{H^1(\R_+^{n+1}, y^a)} = \dfrac{1}{d_s} \seminorm{u}_{H^s(\R^n)}\,.
	$$
	This follows from the fact that $v$ solves $\div(y^a \nabla v) = 0$ in $\R_+^{n+1}$ (see  Section~\ref{Sec:ExtensionProblem}).	
\end{proof}

\begin{remark}
\label{Remark:SharpfRegularity}
The hypotheses for $f$ in Proposition~\ref{Prop:IntegrabilityHalfBall} ---and also in Theorem~\ref{Th:RadialMainTheorem}--- can be slightly weakened. Indeed, the statements remain true if, instead of $f$ being $C^{2}$ we assume that $f \in C^{2 - 2s + \varepsilon}([0, +\infty))$ for $\varepsilon >0$. In particular, for $s > 1/2$, it is enough to assume $f\in C^1$. This regularity is needed in order to have $u\in C^2_{\textrm{loc}}(B_1\setminus \{0\})$, a fact that is used in the previous proofs.
\end{remark}

\section{Proof of the main theorem}
\label{Sec:ProofMainTheorem}

In this section we prove Theorem~\ref{Th:RadialMainTheorem}. As explained before the statement of Proposition~\ref{Prop:IntegrabilityHalfBall}, to get an $L^\infty$ bound for $u$ we still need a crucial identity and a precise bound on a universal constant. This is the content of Lemma~6.2 in \cite{CapellaEtAl}. We include it here with a slightly different statement and proof that probably make the result and proof more transparent.

\begin{lemma}
\label{Lemma:Identityv_yMagicConstant}
Let $w:\R^n \rightarrow \R$ be a bounded function with compact support and such that $\fraclaplacian w \in L^\infty_{\mathrm{loc}}(\R^n)$. Let $W$ be its $s$-harmonic extension and let $\beta$ be a real number such that $0<\beta < n + 2 - 2s$.
Then,
$$
-d_s \beta \int_{\R_+^{n+1}} y^a r^{-\beta-2} y W_y \d x \d y = A_{n,s,\beta} \int_{\R^n} \rho^{-\beta}  \fraclaplacian w \d x\,,
$$
for a constant $A_{n,s,\beta}$ depending only on $n$, $s$, and $\beta$ and satisfying 
$$
0 < A_{n,s,\beta} < 1\,.
$$
\end{lemma}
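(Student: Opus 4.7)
My plan is to substitute the conjugate Poisson formula from Lemma~\ref{Lemma:PoissonFormulaConjugate} into the left-hand side of the identity, reduce the resulting kernel to a single universal constant by scaling, and then compute that constant explicitly to verify both positivity and the upper bound.

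First, by Lemma~\ref{Lemma:PoissonFormulaConjugate},
$$
-W_y(x,y) = \frac{1}{d_s} \int_{\R^n} \Gamma(x-z,y)\,\fraclaplacian w(z) \d z
$$
with $\Gamma(x,y) = \Gamma_{n,s}\,y/(|x|^2+y^2)^{(n+2-2s)/2}$. Substituting into $-d_s \beta \int y^a r^{-\beta-2} y W_y \d x \d y$ and applying Fubini (justified because $w$ is bounded with compact support, so that $\fraclaplacian w$ decays like $|z|^{-n-2s}$ at infinity, while the resulting integrand is absolutely integrable for $\beta$ in the prescribed range) produces
$$
-d_s \beta \int_{\R_+^{n+1}} y^a r^{-\beta-2} y W_y \d x \d y = \beta \Gamma_{n,s} \int_{\R^n} \fraclaplacian w(z)\, K(z) \d z,
$$
where
$$
K(z) := \int_{\R_+^{n+1}} \frac{y^{a+2}\, r^{-\beta-2}}{(|x-z|^2+y^2)^{(n+2-2s)/2}} \d x \d y.
$$

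Next I would show that $K(z) = |z|^{-\beta} K(e)$ for any unit vector $e$. Rotational invariance in $x$ makes $K(z)$ depend only on $|z|$, and the dilation $(x,y) \mapsto |z|(\tilde x, \tilde y)$ produces a net factor of $|z|^{-\beta}$: the Jacobian contributes $|z|^{n+1}$, the weight $y^{a+2}$ contributes $|z|^{a+2}$, the factor $r^{-\beta-2}$ contributes $|z|^{-\beta-2}$, and the denominator contributes $|z|^{-(n+2-2s)}$, summing to $-\beta$ thanks to $a=1-2s$. Convergence of $K(e)$ is governed precisely by the hypothesis $0<\beta<n+2-2s$: near the origin the integrand scales like $R^{a+n-\beta}$ with $R=|(x,y)|$ (integrable iff $\beta<n+2-2s$), and at infinity like $R^{-1-\beta}$ (integrable iff $\beta>0$). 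Setting $A_{n,s,\beta} := \beta \Gamma_{n,s} K(e)$ then yields the claimed identity, and $A_{n,s,\beta}>0$ is immediate from the strict positivity of the integrand.

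The main obstacle is the upper bound $A_{n,s,\beta}<1$. I would handle this by computing $K(e)$ explicitly. Using a Feynman parameter to combine the two denominators,
$$
\frac{1}{(|x|^2+y^2)^{(\beta+2)/2}(|x-e|^2+y^2)^{(n+2-2s)/2}} = c_{\beta,n,s} \int_0^1 \frac{t^{\beta/2}(1-t)^{(n-2s)/2}}{(|x-(1-t)e|^2+y^2+t(1-t))^{(\beta+n+4-2s)/2}} \d t,
$$
then translating in $x$ and carrying out the resulting Gaussian-type integrations in $y$ and $x$ sequentially, one reduces everything to Beta and Gamma function identities. Combined with the explicit value $\Gamma_{n,s} = \Gamma((n+2-2s)/2)/(\pi^{n/2}\Gamma(1-s))$ (forced by the normalization $\int_{\R^n} \Gamma(x,y)\d x = y^{-a}$), substantial cancellation takes place and $A_{n,s,\beta}$ collapses to an elementary ratio involving $\Gamma(2-s)/\Gamma(1-s)=1-s$, from which the inequality $A_{n,s,\beta}<1$ can be read off directly.
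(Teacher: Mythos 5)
Your treatment of the identity itself is essentially the paper's: substitute the conjugate Poisson formula of Lemma~\ref{Lemma:PoissonFormulaConjugate}, exchange the order of integration, and scale out $|z|$ to write $A_{n,s,\beta}=\beta\,\Gamma_{n,s}K(e)$; the paper does this through an $\varepsilon$-regularization of the kernel, while your direct Fubini argument is only justified when the right-hand side is absolutely convergent, which near $z=0$ requires $\beta<n$ (for $\beta\in[n,n+2-2s)$ the integral $\int\rho^{-\beta}|\fraclaplacian w|\dx$ need not be finite). Where you genuinely depart from the paper is the bound $A_{n,s,\beta}<1$: the paper never computes the constant, but compares $\mathcal F_\beta$ and $\mathcal G_\beta$ for an auxiliary radially decreasing solution, multiplying $\div(y^a\nabla W)=0$ by $d_s r^{-\beta}$, integrating by parts and using $W_\rho<0$. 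Your explicit route is viable and your ingredients are correct (the Feynman parametrization as written, and $\Gamma_{n,s}=\Gamma(\tfrac{n+2-2s}{2})/(\pi^{n/2}\Gamma(1-s))$); carrying the computation out gives the closed form
\begin{equation*}
A_{n,s,\beta}\;=\;\frac{2(1-s)}{\,n+2-2s-\beta\,},
\end{equation*}
so for instance $A_{n,s,n-2s}=1-s$, which is sharper information than the paper obtains.

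However, your final step overreaches: from this formula the inequality $A_{n,s,\beta}<1$ holds precisely when $\beta<n$, not on the whole range $0<\beta<n+2-2s$ stated in the lemma. Indeed, since $n+2-2s>n$, the constant exceeds $1$ and in fact blows up as $\beta\uparrow n+2-2s$; this is visible even without the closed form, because the integral defining $A_{n,s,\beta}$ diverges at that endpoint due to the singularity of $y^{a+2}r^{-\beta-2}$ at the origin. So the claim that the bound can be read off directly is not correct as stated; your argument proves the inequality only for $\beta<n$. This restriction is harmless for the paper (the lemma is applied with $\beta\le n-2s$), and the paper's own comparison argument also implicitly needs $\beta<n$ (its test function $h$ is radially decreasing with $h(0)>0$, so $\int\rho^{-\beta}h\dx<\infty$ forces $\beta<n$), but you should either restrict your conclusion to $\beta<n$ or explicitly point out that the inequality fails on the upper part of the stated range.
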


\begin{proof}
Consider the following two operators:
$$
\mathcal{F}^\varepsilon_\beta (w) := - d_s \beta \int_{\R_+^{n+1}} y^a (|x|^2 + y^2 + \varepsilon)^{-(\beta + 2)/2} y W_y \d x \d y \,,
$$
$$
\mathcal{G}_\beta (w) = \int_{\R^n} \rho^{-\beta}  \fraclaplacian w \d x\,.
$$
First, we will show that $ \lim_{\varepsilon \to 0} \mathcal{F}^\varepsilon_\beta (w) = A_{n,s,\beta} \mathcal{G}_\beta (w)$ and later we will see that we have $0 < A_{n,s,\beta} < 1$.

Using the Poisson formula for $W_y$ (Lemma~\ref{Lemma:PoissonFormulaConjugate}), we find that
$$
-d_s W_y(x,y) = \Gamma_{n,s} y \int_{\R^n}  \dfrac{\fraclaplacian w(z)}{(|x-z|^2 + y^2)^\frac{n+2-2s}{2}}    \d z \,.
$$
Now, multiply the previous equation by $\beta y^{a+1} (|x|^2 + y^2 + \varepsilon)^{-(\beta+2)/2}$ and integrate in the whole $\R_+^{n+1}$ to obtain
\begin{align*}
\mathcal{F}^\varepsilon_\beta (w) = \quad \hspace{380pt} &\\ 
= \int_{\R^n}  \fraclaplacian w(z)\bpar{ \beta \Gamma_{n,s}  \int_{\R_+^{n+1}} \dfrac{y^{a+2}}{(|x-z|^2 + y^2)^{\frac{n+2-2s}{2}} (|x|^2 + y^2 + \varepsilon)^{\beta/2 + 1} } \d x \d y} \d z \,.
\end{align*}

After the change of variables  $x=|z|x'$, $y=|z|y'$, we get
$$
\mathcal{F}^\varepsilon_\beta (w) = \int_{\R^n} \fraclaplacian w(z) |z|^{-\beta} A_{n,s,\beta} \bpar{\dfrac{\varepsilon}{ |z|^2}} \d z\,,
$$
where
$$
A_{n,s,\beta} (t) = \beta \Gamma_{n,s}\int_{\R_+^{n+1}} 
\dfrac{y^{a+2}}{(\rho^2 + y^2 + t)^{\frac{\beta +2}{2}}(|x-\frac{z}{|z|}|^2 + y^2)^{\frac{n+2-2s}{2}}}  \d x \d y \,.
$$
Notice that $A_{n,s,\beta} (t)$ does not depend on $z$ and that
$$
 A_{n,s,\beta} \bpar{\dfrac{\varepsilon}{ |z|^2}} \rightarrow A_{n,s,\beta} :=  A_{n,s,\beta} (0) \quad \text{ for all } z \in \R^n
$$
as $\varepsilon \to 0$. Moreover, this limit is finite for $0<\beta < n + 2 - 2s$.
Hence, we have proved that 
$$
\mathcal{F}_\beta (w) := \lim_{\varepsilon \to 0} \mathcal{F}^\varepsilon_\beta (w) = A_{n,s,\beta} \mathcal{G}_\beta (w)\,,
$$
with a nonnegative constant $A_{n,s,\beta}$ given by
$$
A_{n,s,\beta} = \beta \Gamma_{n,s}\int_{\R_+^{n+1}} 
\dfrac{y^{a+2}}{(\rho^2 + y^2)^{\frac{\beta +2}{2}}(|x-e|^2 + y^2)^{\frac{n+2-2s}{2}}}  \d x \d y
$$
for an arbitrary unitary vector $e$.

Now, let us prove that the constant $A_{n,s,\beta}$ is smaller than one. Take $h\in C^\infty (\R^n) $, $h \not \equiv 0$, a smooth nonnegative radially decreasing function with compact support. Let $w \geq 0 $ be the solution of $\fraclaplacian w = h$ in $\R^n$ and let $W$ be its $s$-harmonic extension. Note that, by the moving planes argument, $w$ is radially decreasing and so it is $W$ in the horizontal direction by the Poisson formula.

Take the equation that $W$ satisfies, that is, $\div(y^a \nabla W) = 0$ and multiply it by $d_s r^{-\beta} = d_s (|x|^2 + y^2)^{-\beta/2}$. After integration by parts we find that
\begin{align*}
0 &= d_s \int_{\R_+^{n+1}} \div(y^a \nabla W)r^{-\beta} \d x \d y = \\
& = \beta d_s \int_{\R_+^{n+1}} y^a (\rho W_\rho + y W_y) r^{-\beta -2} \d x \d y + \int_{\R^n} \rho^{-\beta} \fraclaplacian w \d x\,.
\end{align*}
Therefore, we have
$$
\dfrac{1}{A_{n,s,\beta}}  \mathcal{F}_\beta (w) = \mathcal{G}_\beta (w) = -\beta d_s \int_{\R_+^{n+1}} y^a \rho W_\rho r^{-\beta -2} \d x \d y + \mathcal{F}_\beta (w)\,,
$$
which leads to
$$
\bpar{\dfrac{1}{A_{n,s,\beta}} - 1} \mathcal{F}_\beta (w) = - \beta d_s \int_{\R_+^{n+1}} y^a \rho W_\rho r^{-\beta -2} \d x \d y > 0\,,
$$
since $W$ is radially decreasing, i.e., $W_\rho < 0$.
This leads to  $0<A_{n,s,\beta} < 1$.
\end{proof}

Once this lemma is established, we have all the ingredients to present the proof of our main result:

\begin{proof}[Proof of Theorem~\ref{Th:RadialMainTheorem}]
We divide our proof into two steps.

\textbf{Step 1.}
We claim that, for $\alpha$ satisfying \eqref{Eq:AlphaCondition} and $\beta > 0$ a real number such that $2(\beta +s-\alpha) < n$, 
\begin{equation}
    \label{Eq:IntegralRadialBeta}
    \int_{B_1} f(u) \rho^{-\beta} \d x \leq C
\end{equation}
with a constant $C$ which depends only on $n$, $s$, $\alpha$, $\beta$, $\norm{u}_{L^1(B_1)}$,  $\norm{u}_{L^{\infty}(B_{7/8}\setminus B_{3/8})}$, $\norm{f(u)}_{L^\infty(B_1\setminus B_{1/2})}$ and $\norm{f'(u)}_{L^{\infty}(B_{7/8}\setminus B_{3/8})}$.

To prove the claim, we first multiply $\div (y^a \nabla v) =  0$ by $d_s(\rho^2 + y^2 + \varepsilon)^{-\beta/2}$ and integrate it in the cylinder $B_{1/2}\times (0,1)$. We get
\begin{align*}
0 & = d_s \int_{B_{1/2}\times (0,1)}  \div (y^a \nabla v )(\rho^2 + y^2 + \varepsilon)^{-\beta/2} \dx \d y\\
&= \int_{B_{1/2}} f(u) (\rho^2 +\varepsilon)^{-\beta/2} \dx
+ d_s \int_{B_{1/2}} v_y(\rho, 1) (\rho^2 +1 + \varepsilon)^{-\beta/2} \dx \\
&\quad \quad + d_s \int_0^1 y^a v_\rho(1/2,y)(1/4 + y^2 + \varepsilon)^{-\beta/2} \d y \\ 
&\quad \quad + d_s \beta  \int_{B_{1/2}\times (0,1)}  y^a (\rho^2 + y^2 + \varepsilon)^{-\beta/2-1} (\rho v_\rho + y v_y)\dx \d y\,.
\end{align*}
We rewrite this as
\begin{equation}
\label{Eq:IntegralDecomposition}
	\int_{B_{1/2}} f(u)(\rho^2 +\varepsilon)^{-\beta/2} \dx = -I_1 - I_2 + I_3\,,
\end{equation}
where
$$
I_1= d_s \int_{B_{1/2}} v_y(\rho,1) (\rho^2 + 1 + \varepsilon)^{-\beta/2} \dx  \,, 
$$
$$
I_2 = d_s \int_0^1 y^a v_\rho(1/2,y)(1/4 + y^2 + \varepsilon)^{-\beta/2} \d y \,,
$$
and
$$
I_3 = -d_s \beta  \int_0^1\int_{B_{1/2}}   y^a (\rho^2 + y^2 + \varepsilon)^{-\beta/2-1} (\rho v_\rho + y v_y)\dx \d y  \,.
$$

We decompose $I_3 = I_\rho + I_y$, where
$$
I_\rho = -d_s \beta  \int_0^1\int_{B_{1/2}}  y^a (\rho^2 + y^2 + \varepsilon)^{-\beta/2-1} \rho v_\rho\dx \d y 
$$
and
$$
I_y = -d_s \beta  \int_0^1\int_{B_{1/2}}  y^a (\rho^2 + y^2 + \varepsilon)^{-\beta/2-1}  y v_y\dx \d y \,.
$$
We can estimate $ \lim_{\varepsilon \to 0} I_y$ following the arguments of Lemma~\ref{Lemma:Identityv_yMagicConstant} to obtain
\begin{equation}
	\label{Eq:BoundI_y}
	\lim_{\varepsilon \to 0} I_y \leq A_{n,s,\beta} \int_{B_1} f(u) |x|^{-\beta} \d x\,,
\end{equation}
where $A_{n,s,\beta}$ is the constant appearing in Lemma~\ref{Lemma:Identityv_yMagicConstant}. Recall that by this lemma,  $0<A_{n,s,\beta}<1$. Indeed, we have that 
\begin{align*}
\lim_{\varepsilon \to 0} I_y &= \int_{\R^n}  \fraclaplacian u(z)\bpar{ \beta \Gamma_{n,s} \int_0^1\int_{B_{1/2}} \dfrac{y^{a+2}}{(|x-z|^2 + y^2)^{\frac{n+2-2s}{2}} (|x|^2 + y^2)^{\beta/2 + 1} } \d x \d y} \!\! \d z \\
&\leq 
\int_{B_1}  f(u) \bpar{ \beta \Gamma_{n,s}  \int_0^1\int_{B_{1/2}} \dfrac{y^{a+2}}{(|x-z|^2 + y^2)^{\frac{n+2-2s}{2}} (|x|^2 + y^2 )^{\beta/2 + 1} } \d x \d y} \d z \\
&\leq 
A_{n,s,\beta} \int_{B_1} f(u) |x|^{-\beta} \d x\,.
\end{align*}
Here we have used the Poisson formula for $v_y$ 
in the first equality. Then, we have used that $\fraclaplacian u < 0$ in $\R^n \setminus B_1$ and also the equation $\fraclaplacian u = f(u)$ in $B_1$. The last inequality is easily deduced using exactly the same arguments that are described in the proof of  Lemma~\ref{Lemma:Identityv_yMagicConstant}.

From \eqref{Eq:IntegralDecomposition} and \eqref{Eq:BoundI_y}, we deduce that
\begin{align*}
 \int_{B_1} f(u) \rho^{-\beta} \d x &= \int_{B_1\setminus B_{1/2}} f(u) \rho^{-\beta} \d x
 + \int_{B_{1/2}} f(u) \rho^{-\beta} \d x\\
 & \leq C_{n, \beta} \norm{f(u)}_{L^\infty(B_1\setminus B_{1/2})} + \limsup_{\varepsilon\rightarrow 0} \bpar{|I_1| + |I_2| + |I_\rho|} \\
 & \quad \quad + 
 A_{n,s,\beta} \int_{B_1} f(u) \rho^{-\beta} \d x \,.
\end{align*}
Since $u$ is radially decreasing, $f(u) \rho^{-\beta}$ is bounded in $B_1\setminus B_{1/2}$. Thus, we obtain
$$
\bpar{1- A_{n,s,\beta}} \int_{B_1} f(u) \rho^{-\beta} \d x \leq C + \limsup_{\varepsilon\rightarrow 0} \bpar{|I_1| + |I_2| + |I_\rho|} \,,
$$
for a constant $C$ depending only on $n$, $\beta$ and $\norm{f(u)}_{L^\infty(B_1\setminus B_{1/2})} $.
Moreover, thanks to Lemma~\ref{Lemma:Identityv_yMagicConstant}, $1 - A_{n,s,\beta} > 0$ and therefore
$$
\int_{B_1} f(u) \rho^{-\beta} \d x \leq C (1 +\limsup_{\varepsilon\rightarrow 0} \bpar{|I_1| + |I_2| + |I_\rho|}) \,,
$$
with a constant $C$ depending only on $n$, $s$, $\beta$ and $\norm{f(u)}_{L^\infty(B_1\setminus B_{1/2})} $.

Hence, in order to prove our claim, we only need to bound $\limsup_{\varepsilon\rightarrow 0} \bpar{|I_1| + |I_2| + |I_\rho|}$. This is done using some previous results, as follows.

We first bound $|I_1|$. Since this integral is computed over $B_{1/2} \times \{1\}$, we can use the gradient estimate $|\nabla v| \leq C$ (see Proposition~\ref{Prop:GradientEstimate}) with a constant $C$ depending only on $n$, $s$ and $\norm{u}_{L^1(\R^n)}$. 

For $|I_2|$, we just use Proposition~\ref{Prop:HorizontalGradientEstimateRing} to bound $|v_\rho|$ in $\{\rho=1/2\} \times (0,1)$ by a constant depending only on $n$, $s$, $\norm{u}_{L^1(B_1)}$,  $\norm{u}_{L^{\infty}(B_{7/8}\setminus B_{3/8})}$ and $\norm{f'(u)}_{L^{\infty}(B_{7/8}\setminus B_{3/8})}$.

Finally, for $|I_\rho|$, using the Cauchy-Schwarz inequality we get
$$
|I_\rho| \leq d_s \beta \bpar{\int_{B_{1/2}\times (0,1)}  y^a \rho^{-2\alpha} v_\rho^2\dx \d y }^{1/2}\bpar{\int_{B_{1/2}\times (0,1)} \dfrac{y^a \rho^{2+2\alpha}}{ (\rho^2 + y^2 + \varepsilon)^{\beta+2}}  \dx \d y }^{1/2} .
$$
The first of these integrals is bounded by a constant which depends only on $\alpha$ and on the same quantities as the previous one, thanks to Proposition~\ref{Prop:IntegrabilityHalfBall}. To see that the second integral is finite, we notice that
\begin{align*}
    \int_{B_{1/2}\times (0,1)} \dfrac{y^a \rho^{2+2\alpha}}{ (\rho^2 + y^2 + \varepsilon)^{\beta+2}}  \dx \d y 
    & \leq \int_0^\infty \int_{B_{1/2}} \dfrac{y^a \rho^{2\alpha}}{(\rho^2 + y^2)^{\beta+1}} \d x \d y \\
    &= \bigg ( \int_{B_{1/2}} \rho^{a+2\alpha-2\beta - 1} \d x \bigg ) \bigg ( \int_0^\infty \dfrac{t^a}{(1+t)^{\beta + 1}} \d t \bigg ) \,,
\end{align*}
where we have made the change $y=\rho t$. These integrals are finite if $\beta > 0$ and $n>2(\beta + s - \alpha)$ ---recall that $a = 1 - 2s$. Therefore, the claim \eqref{Eq:IntegralRadialBeta} is proved.

\medskip

\textbf{Step 2.} 
We prove point (i) of the statement of the theorem. Thanks to the representation formula for the fractional Laplacian and the fact that $u$ is radially decreasing, it is easy to see that
\begin{equation}
\label{Eq:LinftyNormEstimateProofMainTheorem}
	\norm{u}_{L^{\infty}(B_1)} = u(0) \leq C \int_{B_1} \dfrac{f(u(x))}{|x|^{n-2s}} \d x\,,
\end{equation}
where $C$ is a constant depending only on $n$ and $s$. Indeed, we just use Lemma~\ref{Lemma:Boundu(x)Dirichlet} with a truncation of $f(u)$ (recall that in such lemma $h$ is assumed to be bounded) and then use monotone convergence to deduce \eqref{Eq:LinftyNormEstimateProofMainTheorem}. 
In order to use the claim of Step~1, we take $\beta = n - 2s$ and we must choose $\alpha$ satisfying $2(\beta + s -\alpha)<n$ and $1 \leq \alpha < 1 + \sqrt{n-1}$. Therefore, we require that $n/2 - s < \alpha$ (thus $1 \leq \alpha$ provided that $n\geq 2$) and $\alpha < 1 + \sqrt{n-1}$. Hence, such $\alpha$ exists if and only if $n/2 - s <1 + \sqrt{n-1}$, which is equivalent to
\begin{equation}
    \label{Eq:ConditionOnNRadial}
     2\bpar{s+2-\sqrt{2(s+1)}} < n < 2\bpar{s+2+\sqrt{2(s+1)}}.
\end{equation}
Notice that the lower bound for $n$ is automatically satisfied for $n\geq 2$ and $s\in(0,1)$. Thus, if $n$ satisfies \eqref{Eq:ConditionOnNRadial}, we can take $\alpha$ such that \eqref{Eq:IntegralRadialBeta} holds for $\beta = n - 2s$. Therefore, by \eqref{Eq:LinftyNormEstimateProofMainTheorem} and Step~1, we obtain
$$
\norm{u}_{L^{\infty}(B_1)}\leq C
$$
with a constant $C$ depending only on $n$, $s$, $\norm{u}_{L^1(B_1)}$,  $\norm{u}_{L^{\infty}(B_{7/8}\setminus B_{3/8})}$, $\norm{f(u)}_{L^{\infty}(B_1\setminus B_{1/2})}$ and $\norm{f'(u)}_{L^{\infty}(B_{7/8}\setminus B_{3/8})}$.
Next, we replace this $C$ by another constant depending only on $n$, $s$, $f$ and $\norm{u}_{L^1(B_1)}$. To do this, we control the $L^\infty$ norm of $u$ in sets away from the origin by the $L^1$ norm of $u$. Indeed, since $u$ is radially decreasing, we have that
$$
\norm{u}_{L^\infty (B_1\setminus \overline{B_R})} \leq \dfrac{C}{R^n} \norm{u}_{L^1(B_1)}   \quad \textmd{for every } R \in (0,1) \,.
$$

\medskip

Finally, we prove (ii). Assume that $\alpha$ and $\beta$ satisfy the  hypotheses of Step~1. Then, using that $f$ is nondecreasing, that $u$ is radially decreasing, and \eqref{Eq:IntegralRadialBeta}, we have
$$
c_n \rho^{n-\beta}f(u (\rho)) = f(u (\rho)) \int_{B_{2\rho}\setminus B_\rho} \!\! |x|^{-\beta} \d x \leq \! \int_{B_1} f(u) |x|^{-\beta} \d x \leq C \ \  \text{ for } \rho \leq 1/2 .
$$
Therefore,
\begin{equation}
\label{Eq:Boundf(u)beta}
	f(u (\rho))\leq  C\rho^{\beta-n} \ \ \text{ for } 0< \rho \leq 1\,,
\end{equation}
with a constant $C$ depending only on $n$, $s$, $\norm{u}_{L^1(B_1)}$,  $\norm{u}_{L^{\infty}(B_{7/8}\setminus B_{3/8})}$, $\norm{f(u)}_{L^{\infty}(B_1\setminus B_{1/2})}$ and $\norm{f'(u)}_{L^{\infty}(B_{7/8}\setminus B_{3/8})}$.

Assume additionally that $\beta < n - 2s$. Using Lemma~\ref{Lemma:Boundu(x)Dirichlet} and \eqref{Eq:Boundf(u)beta}, we obtain
$$
u (x) \leq \dfrac{C}{|x|^{n-\beta - 2s}}\ \ \text{ for all } x \in B_1\,.
$$
From the restrictions on $\alpha$ and $\beta$ that we assumed, we conclude that for every $\mu$ with $\mu > n/2 - s - 1 - \sqrt{n-1}$, we have
$$
u (x) \leq \dfrac{C}{|x|^{\mu}}\ \ \text{ for all } x \in B_1\,,
$$
for a constant $C$ depending only on $n$, $s$, $\mu$, $\norm{u}_{L^1(B_1)}$,  $\norm{u}_{L^{\infty}(B_{7/8}\setminus B_{3/8})}$, $\norm{f(u)}_{L^{\infty}(B_1\setminus B_{1/2})}$ and $\norm{f'(u)}_{L^{\infty}(B_{7/8}\setminus B_{3/8})}$.
As before, using that $u$ is radially decreasing we can deduce the same estimate but with a constant $C$ depending only on  $n$, $s$, $\mu$, $f$ and $\norm{u}_{L^1(B_1)}$. 
\end{proof}

\begin{remark}
\label{Remark:MissingEstimateCapellaEtAl}
In \cite{CapellaEtAl} there is a mistake in the proof of their analogous theorem (Theorem~1.6 there). The authors state that the integral $I_2$ can be controlled using an estimate that only holds for $y$ away from $\{y = 0\}$. Since $I_2$ is an integral up to $\{y = 0\}$, a bound for $I_2$ requires an additional argument. As we show in our proof, the proper way to bound it is by using Proposition~\ref{Prop:HorizontalGradientEstimateRing}, which is valid also for the spectral fractional Laplacian (see Remark~\ref{Remark:EstimateValidForBothOperators}).
\end{remark}

We conclude by applying the previous result to show the boundedness of the extremal solution $u^*$.

\begin{proof}[Proof of Theorem~\ref{Th:RadialMainTheoremExtremal}]
	First, note that the estimate given in point (i) of  Theorem~\ref{Th:RadialMainTheorem} is valid for the classical stable solutions $u_\lambda$ for $\lambda < \lambda^*$. This is because, obviously, $u_\lambda \in H^s(\R^n)$ and, since $u_\lambda$ are bounded and positive, they are also radially decreasing (see Proposition~\ref{Prop:RadialSymmetryAndMonotonicity}). Therefore, by Theorem~\ref{Th:RadialMainTheorem}, we have
	$$
	\norm{u_\lambda}_{L^\infty (B_1)} \leq C
	$$
	for some constant $C$ depending only on $n$, $s$, $f$ and $\norm{u_\lambda}_{L^1 (\R^n)}$. Note that all these quantities are uniform in $\lambda < \lambda^*$ (see Remark~\ref{Remark:UniformEstimatesInLambda}). Hence, by letting $\lambda \to \lambda^*$ we conclude
	$$
	\norm{u^*}_{L^\infty (B_1)} \leq C
	$$
	for some constant $C$ depending only on $n$, $s$, $f$ and $\norm{u^*}_{L^1 (\R^n)}$.
	
	The way to deduce point (ii) from  Theorem~\ref{Th:RadialMainTheorem} is completely analogous.
\end{proof}

\appendix

\section{An alternative proof of the result of Ros-Oton and Serra for the exponential nonlinearity}
\label{Sec:GelfandProblem}

In this appendix, we present an alternative proof of the following result of X.~Ros-Oton and J.~Serra. In contrast with theirs, our proof uses the extension problem.

\begin{proposition}[Proposition 3.1 in \cite{RosOtonSerra-Extremal}]
\label{Prop:GelfandProblem10s}
Let $\Omega$ be a smooth and bounded domain in $\R^n$, and let $u^*$ be the extremal solution of \eqref{Eq:ProblemLambda}. Assume that $f(u) = e^u$ and $n < 10 s$. Then, $u^*$ is bounded.
\end{proposition}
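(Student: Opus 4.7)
The plan is to adapt the classical Crandall--Rabinowitz argument for the Gelfand problem to the nonlocal setting via the extension problem, using as the key stability test function $\xi = e^{\alpha v}-1$, where $v$ denotes the $s$-harmonic extension of $u$. I work with the minimal solutions $u = u_\lambda$ for $\lambda < \lambda^*$, which are bounded and smooth, derive an estimate uniform in $\lambda$, and pass to the limit $\lambda\uparrow\lambda^*$ by monotone convergence.

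Fix $\alpha\in(0,2)$. Since $u$ vanishes outside $\Omega$, the trace of $\xi = e^{\alpha v}-1$ on $\R^n$ is $e^{\alpha u}-1$, which is compactly supported in $\Omega$. Inserting $\xi$ into the stability condition \eqref{Eq:StabilityConditionExtension} ---with appropriate cutoffs in $y$, justified by the decay of $v$ and $\nabla v$ at infinity from Lemma~\ref{Lemma:Estimate for v_i and v_y} and Proposition~\ref{Prop:GradientEstimate}--- yields
$$
\lambda\int_\Omega e^u(e^{\alpha u}-1)^2\dx \leq d_s\alpha^2\int_{\R^{n+1}_+} y^a e^{2\alpha v}|\nabla v|^2 \d x \d y.
$$
On the other hand, multiplying $\div(y^a\nabla v)=0$ by $e^{2\alpha v}-1$ and integrating by parts on $\R^{n+1}_+$, the boundary term at $y=0$ equals $(1/d_s)\int_\Omega \lambda e^u(e^{2\alpha u}-1)\dx$, while the boundary terms at infinity vanish by the above decay, giving the identity
$$
2\alpha\int_{\R^{n+1}_+} y^a e^{2\alpha v}|\nabla v|^2 \d x \d y = \dfrac{\lambda}{d_s}\int_\Omega e^u(e^{2\alpha u}-1)\dx.
$$

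Combining these two relations and expanding the squares produces the algebraic inequality
$$
(1-\alpha/2)\int_\Omega e^{(2\alpha+1)u_\lambda}\dx - 2\int_\Omega e^{(\alpha+1)u_\lambda}\dx + (1+\alpha/2)\int_\Omega e^{u_\lambda}\dx \leq 0,
$$
valid for every $\alpha\in(0,2)$. Writing $e^{(\alpha+1)u} = e^{(2\alpha+1)u/2}\cdot e^{u/2}$ and using Cauchy--Schwarz, the above becomes a quadratic inequality in $t = \bpar{\int e^{(2\alpha+1)u_\lambda} / \int e^{u_\lambda}}^{1/2}$ whose roots are $1$ and $(2+\alpha)/(2-\alpha)$, yielding
$$
\int_\Omega e^{(2\alpha+1)u_\lambda}\dx \leq \left(\dfrac{2+\alpha}{2-\alpha}\right)^2\int_\Omega e^{u_\lambda}\dx.
$$
The $L^1$ norm $\int_\Omega e^{u_\lambda}\dx$ is uniformly bounded in $\lambda < \lambda^*$ by a standard argument: testing the weak formulation against the torsion function of $\fraclaplacian$ in $\Omega$ and combining with the uniform $L^1$ bound on $u_\lambda$ established in \cite{RosOtonSerra-Extremal}.

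Consequently, $f(u_\lambda) = e^{u_\lambda}$ is uniformly bounded in $L^p(\Omega)$ for every $p < 5$. By the $L^p$-to-$L^\infty$ regularity theory for the Dirichlet problem for $\fraclaplacian$, this gives a uniform $L^\infty$ bound on $u_\lambda$ whenever $2\alpha+1 > n/(2s)$ for some admissible $\alpha\in(0,2)$, which is possible precisely when $n<10s$. Passing to the limit $\lambda\uparrow\lambda^*$ concludes $u^*\in L^\infty(\Omega)$. The main obstacle is the rigorous justification of the two integrations by parts above, since neither $e^{\alpha v}-1$ nor $e^{2\alpha v}-1$ is compactly supported in $\overline{\R^{n+1}_+}$; this requires a careful cutoff scheme in the $y$-variable and controlling the boundary fluxes at infinity, for which the pointwise decay estimates of Section~\ref{Sec:PreliminaryResults} (together with the boundedness of $u_\lambda$) are the key ingredients.
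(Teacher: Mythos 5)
Your overall strategy coincides with the paper's: the same test function $\xi=e^{\alpha u_\lambda}-1$ with the (non-$s$-harmonic) extension $e^{\alpha v_\lambda}-1$, the same integration by parts against $e^{2\alpha v_\lambda}-1$ using $\div(y^a\nabla v_\lambda)=0$, the same algebraic inequality in the three exponential integrals, and the same conclusion via Lemma~\ref{Lemma:LpEmbeddings} (choosing $2\alpha+1>n/(2s)$ with $\alpha<2$, possible exactly when $n<10s$) and monotone convergence. The one place where you deviate is how you extract the uniform $L^{2\alpha+1}$ bound on $e^{u_\lambda}$ from that inequality, and there your argument has a gap.

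You use Cauchy--Schwarz to reduce to the quadratic in $t=\bigl(\int e^{(2\alpha+1)u_\lambda}/\int e^{u_\lambda}\bigr)^{1/2}$, which is fine, but the resulting bound $\int e^{(2\alpha+1)u_\lambda}\le \bigl(\frac{2+\alpha}{2-\alpha}\bigr)^2\int e^{u_\lambda}$ is only useful if $\int_\Omega e^{u_\lambda}\dx$ is bounded uniformly in $\lambda$, and the justification you give for this does not work as stated. Testing the weak formulation \eqref{Eq:DefinitionWeakSolution} with the torsion function $\zeta$ (which behaves like $\delta^s$ near $\partial\Omega$) together with the uniform $L^1$ bound on $u_\lambda$ only yields $\lambda\int_\Omega e^{u_\lambda}\,\delta^s\dx\le C$, i.e.\ a \emph{weighted} $L^1$ bound that degenerates at the boundary; it does not control $\int_\Omega e^{u_\lambda}\dx$, and a priori one does not even know $e^{u^*}\in L^1(\Omega)$. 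The gap is easy to repair, and the repair essentially reproduces the paper's argument: either apply H\"older directly to the middle term, $\int_\Omega e^{(\alpha+1)u_\lambda}\dx\le |\Omega|^{\frac{\alpha}{2\alpha+1}}\bigl(\int_\Omega e^{(2\alpha+1)u_\lambda}\dx\bigr)^{\frac{\alpha+1}{2\alpha+1}}$, which combined with $(1-\alpha/2)\int e^{(2\alpha+1)u_\lambda}\le 2\int e^{(\alpha+1)u_\lambda}$ gives the uniform bound depending only on $\alpha$ and $|\Omega|$; or keep your Cauchy--Schwarz step and additionally bound $\int_\Omega e^{u_\lambda}\dx\le |\Omega|^{\frac{2\alpha}{2\alpha+1}}\bigl(\int_\Omega e^{(2\alpha+1)u_\lambda}\dx\bigr)^{\frac{1}{2\alpha+1}}$, which closes the inequality in terms of $|\Omega|$ alone. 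With that correction the rest of your proof (including your remark that the integrations by parts need cutoffs in $y$ justified by the decay estimates of Section~\ref{Sec:PreliminaryResults}, which is consistent with what the paper does implicitly) goes through.
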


The procedure used to prove the boundedness of the extremal solution is, as usual, to deduce an $L^{\infty}$ estimate for $u_\lambda$ uniform in $\lambda < \lambda^*$. Then, the result follows from monotone convergence. To prove the uniform bound for $u_\lambda$, we proceed as in the classical proof of Crandall-Rabinowitz \cite{CrandallRabinowitz}: we take $\xi = e^{\alpha u_\lambda} - 1$ in the stability condition to obtain a uniform $L^{p}$ bound for $e^{u_\lambda}$ for certain values of $p$. This, combined with the following result, will lead to the desired $L^{\infty}$ estimate.

\begin{lemma}[\cite{RosOtonSerra-Extremal}]
\label{Lemma:LpEmbeddings}

Let $\Omega \subset \R^n$ be  a  bounded $C^{1,1}$ domain,
$s\in (0,1)$, $n > 2s$, $h\in C(\overline{\Omega})$, and $u$
be the solution of 
$$
\beqc{\PDEsystem}
    \fraclaplacian u &= & h & \textrm{in } \Omega\,,\\
    u &= &0 &  \textrm{in } \R^n\setminus \Omega\,.
\eeqc
$$
Let $\frac{n}{2s} < p < \infty$. Then, there exists a constant $C$, depending only on $n$, $s$, $p$ and $|\Omega|$, such that
$$
\norm{u}_{C^{\beta}(\R^n)} \leq C \norm{h}_{L^{p}(\Omega)}\,, \quad \ \textrm{ where } \ \beta =  \min \left \{ s, 2s - \dfrac{n}{p} \right \} \,.
$$
\end{lemma}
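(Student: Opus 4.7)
The plan is to decompose $u = w + v$, where $w$ is the Riesz potential solving the equation in the whole space and $v$ is an $s$-harmonic correction. Extending $h$ by zero outside $\Omega$, set
\[
w(x) := c_{n,s} \int_{\R^n} \frac{h(z)}{|x - z|^{n-2s}}\,\d z,
\]
so that $\fraclaplacian w = h$ in all of $\R^n$. Then $v := u - w$ is $s$-harmonic in $\Omega$ and coincides with $-w$ on $\R^n \setminus \Omega$. This separation isolates the interior regularity (carried entirely by the Riesz potential $w$) from the boundary behavior (carried by the $s$-harmonic $v$), and both $L^\infty$ and H\"older bounds will be derived for each piece separately and then summed.

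For $w$, both estimates come from standard Riesz potential theory. By H\"older's inequality with conjugate exponent $p'$,
\[
|w(x)| \leq C\,\norm{h}_{L^p(\Omega)} \left(\int_\Omega |x-z|^{-(n-2s)p'}\,\d z\right)^{1/p'},
\]
which is uniformly bounded in $x$ since the hypothesis $p > n/(2s)$ is equivalent to $(n-2s)p' < n$; this gives $\norm{w}_{L^\infty(\R^n)} \leq C\norm{h}_{L^p}$ with $C = C(n,s,p,|\Omega|)$. For the H\"older seminorm, fix $x \neq y$, let $r = |x-y|$, and split the integral $w(x) - w(y)$ into the near region $\{|x-z| < 2r\}$, where H\"older applied separately to the two Riesz integrals yields a bound of order $r^{2s-n/p}\norm{h}_{L^p}$, and the far region $\{|x-z| \geq 2r\}$, where the mean value theorem applied to the kernel $|\cdot|^{2s-n}$ followed by H\"older gives the same bound. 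This yields $[w]_{C^\beta(\R^n)} \leq C\norm{h}_{L^p}$ for $\beta = \min\{s, 2s-n/p\} \leq s < 1$.

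For $v$, the maximum principle for the Dirichlet problem gives $\norm{v}_{L^\infty(\R^n)} \leq \norm{w}_{L^\infty(\R^n)} \leq C\norm{h}_{L^p}$. The $C^\beta$ bound for $v$ follows from a regularity theorem for the Dirichlet problem $\fraclaplacian v = 0$ in $\Omega$ with H\"older exterior data $g = -w$: in a $C^{1,1}$ domain, the solution satisfies $\norm{v}_{C^{\min\{\gamma,s\}}(\R^n)} \leq C(\norm{g}_{L^\infty} + [g]_{C^\gamma(\R^n\setminus\Omega)})$ for any $\gamma \in (0,1)$. Taking $\gamma = \beta$ and inserting the bound on $w$ yields $\norm{v}_{C^\beta(\R^n)} \leq C\norm{h}_{L^p}$. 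Summing the estimates for $w$ and $v$ gives the conclusion.

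The main obstacle is the boundary regularity step for $v$: the cap $\beta \leq s$ is sharp and reflects the $\delta^s$ boundary behavior of the Green's function for the fractional Dirichlet problem, while $\beta \leq 2s - n/p$ is the sharp interior regularity of the Riesz potential. To reduce the $v$-estimate to a standard form one can subtract a bounded, compactly supported extension $\widetilde g$ of $g = -w|_{\R^n\setminus\Omega}$ so that $v - \widetilde g$ solves a Dirichlet problem with zero exterior data and a right-hand side $-\fraclaplacian \widetilde g$ whose $L^\infty$ norm is controlled by $\norm{w}_{C^\beta(\R^n)}$; the Ros-Oton--Serra boundary $C^s$ estimate \cite{RosOtonSerra-Regularity} then closes the argument. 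The $C^{1,1}$ hypothesis on $\partial\Omega$ enters precisely at this point, both to straighten the boundary and to invoke the boundary regularity theorem with the quantitative constants needed to keep the dependence on $n$, $s$, $p$, $|\Omega|$ explicit.
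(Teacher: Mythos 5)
You should first note that the paper itself does not prove this lemma: it is quoted from \cite{RosOtonSerra-Extremal} and used as a black box, so the comparison can only be with your argument on its own merits.

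Your treatment of the Riesz-potential part $w$ is essentially correct: the $L^\infty$ bound via H\"older and the near/far splitting for the H\"older seminorm are the standard computations (only routine adjustments are needed in the borderline regime $2s-n/p\geq 1$, where the far-region integral is dominated by the bounded support of $h$ rather than by the lower limit). The genuine gap is in the estimate for the $s$-harmonic correction $v$. The regularity statement you invoke --- $\norm{v}_{C^{\min\{\gamma,s\}}(\R^n)}\leq C\bpar{\norm{g}_{L^\infty}+[g]_{C^\gamma(\R^n\setminus\Omega)}}$ for the Dirichlet problem with merely $C^\gamma$ exterior data, $\gamma\in(0,1)$ arbitrary --- is precisely the nontrivial boundary estimate that has to be proved here; it is not the estimate available in the references you lean on (\cite{RosOtonSerra-Regularity} gives $\norm{u}_{C^s(\R^n)}\leq C\norm{h}_{L^\infty(\Omega)}$ for \emph{zero} exterior data and \emph{bounded} right-hand side). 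Moreover, your proposed reduction to that known estimate fails: if $\widetilde g$ is an extension of $g=-w|_{\R^n\setminus\Omega}$ having only the $C^\beta$ regularity that $w$ has, with $\beta\leq s<2s$, then $\fraclaplacian\widetilde g$ is in general not in $L^\infty(\Omega)$ --- it need not even be defined pointwise --- so its $L^\infty$ norm cannot be ``controlled by $\norm{w}_{C^\beta(\R^n)}$''. Pointwise evaluation of $\fraclaplacian$ requires $C^{2s+\varepsilon}$ regularity; for data behaving like $\dist(x,\partial\Omega)^{\beta}$ near a boundary point one gets $\fraclaplacian\widetilde g\sim \dist(\cdot,\partial\Omega)^{\beta-2s}$, which blows up exactly where the difficulty of the lemma sits. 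So the step that carries the cap $\beta\leq s$ is circular as written: you assume the boundary regularity theory you are supposed to establish, and the one concrete mechanism you offer to justify it does not work.

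To close the argument along your lines one needs a genuinely quantitative tool for $s$-harmonic functions with H\"older exterior data in $C^{1,1}$ domains --- for instance Poisson-kernel or Green-function bounds (which is in the spirit of how \cite{RosOtonSerra-Extremal} obtains the estimate), or an explicit barrier of the form $C\bpar{|x-x_0|^{\gamma}+\delta^s}$ at boundary points $x_0$, proved by direct computation of $\fraclaplacian$ on model functions. Until such an ingredient is supplied with constants depending only on the admissible quantities, the $v$-part, and hence the lemma, is not proved.
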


With this bound in hand, we can proceed with the alternative proof of the result on the boundedness of $u^*$ in the case $f(u)=e^u$.

\begin{proof}[Proof of Proposition~\ref{Prop:GelfandProblem10s}]
Let $\alpha$ be a positive real number that will be chosen later. Let $u_\lambda$ be the minimal stable solution of \eqref{Eq:ProblemLambda} for $\lambda < \lambda^*$. Take $\xi = e^{\alpha u_\lambda} - 1$, which is $0$ in $\R^n\setminus \Omega$, in the stability condition \eqref{Eq:StabilityConditionExtension} to obtain
$$
\lambda \int_{\Omega} e^{u_\lambda} (e^{\alpha u_\lambda} - 1)^2 \d x \leq d_s \int_{\R_+^{n+1}} y^a \alpha^2 e^{2\alpha v_\lambda} |\nabla v_\lambda |^2 \d x \d y \,,
$$
where $v_\lambda$ denotes the $s$-harmonic extension of $u_\lambda$. Note that we have taken $e^{\alpha v_\lambda} - 1$ as the extension of $\xi$ in $\R^{n+1}_+$. Then, integrating by parts we compute
\begin{align*}
d_s \int_{\R_+^{n+1}} y^a \alpha^2 e^{2\alpha v_\lambda} |\nabla v_\lambda |^2 \d x \d y &= d_s \dfrac{\alpha}{2}  \int_{\R_+^{n+1}} y^a \nabla v_\lambda \cdot \nabla (e^{2\alpha v_\lambda} - 1) \d x \d y \\ &=\dfrac{\alpha}{2} \int_{\Omega} \lambda e^{u_\lambda} (e^{2\alpha u_\lambda} - 1) \d x
\end{align*}
(recall that $\div(y^a \nabla v_\lambda ) = 0$ in $\R_+^{n+1}$) and hence
$$
\int_{\Omega} e^{u_\lambda} (e^{2\alpha u_\lambda} - 2 e^{\alpha u_\lambda} + 1) \d x \leq \dfrac{\alpha}{2} \int_{\Omega} e^{u_\lambda} (e^{2\alpha u_\lambda} - 1) \d x\,.
$$
This leads to
\begin{equation}
\label{Eq:ExponentialInequalitiesAlpha}
\bpar{1 - \dfrac{\alpha}{2}}
\int_{\Omega} e^{(2\alpha + 1)u_\lambda} \d x
- 2
\int_{\Omega} e^{(\alpha + 1)u_\lambda} \d x
+ \bpar{1 + \dfrac{\alpha}{2}}
\int_{\Omega} e^{u_\lambda} \d x \leq 0\,.
\end{equation}

Now, using Hölder inequality we have
$$
\int_{\Omega} e^{(\alpha + 1)u_\lambda} \d x \leq C \bpar{\int_{\Omega} e^{(2\alpha + 1)u_\lambda} \d x}^{\frac{\alpha + 1}{2 \alpha + 1}}
$$
for a constant $C$ depending only on $\alpha$ and $|\Omega|$. Therefore, from \eqref{Eq:ExponentialInequalitiesAlpha} we see that for each $\alpha < 2$, we have
\begin{equation}
\label{Eq:ExponentialUniformLpBound}
\norm{e^{u_\lambda}}_{L^{2\alpha + 1}(\Omega)} \leq C
\end{equation}
for a constant $C$ depending only on $\alpha$ and $|\Omega|$.

Finally, if $n < 10s$,
we can choose $\alpha  < 2$ such that  $\frac{n}{2s} < 2 \alpha + 1  < 5$. Then, taking $p = 2\alpha+ 1$ in Lemma~\ref{Lemma:LpEmbeddings} and using \eqref{Eq:ExponentialUniformLpBound} we obtain
$$
\norm{u_\lambda}_{L^{\infty}(\Omega)} \leq C \lambda \norm{e^{u_\lambda}}_{L^{2\alpha + 1}(\Omega)} \leq C\,,
$$
for a constant $C$ depending only on $n$, $s$ and $\Omega$ (and hence independent of $\lambda$). By monotone convergence, letting $\lambda \to \lambda^*$ we conclude that $u^*$ is bounded.
\end{proof}

\begin{remark}
In the previous proof, we have taken $\xi = e^{\alpha v_\lambda} - 1$ in the stability condition, where $v_\lambda$ is the $s$-harmonic extension of $u_\lambda$. Nevertheless, the inequality obtained with this choice of the extension is not sharp, since $e^{\alpha v_\lambda} - 1$ is not the $s$-harmonic extension of $\xi = e^{\alpha u_\lambda} - 1$. This choice simplifies a lot the computations but makes us wonder if there could be a smarter choice of the extension leading to a better result.
\end{remark}

\section*{Acknowledgements}

The author thanks Xavier Cabré for his guidance and useful discussions on the topic of this paper.

\bibliographystyle{amsplain}
\bibliography{biblio}

\end{document}